\newtheorem{lemma}{Lemma}[section]
\newtheorem{theorem}{Theorem}[section]
\newtheorem{remark}{Remark}[section]
\numberwithin{equation}{section}
\newcommand{\dis}{\displaystyle}
\newcommand{\R}{\mathbb{R}}
\renewcommand{\S}{\mathbb{S}}
\newcommand{\T}{\mathbb{T}}
\newcommand{\CE}{\mathcal{E}}
\newcommand{\CH}{\mathcal{H}}
\newcommand{\para}{\shortparallel}
\newcommand{\ep}{\epsilon}
\newcommand{\na}{\nabla}
\newcommand{\be}{\beta}
\newcommand{\ga}{\gamma}
\newcommand{\om}{\omega}
\newcommand{\la}{\lambda}
\newcommand{\de}{\delta}
\newcommand{\pa}{\partial}
\newcommand{\eps}{\epsilon}
\newcommand{\Ga}{\Gamma}
\begin{document}
\title[Quantum Boltzmann equation for soft potentials]{Existence and uniqueness of solutions to the quantum Boltzmann equation for soft potentials}
\author{Zongguang Li}
\address[ZGL]{Department of Mathematics, The Chinese University of Hong Kong,
Shatin, Hong Kong, P.R.~China}
\email{zgli@math.cuhk.edu.hk}

\begin{abstract}
In this paper we consider a modified quantum  Boltzmann equation with the quantum effect measured by a continuous parameter $\delta$ that can decrease from $\delta=1$ for the Fermi-Dirac particles to $\delta=0$ for the classical particles. In case of soft potentials, for the corresponding Cauchy problem in the whole space or in the torus, we establish the global existence and uniqueness of non-negative mild solutions in the function space $L^{\infty}_{T}L^{\infty}_{v,x}\cap L^{\infty}_{T}L^{\infty}_{x}L^1_v$ with small defect mass, energy and entropy but allowed to have large amplitude up to the possibly maximum upper bound $F(t,x,v)\leq \frac{1}{\delta}$. The key point is that the obtained estimates are uniform in the quantum parameter $0< \delta\leq1$. In particular, as $\delta\to 0$ we can recover the results on the classical Boltzmann equation around global Maxwellians for which solutions may have arbitrarily large oscillations.
\end{abstract}

\date{\today}

\subjclass[2020]{35Q20, 35Q40; 35B20,	35B45}

\keywords{Quantum Boltzmann equation, large amplitude solutions, existence, uniform estimates}
\maketitle
\thispagestyle{empty}

\section{Introduction}
We consider the following Cauchy problem on the modified quantum Boltzmann equation
 \begin{eqnarray}\label{QBE}
&\dis \pa_tF+v\cdot \na_x F=\mathcal{C}_\de(F),   \quad &\dis F(0,x,v)=F_0(x,v),
\end{eqnarray}
where $F(t,x,v)\geq0$ is an unknown velocity  distribution function of particles with position $x\in \Omega=\R^3$ or $\T^3$ and velocity $v\in \R^3$ at time $t> 0$ and initial data $F_0(x,x)$ is given. The collision operator $\mathcal{C}_\de$ takes the form of
\begin{align*}
\left(\mathcal{C}_\de(F)\right)(v)=\int_{\R^3}\int_{\S^2}B(v-u,\theta)&\left[ F(u')F(v')\left(1-\de F(u) \right)\left(1-\de F(v) \right)\right. \notag\\
 & \left. -F(u)F(v)\left(1-\de F(u') \right) \left(1-\de F(v') \right)\right]d\omega du,
\end{align*}
where $0\leq \de \leq 1$ is a continuous parameter denoting the quantum effect. For $\de=0$, the equation \eqref{QBE} is the classical Boltzmann equation, while for $\de=1$, the equation corresponds to the Boltzmann equation for Fermi-Dirac particles. In this paper we consider the continuous parameter $\de$ taking values in the interval $[0,1]$. Moreover, we consider only the soft potentials under the Grad's angular cutoff assumption. Therefore, the collision kernel $B(v-u,\theta)$ satisfies 
\begin{equation*}%\label{CollisionKernel}
B(v-u,\theta)=|v-u|^\gamma b(\theta),
\end{equation*}
where $-3<\ga<0$, $0\leq b(\theta) \leq C|\cos \theta|$ and $\cos \theta=\frac{(v-u)\cdot \omega}{|v-u|}$. The post-collision velocities $v'$ and $u'$ are defined by
\begin{align}\label{velocity}
\begin{split}
v'=v-\left[(v-u)\cdot \omega \right]\omega, \quad &u'=u+\left[(v-u)\cdot \omega \right]\omega,\\
u'+v'=u+v,\quad |u'|^2&+|v'|^2=|u|^2+|v|^2.
\end{split}
\end{align}
Let the equilibrium state $\mu_{\de,\rho}$ be denoted by
\begin{align}\label{Defmuderho}
\mu_{\de,\rho}(v)=\frac{1}{\de+\rho e^\frac{|v|^2}{2}},
\end{align}
where $\rho>0$ is a constant.
Moreover, if $F(t,x,v)$ is a solution to the modified quantum Boltzmann equation \eqref{QBE} with the initial datum $F_0(x,v)$, the following conservation laws of the initial defect mass, momentum, energy and the defect entropy inequality of $F(t,x,v)$ hold,
\begin{align} 
&\int_{\Omega}\int_{\R^3} \{F(t,x,v)-\mu_{\de,\rho}(v) \}dvdx=\int_{\Omega}\int_{\R^3} \{F_0(x,v)-\mu_{\de,\rho}(v) \}dvdx:=M_0, \label{M}\\
&\int_{\Omega}\int_{\R^3} v\{F(t,x,v)-\mu_{\de,\rho}(v) \}dvdx=\int_{\Omega}\int_{\R^3} v\{F_0(x,v)-\mu_{\de,\rho}(v) \}dvdx:=J_0,\notag%\label{J}
\\
&\int_{\Omega}\int_{\R^3} |v|^2\{F(t,x,v)-\mu_{\de,\rho}(v) \}dvdx=\int_{\Omega}\int_{\R^3} |v|^2\{F_0(x,v)-\mu_{\de,\rho}(v) \}dvdx:=E_0,\label{E}
\end{align}
and 
\begin{align}
\CH_{\de,\rho}(F(t)):=&\int_{\Omega}\int_{\R^3} \left\{F(t,x,v)\log{F}(t,x,v)+\frac{1}{\de}(1-\de F(t,x,v))\log(1-\de{F}(t,x,v))\right.\notag\\
                                & \left.-\mu_{\de,\rho}(v)\log{\mu_{\de,\rho}(v)}-\frac{1}{\de}(1-\de\mu_{\de,\rho}(v))\log(1-\de\mu_{\de,\rho}(v))\right\}dvdx\leq \CH_{\de,\rho}(F(0)).
\label{H}
\end{align}
Furthermore, the functional $\CE_{\de,\rho}(F(t))$ is given by %by proposition CE\geq0
\begin{align}\label{DefCE}
\CE_{\de,\rho}(F(t)):=\CH_{\de,\rho}(F(t))+(\log{\rho})M_0+\frac{1}{2}E_0,  
\end{align}
with the initial datum
$\CE_{\de,\rho}(F_0):=\CE_{\de,\rho}(F(0))$.

The quantum Boltzmann equation \eqref{QBE} with $\delta=1$ is a kinetic model which describes the behavior of rarefied gas in non-equilibrium state for particles satisfying the Pauli exclusion principle. For the spatially homogeneous case, we mention \cite{Lu1,LW}. For the inhomogeneous case, Lu \cite{Lu2,Lu3} obtained the global existence and weak stability of weak solutions for Fermi-Dirac particles with very soft potentials, see also \cite{OW,ZL1,ZL2}. Jiang-Xiong-Zhou \cite{JXZ} and Jiang-Zhou \cite{JZ} studied the incompressible Navier-Stokes-Fourier limit and the compressible Euler and acoustic limits from the quantum Boltzmann equation, respectively. In recent years, more attentions have been paid to study how the equation depends on the continuous quantum parameter. Dolbeault \cite{Dolbeault} gave the existence and uniqueness results of the renormalized solutions and obtained solutions to the Boltzmann equation by passing the limit with respect to the quantum parameter. He-Lu-Pulvirenti \cite{HLP} deduced the convergence to the homogeneous Fokker-Planck-Landau equation from the homogeneous quantum Boltzmann equation. Alonso-Bagland-Desvillettes-Lods \cite{ABDL} obtained uniform estimates in the quantum parameter for the entropy dissipation of the Landau-Fermi-Dirac equation. However, there are still many unknowns in the study of the inhomogeneous  quantum Boltzmann equation when ones take into account the effect of the quantum parameter. 

For the case when $\de=0$, the quantum Boltzmann equation becomes the classical Boltzmann equation and there have been extensive works on global existence and large time behavior of solutions. For instance, DiPerna-Lions \cite{DL} proved the global existence of renormalized solutions for general $L^1_{x,v}$ initial data with finite mass, energy and entropy. In the perturbation framework near global Maxwellians, Grad \cite{Grad} and Ukai \cite{Ukai} developed the spatially inhomogeneous well-posedness theory by the spectral analysis and the bootstrap argument, see also \cite{NI,Shizuta,UY}. For more properties of the linearized operator, interested readers may also refer to Ellis-Pinsky \cite{EP}, Baranger-Mouhot \cite{BM} and the references therein. Another important approach using the energy method through the macro-micro decomposition is established by Liu-Yang-Yu \cite{LYY} and Guo \cite{Guo04} in thec2000s. The case of soft potentials $-3<\ga<0$ for which the collision frequency $\nu(v)\sim(1+|v|)^\ga$ is degenerate in large velocities are more complicated to treat. For $-1<\ga<0$, Ukai-Asano \cite{UA82} and Caflisch \cite{Caflisch1,Caflisch2} independently proved the global existence and large-time behavior of the solutions in the whole space and in torus, respectively. When $\ga$ is in the full range $(-3,0)$, Guo \cite{Guo03} constructed the global classical solutions near global Maxwellians and Strain-Guo \cite{SG1,SG} derived the long-time behavior of solutions, see also a recent work \cite{DD} for the study of spectral gap formulation in soft potentials. 

It is also an interesting topic on how to obtain large amplitude solution with extra smallness assumptions. As $\delta\to 0$ corresponding to the limit to the classical Boltzmann equation, the restriction that $0\leq F_0\leq 1/\de$ is reduced to the only non-negativity condition that $F_0\geq 0$ and thus the situation is relatively easier than the one in case of $0<\delta\leq 1$. In case of $\delta=0$, motivated by the original work Guo \cite{Guo09}, Duan-Huang-Wang-Yang \cite{DHWY} developed an $L^\infty_x L^1_v \cap L^\infty_{x,v}$ approach to obtain the global well-posedness of mild solutions under the condition that both $\CE(F_0)$ and the $L^1_x L^\infty_v$ norm of $(F_0-\mu)/\sqrt{\mu}$ are small. The $L^\infty_{x,v}$ norm of $\langle v\rangle^\beta(F_0-\mu)/\sqrt{\mu}$ is only required to be bounded so that the initial data is allowed to have arbitrary large amplitude around the global Maxwellian. See also \cite{Li} by the author of this paper for an extension to the large amplitude results in $L^p_vL^\infty_TL^\infty_x$ spaces. However, for $0\leq \delta\leq 1$, it remains unclear whether there exists a large amplitude solution to the quantum Boltzmann equation for soft potentials with estimates that can be uniform in the quantum parameter, in particular, whether or not one can recover the result in \cite{DHWY} by passing the limit $\delta\to 0$.

The perturbation function $f=f(t,x,v)$ is defined by 
\begin{align}\label{Perturbedform}
F(t,x,v)=\mu_{\de,\rho}(v)+\sqrt{\bar{{\mu}}_{\de,\rho}} f(t,x,v),
\end{align}
 where $\mu_{\de,\rho}(v)$ is given in \eqref{Defmuderho} and $\sqrt{\bar{{\mu}}_{\de,\rho}} f(t,x,v)=\sqrt{\bar{{\mu}}_{\de,\rho}}(v) f(t,x,v)$ with
\begin{align}\label{Defbarmu}
\sqrt{\bar{{\mu}}_{\de,\rho}}(v)=\sqrt{\mu_{\de,\rho}(v)[1-\de\mu_{\de,\rho}(v)]}=\frac{\sqrt\rho e^{{|v|^2}/{4}}}{\de+\rho e^{{|v|^2}/{2}}}.
\end{align}
For simplicity, we use the above notations throughout the paper.
Substituting \eqref{Perturbedform} into \eqref{QBE}, the perturbed equation can be written as
\begin{align}\label{PQBE}
\pa_tf+v\cdot \na_x f+L_\de f=\Ga_\de (f).
\end{align}
The linear operator $L_\de$ and the nonlinear term $\Ga_\de$ are respectively given by
\begin{align}\label{DefLde}
L_\de=\nu_\de-K_\de,
\end{align}
and
\begin{align}\label{DefGade}
\dis
&\Ga_\de (f)(v)=\frac{1}{\sqrt{\bar{\mu}_{\de,\rho}(v)}}\int_{\R^3}\int_{\S^2}B(v-u,\theta)\left[\sqrt{\bar{\mu}_{\de,\rho}}f(u')\sqrt{\bar{\mu}_{\de,\rho}}f(v')\left(1-\de\mu_{\de,\rho}(v)-\de\mu_{\de,\rho}(u)\right)\right.\notag\\
&\quad -\sqrt{\bar{\mu}_{\de,\rho}}f(u)\sqrt{\bar{\mu}_{\de,\rho}}f(v)\left(1-\de\mu_{\de,\rho}(v')-\de\mu_{\de,\rho}(u')\right)+\de\sqrt{\bar{\mu}_{\de,\rho}}f(v')\sqrt{\bar{\mu}_{\de,\rho}}f(u)\left(\mu_{\de,\rho}(v)-\mu_{\de,\rho}(u')\right)\notag\\
&\quad+\de\sqrt{\bar{\mu}_{\de,\rho}}f(u')\sqrt{\bar{\mu}_{\de,\rho}}f(u)\left(\mu_{\de,\rho}(v)-\mu_{\de,\rho}(v')\right)+\de\sqrt{\bar{\mu}_{\de,\rho}}f(v')\sqrt{\bar{\mu}_{\de,\rho}}f(v)\left(\mu_{\de,\rho}(u)-\mu_{\de,\rho}(u')\right)\notag\\
&\quad+\de\sqrt{\bar{\mu}_{\de,\rho}}f(u')\sqrt{\bar{\mu}_{\de,\rho}}f(v)\left(\mu_{\de,\rho}(u)-\mu_{\de,\rho}(v')\right)+\de\sqrt{\bar{\mu}_{\de,\rho}}f(u)\sqrt{\bar{\mu}_{\de,\rho}}f(v)\sqrt{\bar{\mu}_{\de,\rho}}f(u')\notag\\
&\quad+\de\sqrt{\bar{\mu}_{\de,\rho}}f(u)\sqrt{\bar{\mu}_{\de,\rho}}f(v)\sqrt{\bar{\mu}_{\de,\rho}}f(v')-\de\sqrt{\bar{\mu}_{\de,\rho}}f(u')\sqrt{\bar{\mu}_{\de,\rho}}f(v')\sqrt{\bar{\mu}_{\de,\rho}}f(u)\notag\\
&\quad\left.-\de\sqrt{\bar{\mu}_{\de,\rho}}f(u')\sqrt{\bar{\mu}_{\de,\rho}}f(v')\sqrt{\bar{\mu}_{\de,\rho}}f(v)\right]d\om du,
\end{align}
where
\begin{align}
\nu_\de(v)=&\int_{\R^3}\int_{\S^2}B(v-u,\theta)\notag\\ 
&\quad\left[ \mu_{\de,\rho}(u)-\de\mu_{\de,\rho}(u)\mu_{\de,\rho}(u')-\de\mu_{\de,\rho}(u)\mu_{\de,\rho}(v')+\de\mu_{\de,\rho}(u')\mu_{\de,\rho}(v') \right]d\om du,\label{Defnude}\\
K_{\de}f(v)=&\int_{\R^3}\int_{\S^2}B(v-u,\theta)\notag\\
&\left[\frac{\sqrt{\bar{\mu}_{\de,\rho}(u')}}{\sqrt{\bar{\mu}_{\de,\rho}(v)}}\left[ \mu_{\de,\rho}(v')-\de\mu_{\de,\rho}(v')\mu_{\de,\rho}(u)-\de\mu_{\de,\rho}(v')\mu_{\de,\rho}(v)+\de\mu_{\de,\rho}(u)\mu_{\de,\rho}(v) \right]f(u')\right.\notag\\
&\left.\frac{\sqrt{\bar{\mu}_{\de,\rho}(v')}}{\sqrt{\bar{\mu}_{\de,\rho}(v)}} \left[ \mu_{\de,\rho}(u')-\de\mu_{\de,\rho}(u')\mu_{\de,\rho}(u)-\de\mu_{\de,\rho}(u')\mu_{\de,\rho}(v)+\de\mu_{\de,\rho}(u)\mu_{\de,\rho}(v) \right]f(v')\right.\notag\\
&\left.\frac{\sqrt{\bar{\mu}_{\de,\rho}(u)}}{\sqrt{\bar{\mu}_{\de,\rho}(v)}} \left[ \mu_{\de,\rho}(v)-\de\mu_{\de,\rho}(v)\mu_{\de,\rho}(u')-\de\mu_{\de,\rho}(v)\mu_{\de,\rho}(v')+\de\mu_{\de,\rho}(u')\mu_{\de,\rho}(v') \right]f(u)\right]d\om du\label{DefKde}.
\end{align}
By integrating along the backward trajectory, we yield the mild form of the perturbed equation \eqref{PQBE} as follows:
\begin{align}\label{mildQBE}
\dis f(t,x,v)=&e^{-\nu_\de(v)t}f_0(x-vt,v)+\int_0^t e^{-\nu_\de(v)(t-s)}(K_\de f)(s,x-v(t-s),v)ds \notag\\
&+\int_0^t e^{-\nu_\de(v)(t-s)}\Ga_\de(f)(s,x-v(t-s),v)ds.
\end{align}

For given bounded initial data $f_0=f_0(x,v)$ in $L^{\infty}_{v,x}$, we first construct a local solution in $L^\infty_TL^{\infty}_{v,x}$ space. Then under the smallness assumption for $\sup_{0\leq\de\leq1}\left\{ \CE_{\de,\rho}(F_0)+\|f_0\|_{L^1_xL^\infty_v}\right\}$, we establish the $L^\infty_TL^{\infty}_{v,x}\cap L^{\infty}_{T}L^{\infty}_{x}L^1_v$ estimates to extend the obtained solution to a unique global solution which can be governed by the $L^{\infty}_{v,x}$ bound of the initial datum. We define some notations in different normed spaces for later use. For given functions $f=f(t,x,v)$ and $g=g(x,v)$, we define
\begin{align*}
&\|f\|_{L^{\infty}_{T}L^{\infty}_{v,x}}:=\sup_{t\in [0,T]} \sup_{v \in \R^3} \sup_{x \in \Omega}|f(t,x,v)| ,\\
&\|f\|_{L^{\infty}_{T_0,T}L^{\infty}_{x}L^{1}_{v}}:=\sup_{T_0\leq t \leq T} \sup_{x \in \Omega}\int_{\R^3}|f(t,x,v)|dv ,\\
&\|g\|_{L^{\infty}_{v,x}}:= \sup_{v \in \R^3} \sup_{x \in \Omega}|g(x,v)|,\\
&\|g\|_{L_x^1L^\infty_v}:= \int_{\Omega} \left(\sup_{v \in \R^3}|g(x,v)| \right) dx.
\end{align*}
If $T_0=0$, we write $\|f\|_{L^{\infty}_{T}L^{\infty}_{x}L^{1}_{v}}$ instead of $\|f\|_{L^{\infty}_{T_0,T}L^{\infty}_{x}L^{1}_{v}}$. In this paper, if a constant $C$ depends on $\be_1,\be_2,\cdots,\be_n$, we denote $C=C(\be_1,\be_2,\cdots,\be_n)$ in order to show the dependence clearly. The velocity weight function is given by $w_\beta(v)=(1+|v|)^\be$. 

Now, two main results of this paper are stated below.

\begin{theorem}[Local existence]\label{local}
Let $-3<\ga<0$ and $\be > 6$. Assume $F_0(x,v):=\mu_{\de,\rho}(v)+\sqrt{\bar{\mu}_{\de,\rho}}f_0(x,v)$ with $0\leq F_0\leq 1/\de$ and $\|w_\be f_0\|_{L^\infty_{v,x}}<\infty$. Then there are constants $C_1=C_1(\be,\ga)>0$ independent of $\rho$, $C_{*\rho}>0$ depending continuously on $\rho\in(0,\infty)$, and a positive time %$T_1>0$ where
\begin{align}\label{T_1}
T_1:=\frac{C_1}{C_{*\rho}\left(1+\|w_\be f_0\|_{L^{\infty}_{v,x}}+\|w_\be f_0\|^2_{L^{\infty}_{v,x}}\right)}
\end{align}
such that the Cauchy problem on the quantum Boltzmann equation \eqref{QBE} has a unique mild solution $F(t,x,v)=\mu_{\de,\rho}(v)+\sqrt{\bar{{\mu}}_{\de,\rho}} f(t,x,v)$, $(t,x,v)\in[0,T_1]\times \Omega \times \R^3$, in the sense of \eqref{mildQBE}, satisfying $0\leq F(t,x,v)\leq 1/\de$ and 
\begin{align}\label{LE}
\left\|w_\be f %\frac{F-\mu}{\sqrt \mu}
\right\|_{{L^{\infty}_{T_1}L^{\infty}_{v,x}}}\leq 2\left\|w_\be f_0 %\frac{F_0-\mu}{\sqrt \mu}
\right\|_{L^{\infty}_{v,x}}.
\end{align}
\end{theorem}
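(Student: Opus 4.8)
The plan is to run a Picard iteration on a mild formulation designed so as to propagate the physical bound $0\le F^n\le 1/\de$, and then to close a weighted $L^\infty_{v,x}$ estimate on a short time interval with constants that do not depend on $\de\in(0,1]$.

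\emph{Reformulation and iteration scheme.} Expanding the factors $1-\de F$ in $\mathcal C_\de(F)$ and collecting the terms that carry $F(v)$ and $1-\de F(v)$ respectively, one rewrites \eqref{QBE} as
\[
\pa_t F+v\cdot\na_x F+F\,\widetilde L_\de(F)=L^+_\de(F),\qquad \widetilde L_\de(F)=\de\,L^+_\de(F)+L^-_\de(F),
\]
with $L^+_\de(F)(v)=\int_{\R^3\times\S^2}B\,F(u')F(v')(1-\de F(u))\,d\om du$ and $L^-_\de(F)(v)=\int_{\R^3\times\S^2}B\,F(u)(1-\de F(u'))(1-\de F(v'))\,d\om du$; each of these, hence $\widetilde L_\de(F)$, is nonnegative as soon as $0\le F\le1/\de$. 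Starting from $F^0(t,x,v)=F_0(x-vt,v)$, define $F^{n+1}=\mu_{\de,\rho}+\sqrt{\bar\mu_{\de,\rho}}\,f^{n+1}$ by
\begin{align*}
F^{n+1}(t,x,v)={}&e^{-\int_0^t\widetilde L_\de(F^n)(\tau,x-v(t-\tau),v)\,d\tau}F_0(x-vt,v)\\
&+\int_0^t e^{-\int_s^t\widetilde L_\de(F^n)(\tau,x-v(t-\tau),v)\,d\tau}\,L^+_\de(F^n)(s,x-v(t-s),v)\,ds.
\end{align*}
Since $\de\, L^+_\de(F^n)\le\widetilde L_\de(F^n)$ and $\frac{d}{ds}e^{-\int_s^t\widetilde L_\de(F^n)}=\widetilde L_\de(F^n)(s)\,e^{-\int_s^t\widetilde L_\de(F^n)}$, the last integral is at most $\de^{-1}\bigl(1-e^{-\int_0^t\widetilde L_\de(F^n)}\bigr)$, so $0\le F^0\le1/\de$ propagates to $0\le F^n\le1/\de$ for every $n$. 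Because $\mu_{\de,\rho}$ is a spatially homogeneous steady state ($\mathcal C_\de(\mu_{\de,\rho})=0$), subtracting it and dividing by $\sqrt{\bar\mu_{\de,\rho}}$ recasts the scheme as an iteration for $f^n$ whose limit is a mild solution of the perturbed equation in the sense of \eqref{mildQBE}.

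\emph{Uniform-in-$\de$ collision estimates.} Using only the elementary bounds $\mu_{\de,\rho}(v)\le\rho^{-1}e^{-|v|^2/2}$, $\sqrt{\bar\mu_{\de,\rho}}(v)\le\rho^{-1/2}e^{-|v|^2/4}$, $\de\mu_{\de,\rho}(v)\le1$ and $\mu_{\de,\rho}/(1-\de\mu_{\de,\rho})=\rho^{-1}e^{-|v|^2/2}$ --- all holding uniformly in $0<\de\le1$ --- one proves $0\le\nu_\de(v)\le C_{*\rho}$, the bounded linear estimate $\|w_\be K_\de g\|_{L^\infty_{v,x}}\le C_{*\rho}\|w_\be g\|_{L^\infty_{v,x}}$, and the bi/trilinear estimate $\|w_\be\Ga_\de(g)\|_{L^\infty_{v,x}}\le C_{*\rho}\bigl(\|w_\be g\|^2_{L^\infty_{v,x}}+\|w_\be g\|^3_{L^\infty_{v,x}}\bigr)$ with its Lipschitz-in-$g$ counterpart, where $C_{*\rho}$ is continuous in $\rho$ and independent of $\de$; the same bounds cover the terms $L^+_\de$ and $\widetilde L_\de$ appearing in the scheme after the substitution $F=\mu_{\de,\rho}+\sqrt{\bar\mu_{\de,\rho}}f$. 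Here the soft potential $|v-u|^\ga$, $-3<\ga<0$, is locally integrable near $v=u$, and a Grad-type change of variables together with the Gaussian factors produces a kernel $k_\de$ with $\sup_v\int_{\R^3}|k_\de(v,v')|\,w_\be(v)/w_\be(v')\,dv'\le C_{*\rho}$, finiteness of which is where the assumption $\be>6$ is used.

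\emph{Short-time closure and uniqueness.} Set $N=\|w_\be f_0\|_{L^\infty_{v,x}}$. Assuming $\|w_\be f^n\|_{L^\infty_tL^\infty_{v,x}}\le2N$ for $t\le T_1$, inserting the estimates above into the scheme and using $e^{-\int_s^t\widetilde L_\de(F^n)}\le1$ gives $\|w_\be f^{n+1}\|_{L^\infty_tL^\infty_{v,x}}\le N+t\,C_{*\rho}\,C\,(N+N^2+N^3)$; choosing $T_1$ as in \eqref{T_1} with $C_1=C_1(\be,\ga)$ small enough makes the right-hand side $\le2N$, closing the induction. The same manipulation applied to $f^{n+1}-f^n$, using that $\Ga_\de$ is bi/trilinear, yields $\|w_\be(f^{n+1}-f^n)\|_{L^\infty_{T_1}L^\infty_{v,x}}\le\tfrac12\|w_\be(f^n-f^{n-1})\|_{L^\infty_{T_1}L^\infty_{v,x}}$, so $\{f^n\}$ converges in $L^\infty_{T_1}L^\infty_{v,x}$ to a limit $f$ with $\|w_\be f\|_{L^\infty_{T_1}L^\infty_{v,x}}\le2N$ solving \eqref{mildQBE}; passing to the limit in the scheme gives $0\le F\le1/\de$. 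Uniqueness follows from the same contraction estimate applied to the difference of two such solutions on $[0,T_1]$.

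\emph{Main obstacle.} The delicate point is the second step: obtaining the $K_\de$, $\Ga_\de$ (and $L^+_\de$, $\widetilde L_\de$) estimates with constants genuinely independent of $\de\in(0,1]$, in particular as $\de\to0$ where $1/\de\to\infty$, while controlling the singularity of the soft-potential kernel at $v=u$ simultaneously with the propagation of the weight $w_\be$ through the collision geometry; a secondary point is to order the induction in the first step so that the nonnegativity of $\widetilde L_\de(F^n)$ and $L^+_\de(F^n)$, which rests on $F^n\le1/\de$, is available precisely where it is needed to deduce $F^{n+1}\le1/\de$.
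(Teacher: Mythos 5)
Your proposal is correct and follows essentially the same route as the paper: your iteration scheme (with $\widetilde L_\de(F^n)$ in the exponent and $L^+_\de(F^n)$ as source) is exactly the paper's semi-implicit approximation \eqref{approseq} written in mild form, and the uniform-in-$\de$ collision bounds, short-time closure, and contraction argument match Lemmas \ref{Localbound}--\ref{cauchyseq}. The only cosmetic difference is that you obtain $F^{n+1}\le 1/\de$ by bounding the Duhamel integral directly via $\de L^+_\de\le\widetilde L_\de$, whereas the paper derives a second transport equation for $G^{n+1}=1-\de F^{n+1}$ and argues nonnegativity of $G^{n+1}$; both are valid.
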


\begin{theorem}[Global existence]\label{global}
Let all the assumptions in Theorem \ref{local} be satisfied. Furthermore, for $3/(3+\ga)< p<\infty$, let $\be > \max\{6,16/(5p-1)\}$. There is a constant $C_2=C_2(\ga,\be)>0$ such that for any constant $M\geq 1$, there are constants $\eps=\eps(\ga,\be,M)>0$ and $\bar{M_\rho}>0$ such that if it holds that $\|w_\be f_0\|_{L^{\infty}_{v,x}}\leq M$ and
\begin{align}\label{smallness}
\sup_{0\leq\de\leq1}\left\{ \CE_{\de,\rho}(F_0)+\|f_0\|_{L^1_xL^\infty_v}\right\}\leq \eps \bar{M_\rho},
\end{align}
then the Cauchy problem on the quantum Boltzmann equation \eqref{QBE} has a unique global mild solution
$F(t,x,v)=\mu_{\de,\rho}+\sqrt{\bar{{\mu}}_{\de,\rho}} f(t,x,v)$, $(t,x,v)\in[0,\infty)\times \Omega \times \R^3$, in the sense of \eqref{mildQBE}, satisfying $0\leq F(t,x,v)\leq 1/\de$ and
\begin{align}\label{GE}
\left\|w_\be f%\frac{F-\mu}{\sqrt \mu}
\right\|_{{L^{\infty}_{T}L^{\infty}_{v,x}}}\leq C_2\bar{C}_{*\rho}M^3,
\end{align}
for any $T\geq 0$. Moreover, both $\bar{M_\rho}$ and $\bar{C}_{*\rho}$ depend continuously on $\rho\in (0,\infty)$. $\bar{M_\rho}$ is defined in \eqref{R3barMrho} and \eqref{T3barMrho} for $\R^3$ and $\T^3$,  respectively, and $\bar{C}_{*\rho}$ is defined in \eqref{barC*rho}. 
\end{theorem}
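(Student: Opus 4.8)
The local theory in Theorem~\ref{local} yields a unique mild solution on a time interval whose length \eqref{T_1} shrinks as $\|w_\be f_0\|_{L^\infty_{v,x}}$ grows, so the task is to establish a priori bounds that allow the solution to be continued globally. The plan is a continuity argument in which two quantities are propagated in tandem: the weighted uniform norm $\|w_\be f\|_{L^\infty_T L^\infty_{v,x}}$, which is merely kept finite and bounded by the right-hand side of \eqref{GE}, and the dispersive norm $\|f\|_{L^\infty_{T_0,T}L^\infty_xL^1_v}$ for a fixed small $T_0>0$, which is shown to be small (of order a positive power of $\eps$). Concretely, let $T^\ast$ be the largest time such that the mild solution exists on $[0,T^\ast]$, obeys $0\le F\le 1/\de$, and satisfies $\|w_\be f\|_{L^\infty_{T^\ast}L^\infty_{v,x}}\le C_2\bar C_{*\rho}M^3$; by Theorem~\ref{local} and \eqref{smallness} we have $T^\ast>0$, and it suffices to show that this inequality is in fact strict on $[0,T^\ast]$, which by the continuation criterion forces $T^\ast=+\infty$. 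A guiding constraint is that \emph{all} constants must be independent of $\de\in(0,1]$; the $\rho$-dependence is isolated through the uniform pointwise bounds $\mu_{\de,\rho}(v)\le\rho^{-1}e^{-|v|^2/2}$ and $\sqrt{\bar\mu_{\de,\rho}}(v)\le\rho^{-1/2}e^{-|v|^2/4}$ (immediate from \eqref{Defmuderho}, \eqref{Defbarmu}) and through the local constant $C_{*\rho}$, from which $\bar M_\rho$ and $\bar C_{*\rho}$ inherit continuity in $\rho$.

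\textbf{Step 1: the $L^1_v$ estimate.}
The conservation laws \eqref{M}, \eqref{E} and the entropy inequality \eqref{H}, combined with the definition \eqref{DefCE}, show that $t\mapsto\CE_{\de,\rho}(F(t))$ is nonnegative and nonincreasing, hence $\CE_{\de,\rho}(F(t))\le\CE_{\de,\rho}(F_0)\le\eps\bar M_\rho$ for all $t$. A Csisz\'ar--Kullback--Pinsker type inequality for the Fermi--Dirac entropy then turns this into smallness of $\|\sqrt{\bar\mu_{\de,\rho}}f(t)\|_{L^1_{x,v}}$ on the bulk region where $|f|$ is bounded by a fixed constant, while on the complementary small-measure region the bootstrap bound on $\|w_\be f\|_{L^\infty}$ controls the contribution; this step too must be carried out with $\de$-uniform constants. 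To pass from this averaged-in-$x$ smallness to the $\sup_x$ norm, one uses the mild form \eqref{mildQBE} integrated over $v\in\R^3$: the free transport term is treated by the change of variables $y=x-vt$, which produces the dispersive gain $\int_{\R^3}|f_0(x-vt,v)|\,dv\le t^{-3}\|f_0\|_{L^1_xL^\infty_v}$, hence is $O(T_0^{-3}\eps\bar M_\rho)$ for $t\ge T_0$; the $K_\de$ and $\Ga_\de$ contributions are absorbed by a Gr\"onwall argument using $\int_0^t\nu_\de(v)e^{-\nu_\de(v)(t-s)}\,ds\le1$, the $\de$-uniform bound on the kernel $k_\de$ of $K_\de$ from \eqref{DefKde}, and the a priori $L^\infty$ bound. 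The degeneracy $\nu_\de(v)\sim(1+|v|)^\ga$ with $-3<\ga<0$ obliges one to split $\{|v|\le R\}$ and $\{|v|>R\}$ and to trade the missing time decay against powers of $w_\be$, and optimizing the split is precisely what forces $3/(3+\ga)<p<\infty$ and $\be>16/(5p-1)$, as in the statement.

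\textbf{Step 2: the weighted $L^\infty$ estimate.}
I would iterate \eqref{mildQBE} once more. The free term is at once bounded by $\|w_\be f_0\|_{L^\infty_{v,x}}\le M$. For the linear term one splits $K_\de=K_\de^m+K_\de^c$, where $K_\de^m$ has a smooth kernel supported in $\{|v|\le N,\ |v-v_1|\ge 1/N\}$; the remainder $K_\de^c$ contributes $O(N^{-c})$ uniformly in $\de$ (this uses the explicit structure \eqref{DefKde} and the uniform Gaussian bounds above to see that $k_\de$ enjoys the same Grad-type pointwise decay as in the classical case), while substituting \eqref{mildQBE} once more into the $K_\de^m$ part and changing variables along the broken backward trajectory $v_1\mapsto y=x-v(t-s_1)-v_1(s_1-s_2)$ recasts the double integral as an $L^1_v$ average of $f$, producing $O\!\big(C_N\,\|f\|_{L^\infty_{T_0,T}L^\infty_xL^1_v}\big)$ plus a short-time remainder $O(C_N T_0)$ from the region where the change of variables degenerates. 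For the nonlinear term I would estimate $\Ga_\de(f)$ from \eqref{DefGade} term by term, using the key algebraic observation that the constraint $0\le F\le 1/\de$ yields the pointwise bound $\de\,\sqrt{\bar\mu_{\de,\rho}}(v)\,|f(v)|=\de|F(v)-\mu_{\de,\rho}(v)|\le 2$; this collapses every $\de$-weighted trilinear term in \eqref{DefGade} into a bilinear term with a bounded coefficient, \emph{uniformly in} $\de$. The remaining bilinear expressions are controlled by placing one factor in the weighted $L^\infty$ norm and the other in $L^1_v$, so that, after time integration, $\Ga_\de$ contributes a term of the form $C\,\|w_\be f\|_{L^\infty_T L^\infty_{v,x}}\cdot\big(T_0+\|f\|_{L^\infty_{T_0,T}L^\infty_xL^1_v}\big)$. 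Collecting the three contributions gives a self-consistent inequality of the shape
\begin{equation*}
\|w_\be f\|_{L^\infty_T L^\infty_{v,x}}\le C\Big(M+(N^{-c}+T_0+\eps^{\ka})\,\|w_\be f\|_{L^\infty_T L^\infty_{v,x}}+\text{(controlled $\de$-uniform remainders)}\Big),
\end{equation*}
and choosing first $N$ large, then $T_0$ small (with $[0,T_0]$ covered by the local solution obeying \eqref{LE}), and finally $\eps=\eps(\ga,\be,M)$ small closes the bootstrap at the level $C_2\bar C_{*\rho}M^3$ asserted in \eqref{GE}.

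\textbf{Step 3: uniqueness, and the main obstacle.}
Global existence follows from Step~2 by the standard continuation argument. Uniqueness is obtained by subtracting the mild forms of two solutions $f_1,f_2$ with the same datum, estimating the linear and (again bilinear/trilinear, collapsed via $\de F\le 1$) contributions to $f_1-f_2$ exactly as above, and closing by Gr\"onwall in $L^\infty_{T}L^\infty_{v,x}$ on short intervals. The principal difficulty throughout is the uniformity in $\de\in(0,1]$: the collision frequency $\nu_\de$ in \eqref{Defnude}, the kernel $K_\de$ in \eqref{DefKde}, and above all the many extra trilinear terms in \eqref{DefGade} that have no classical counterpart must be handled with constants that neither degenerate as $\de\to0$ nor as $\de\to1$; the remedy is exactly the pair of devices above — the $\de$-independent Gaussian upper bounds on $\mu_{\de,\rho},\sqrt{\bar\mu_{\de,\rho}}$, and the a priori bound $\de F\le1$ that reduces every quantum-enhanced term to a classical bilinear or linear structure. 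Because the resulting constants are $\de$-independent, one may finally let $\de\to0$ in \eqref{GE} and in \eqref{mildQBE} to recover a global large-amplitude mild solution of the classical Boltzmann equation for soft potentials, thereby retrieving the result of \cite{DHWY}.
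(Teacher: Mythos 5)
Your architecture is the same as the paper's: a bootstrap propagating $\|w_\be f\|_{L^{\infty}_{T}L^{\infty}_{v,x}}$ together with $\|f\|_{L^{\infty}_{T_1,T}L^{\infty}_{x}L^{1}_{v}}$, with the entropy functional plus the dispersive decay of free transport supplying the smallness, and with $\de$-uniformity extracted from the Gaussian bounds on $\mu_{\de,\rho}$ and $\bar\mu_{\de,\rho}$ (Lemmas \ref{muequi}, \ref{KdeK0}). Two of your mechanisms, however, would not go through as written. First, in Step 1 you propose to handle the $K_\de$ and $\Ga_\de$ contributions to the $L^{\infty}_{x}L^{1}_{v}$ bound "by a Gr\"onwall argument using $\int_0^t\nu_\de e^{-\nu_\de(t-s)}ds\le 1$". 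For soft potentials this fails: the linear coefficient produced is $O(1)$, not small, and in fact $\int_0^t e^{-\nu_\de(v)(t-s)}ds\sim\nu_\de(v)^{-1}\sim(1+|v|)^{-\ga}$ grows in $|v|$, so after the $v$-integration the Duhamel term is not even finite without a further splitting. The paper's Lemma \ref{L1est} instead shows each piece is \emph{individually} small: the small-relative-velocity part of the kernel gives $O(m^{3+\ga})$, large velocities give $O(1/N)$, the slice $[t-\la,t]$ gives $O(\la)$, and the remaining bounded-velocity, time-separated piece is converted by the change of variables $y=x-v(t-s)$ (Jacobian $\lesssim\la^{-3}$) into an $x$-integral of $|F-\mu_{\de,\rho}|$ controlled by Lemma \ref{Taylor} through $\CE_{\de,\rho}(F_0)$. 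Your entropy input is the right ingredient, but it must enter the Duhamel terms through this change of variables, not through Gr\"onwall.

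Second, your nonlinear estimate, "place one factor in the weighted $L^\infty$ norm and the other in $L^1_v$", does not close when $-3<\ga<0$: the loss term contains $\int|v-u|^{\ga}|f(u)|\,du$, which is not bounded by $\|f\|_{L^1_v}$ because of the singularity at $u=v$, and the gain term contains $f(v')$, for which no $L^1$ norm is visible before a change of variables. This is exactly what Lemma \ref{Gaest} supplies: H\"older with $p>3/(3+\ga)$ (so that $|v-u|^{\ga p/(p-1)}$ is locally integrable) and a Carleman-type change of variables for $f(v')$, producing the fractional powers $\|f\|_{L^1_v}^{1/p}$ and $\|f\|_{L^1_v}^{1/(5p)}$ and forcing $\be>16/(5p-1)$; you cite these hypotheses but attribute them to the $L^1_v$ step rather than to the $\Ga_\de$ estimate, and the subsequent closing of the bootstrap must accommodate these fractional powers rather than a clean bilinear product. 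On the positive side, your observation that $\de\sqrt{\bar\mu_{\de,\rho}}|f|=\de|F-\mu_{\de,\rho}|\le 2$ collapses the trilinear terms is a legitimate simplification the paper does not use (it keeps the cubic terms and absorbs them via $\sqrt{\bar\mu_{\de,\rho}}\le\rho^{-1/2}$), and bounding the once-iterated linear term by $C_N\|f\|_{L^{\infty}_{x}L^{1}_{v}}$ instead of invoking the entropy there directly (as the paper does in $J_{124}$) is a workable variant, provided the mollified kernel $l_N$ is used so that the $\xi$-integral is finite.
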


In what follows we give an example of initial data allowing for the large amplitude in space variable. Indeed, for given $\rho>0$ and $\beta>0$ as in Theorem \ref{global}, we let $M\geq C_\beta/\sqrt{\rho}$ be a constant that can be arbitrarily large, where $C_\beta:=\sup_v (1+|v|)^\beta e^{-|v|^2/4}$ is a finite positive constant. For $\eps>0$ and $0<\delta\leq 1$, we define a set of functions as
\begin{equation}
\label{def.phi}
\mathcal{A}_{\eps,\delta}=\{\phi(x): 0<\phi(x)\leq \min\{1+\frac{\rho}{\delta}, 1+\frac{M\sqrt{\rho}}{C_\beta}\},\|\phi \ln \phi\|_{L^1_x}+\|\phi-1\|_{L^1_x}\leq \eps\}.
\end{equation}   
We now take 
\begin{equation}
\label{examp}
F_0(x,v)=\phi(x)\mu_{\delta,\rho}(v)=\frac{\phi(x)}{\delta+\rho e^{\frac{|v|^2}{2}}},\quad \phi(x)\in\mathcal{A}_{\eps,\delta}.
\end{equation}
It is straightforward to verify that $0<F_0(x,v)\leq 1/\delta$ and $\|w_\beta f_0\|_{L^\infty_{v,x}}\leq M$ hold true. Moreover, direct computations give that  
\begin{align*}%\label{inientro}
\CE_{\de,\rho}(F_0)=&\int_{\Omega}\int_{\R^3}\Big[\phi\mu_{\de,\rho}\log\phi+(\phi-1)\mu_{\de,\rho}\log\mu_{\de,\rho}\notag\\
&\qquad-(\phi-1)\mu_{\de,\rho}\log(1-\mu_{\de,\rho})+\de(1-\de\mu_{\de,\rho})\log(1+\frac{\de}{\rho}e^{-\frac{|v|^2}{2}}(\phi-1))\Big]\,dvdx.
\end{align*}
Noticing $\log (1+s)\leq s$ for any $s>-1$, it then follows that
\begin{equation*}
%\label{ }
\CE_{\de,\rho}(F_0)+\|f_0\|_{L^1_xL^\infty_v}\leq C_{\rho} (\|\phi \ln \phi\|_{L^1_x}+\|\phi-1\|_{L^1_x})\leq C_{\rho}\eps,
\end{equation*}
where the upper bound $C_{\rho}\eps$ can be small as long as $\eps>0$ is small enough. Therefore, all the assumptions on $F_0(x,v)$ in Theorem \ref{global} are satisfied. It is obvious to see that such initial data in \eqref{examp} with $\phi(x)$ in \eqref{def.phi} allows for the large amplitude in space variable, cf.~\cite{DHWY,DW}.

For soft potentials, we first consider the structure of the perturbed modified quantum Boltzmann equation around the equilibrium and its relation with the quantum parameter. Due to the lack of spectral gap of the linearized operator, we need to introduce a smooth cut-off function in order to separate the operator into two parts. One part is degenerate and we can let it be small under integral. The other part provides the decay in $v$, and the decay also depends on the quantum parameter. The proof starts with the mild form of the equation which includes three parts, the initial datum, the term containing the linear operator $K_\de$ and the term containing the nonlinear operator $\Ga_\de$. The initial datum will not cause many troubles for us so we only need to take care of the rest two parts. We first deduce an estimate uniformly in $\de$ for the nonlinear part so that this part can be controlled by the product of $\| w_\be f \|_{L^\infty}$ and $\| f \|_{L^1_v}$. Next we need to treat the second part with $K_\de$ in it. Since we divide it into two parts as mentioned above, we can control the degenerate part easily. For the other part, after one time iteration, we can bound it by $\CE_{\de,\rho}(F_0)$ which is defined in \eqref{DefCE}. Then we can expect that if $\CE_{\de,\rho}(F_0)$ and $\| f \|_{L^1_v}$ are both small, one can close our a priori assumption. We require $\CE_{\de,\rho}(F_0)$ to be sufficiently small, but we still need to obtain the smallness of $\| f \|_{L^1_v}$. Actually, $\| f \|_{L^1_v}$ is able to be small as long as $\| f_0 \|_{L^1_xL^\infty_v}$ and $\CE_{\de,\rho}(F_0)$ are both small uniformly in $\de$. Hence, at last we obtain a global solution by closing the a priori estimate. Notice we also track the effect of another parameter $\rho>0$. Then for every smallness or largeness estimate, we need to know exactly how small or how large it is, which needs more accurate calculations.

The paper is organized as follows. In Section 2, we study the linear operator $L_\de$ and give an important lemma on the entropy. In Section 3, we construct an approximation sequence under the assumptions in Theorem \ref{local} to obtain a local solution by taking the limit. Moreover, the limit function satisfies all the estimates in Theorem \ref{local}. In Section 4, we derive the global estimates using the $L^{\infty}_{T}L^{\infty}_{v,x}\cap L^{\infty}_{T}L^{\infty}_{x}L^1_v$ approach to prove Theorem \ref{global}.

\section{Preliminaries}
It is direct to see that when the quantum parameter $\de=0$, the equation \eqref{QBE} will become the Boltzmann equation in the classical sense. In our proof, one of the key points is to estimate the linear operator $L_\delta$ defined in \eqref{DefLde}. We observe that since $\de$ is bounded, $\mu_{\de,\rho}$ is equivalent to the Maxwellian
\begin{align}\label{Defmu0}
\mu_0(v)=e^{-\frac{|v|^2}{2}}
\end{align}
in the following sense
\begin{align*}%\label{equi1}
\frac{1}{\rho+1}\mu_0(v)\leq\mu_{\de,\rho}(v)\leq \frac{1}{\rho}\mu_0(v).
\end{align*}
The above inequality provides a clue that we can use some known results for the classic Boltzmann equation around the global Maxwellian to help us estimate the operator $L_\de$. Thus, we introduce some properties for the classical Boltzmann equation. The operator $L_0$ is given by
\begin{align*}%\label{DefK0}
L_0=\nu-K_0,
\end{align*}
where
\begin{align*}
\nu(v)=&\int_{\R^3}\int_{\S^2}B(v-u,\theta)\mu_0(u)d\omega du\sim (1+|v|)^\ga,\\
K_{0}f(v)=\int_{\R^3}\int_{\S^2}&B(v-u,\theta)\sqrt{\mu_0(u)}\left( \sqrt{\mu_0(u')}f(v')+\sqrt{\mu_0(v')}f(u')-\sqrt{\mu_0(v)}f(u) \right)d\omega du.
\end{align*}
Then we have the following lemmas. Interested readers may refer to \cite{BPT,Glassey,Guo,DHWY} for details.

\begin{lemma}\label{K}
Let $-3< \ga < 0$ and $\be \in \R$. There exists a function $k=k(v,\eta)$ such that $K_0f(v)$ can be written as
\begin{align*}%\label{RepK}
K_0f(v)=\int_{\R^3}k(v,\eta)f(\eta)d\eta,
\end{align*}
with estimates
\begin{align*}
\dis |k(v,\eta)|\leq C|v-\eta|^\ga e^{-\frac{|v|^2}{4}}e^{-\frac{|\eta|^2}{4}}+\frac{C(\ga)}{|v-\eta|^\frac{3-\ga}{2}}e^{-\frac{|v-\eta|^2}{8}}e^{-\frac{\left| |v|^2-|\eta|^2\right|^2}{8|v-\eta|^2}},
\end{align*}
and
\begin{align}\label{Prok}
\int_{\R^3}\left| k_w(v,\eta) \right|d\eta \leq C(\ga) (1+|v|)^{-1},
\end{align}
where 
\begin{align}\label{Defkw}
k_w(v,\eta):=k(v,\eta)\cdot \frac{w_\be(v)}{w_\be(\eta)}.
\end{align}
\end{lemma}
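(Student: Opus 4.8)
The plan is to recognize this as the classical Grad kernel representation for the linearized Boltzmann operator around the Maxwellian $\mu_0$. Since $K_0$ does not involve the quantum parameter $\de$ or the constant $\rho$, the statement is exactly the standard one, and I would ultimately invoke \cite{BPT,Glassey,Guo,DHWY}; for completeness I describe the structure of the argument. First I would split $K_0=K_{02}-K_{01}$, where $K_{01}f(v)=\int_{\R^3}\int_{\S^2}B(v-u,\ta)\sqrt{\mu_0(u)\mu_0(v)}\,f(u)\,d\om du$ is the loss-type term and $K_{02}$ collects the two gain-type terms carrying $f(v')$ and $f(u')$. For $K_{01}$ one integrates $b(\ta)$ over $\S^2$, which is a finite constant by rotational symmetry (using the cutoff $b(\ta)\le C|\cos\ta|$), so that the kernel is $k_1(v,\eta)=C_b|v-\eta|^\ga e^{-(|v|^2+|\eta|^2)/4}$, precisely the first term of the claimed pointwise bound.

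Next I would treat $K_{02}$ by a Carleman-type change of variables, handling the term with $f(v')$ first (the one with $f(u')$ is parallel, using $\eta=u'$). The collision identities \eqref{velocity} show that the constraint $v'=\eta$ forces $\om\parallel(v-\eta)$ and $u$ to lie on the $2$-plane $\{u:(u-\eta)\cdot(v-\eta)=0\}$; writing $u=\eta+z$ with $z\perp(v-\eta)$ one gets $u'=u+v-\eta$, $|v-u|^2=|v-\eta|^2+|z|^2$, and, by energy conservation, $|u|^2+|u'|^2=2|u|^2+|v|^2-|\eta|^2$. Desintegrating the $(\om,u)$-integral along the level sets $\{v'=\eta\}$, computing the resulting Jacobian, completing the square in $z$ inside the Gaussian weight $\sqrt{\mu_0(u)\mu_0(u')}$, and integrating out $z$ over the $2$-plane against the factor $(|v-\eta|^2+|z|^2)^{\ga/2}$ coming from $B$, the $z$-integral converges precisely because $\ga>-3$ and leaves a contribution bounded by $C(\ga)|v-\eta|^{-(3-\ga)/2}e^{-|v-\eta|^2/8}e^{-(|v|^2-|\eta|^2)^2/(8|v-\eta|^2)}$, where the two Gaussian factors record the distance to the collision manifold. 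Adding the two gain terms gives the full pointwise estimate on $k(v,\eta)$.

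Finally, for the weighted bound \eqref{Prok} I would first use $w_\be(v)/w_\be(\eta)\le C(1+|v-\eta|)^{|\be|}$ to absorb the weight ratio into the exponential factors (enlarging the constants slightly), reducing the claim to $\int_{\R^3}\bigl(|v-\eta|^\ga+|v-\eta|^{-(3-\ga)/2}\bigr)e^{-|v-\eta|^2/16-(|v|^2-|\eta|^2)^2/(16|v-\eta|^2)}\,d\eta\le C(\ga)(1+|v|)^{-1}$. Both singularities at $\eta=v$ are locally integrable in $\R^3$ because $\ga>-3$, hence $(3-\ga)/2<3$. The decay in $|v|$ comes from splitting the domain of integration: on $\{|v-\eta|\ge|v|/2\}$ the factor $e^{-|v-\eta|^2/16}$ already yields exponential decay in $|v|$, and on $\{|v-\eta|<|v|/2\}$ one has $|v+\eta|\sim|v|$, so $\bigl||v|^2-|\eta|^2\bigr|=|v-\eta|\,|v+\eta|\gtrsim|v-\eta|\,|v|$ and the second exponential contributes $e^{-c|v|^2}$; a further split into $|v-\eta|\le1$ and $1\le|v-\eta|\le|v|/2$ produces the precise rate $(1+|v|)^{-1}$. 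I expect the main obstacle to be the desintegration/Jacobian computation in the previous paragraph that pins down the soft-potential exponent $(3-\ga)/2$; the rest consists of routine Gaussian estimates, and in any case the full argument is available in the references cited above.
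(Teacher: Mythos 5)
The paper offers no proof of this lemma at all---it is quoted verbatim from the classical literature with the citation \cite{BPT,Glassey,Guo,DHWY}---so your fallback to those references is exactly the paper's own ``proof,'' and your sketch of the Grad/Carleman computation for the pointwise kernel bound (loss term giving $|v-\eta|^\ga e^{-|v|^2/4}e^{-|\eta|^2/4}$, gain terms desintegrated over the hyperplane $\{u:(u-\eta)\cdot(v-\eta)=0\}$ with the two Gaussian factors measuring the distance to the collision manifold) is the standard and correct route to the exponent $(3-\ga)/2$.

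However, your sketch of the weighted integral bound \eqref{Prok} contains a genuine error. On the region $\{|v-\eta|<|v|/2\}$ you assert $\bigl||v|^2-|\eta|^2\bigr|=|v-\eta|\,|v+\eta|\gtrsim |v-\eta|\,|v|$, but the correct identity is $|v|^2-|\eta|^2=(v-\eta)\cdot(v+\eta)$, which only gives the \emph{upper} bound $\bigl||v|^2-|\eta|^2\bigr|\le|v-\eta|\,|v+\eta|$. The claimed lower bound fails badly whenever $|\eta|$ is close to $|v|$ (e.g.\ on the sphere $|\eta|=|v|$ the left-hand side vanishes while $|v-\eta|$ need not), which is precisely the region carrying the main contribution. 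Consequently the second exponential does \emph{not} supply a factor $e^{-c|v|^2}$ there---if it did, $\int|k_w|\,d\eta$ would decay exponentially in $|v|$ rather than like $(1+|v|)^{-1}$, and your own final step (extracting the rate $(1+|v|)^{-1}$ from an integrand you have just declared exponentially small) is then internally inconsistent. The correct mechanism is angular: writing $\eta=v+y$ one has
\begin{align*}
\frac{\bigl(|v|^2-|\eta|^2\bigr)^2}{|v-\eta|^2}=\Bigl(|y|+2\,v\cdot\tfrac{y}{|y|}\Bigr)^2,
\end{align*}
and in polar coordinates $y=r\sigma$ the angular integral $\int_{\S^2}e^{-(r+2v\cdot\sigma)^2/16}\,d\sigma$ equals $\frac{2\pi}{|v|}\int_{-|v|}^{|v|}e^{-(r+2s)^2/16}\,ds\le C|v|^{-1}$; it is this co-area factor---the Gaussian is of size one only on an $O(1/|v|)$-thin spherical shell---that produces the rate $(1+|v|)^{-1}$. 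The Peetre bound $w_\be(v)/w_\be(\eta)\le C(1+|v-\eta|)^{|\be|}$ and its absorption into the exponentials are fine, as is the local integrability of both singularities for $-3<\ga<0$.
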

Furthermore, as in \cite{SG}, if we introduce a smooth cut-off function $\chi_m=\chi_m(\tau)$ with $0\leq m\leq 1$, $\chi_m(\tau)=1$ for $\tau \leq m$, $\chi_m(\tau)=0$ for $\tau \geq 2m$ and $0\leq \chi_m \leq 1$ for $m\leq \tau \leq 2m$ and define $K^m_0$ and $K^c_0$ by
\begin{align}\label{Km}
K_0^mf(v)=&\int_{\R^3}\int_{\S^2}B(v-u,\theta)\chi_m(|v-u|)\notag\\
&\quad\left( \frac{\sqrt{\mu_{0}(v')}}{\sqrt{\mu_{0}(v)}}\mu_{0}(u')f(v')+\frac{\sqrt{\mu_{0}(u')}}{\sqrt{\mu_{0}(v)}}\mu_{0}(v')f(u') 
-\sqrt{\mu_0(u)}\sqrt{\mu_0(v)}f(u) \right)d\omega du
\end{align}
and
\begin{align}\label{sepaK}
K^c_0=K_0-K_0^m,
\end{align}
then the following lemma holds.

\begin{lemma}\label{KmKc}
Let $-3< \ga < 0$ and $\be \in \R$. For given function $f=f(v)$, it holds that
\begin{align}\label{BKm}
|K_0^mf(v)|\leq &\int_{\R^3}\int_{\S^2}B(v-u,\theta)\chi_m(|v-u|)\notag\\
&\quad\left( \frac{\sqrt{\mu_{0}(v')}}{\sqrt{\mu_{0}(v)}}\mu_{0}(u')\left|f(v')\right|+\frac{\sqrt{\mu_{0}(u')}}{\sqrt{\mu_{0}(v)}}\mu_{0}(v')\left|f(u') \right|
+\sqrt{\mu_0(u)}\sqrt{\mu_0(v)}\left|f(u)\right| \right)d\omega du\notag\\
\leq&Cm^{3+\ga}e^{-\frac{|v|^2}{20}}\|f\|_{L^\infty_v}. 
\end{align}
Moreover, there are functions $l_1(v,\eta)$, $l_2(v,\eta)$, such that $K_0^cf(v)$ can be written as
\begin{align*}%\label{RepKc}
K_0^cf(v)=\int_{\R^3}\left(l_2(v,\eta)-l_1(v,\eta)\right)f(\eta)d\eta,
\end{align*}
with estimates
\begin{align}\label{Kc0es}
\left|K_0^cf(v)\right|\leq&\int_{\R^3}\int_{\S^2}B(v-u,\theta)\left(1-\chi_m(|v-u|)\right)\notag\\
&\quad\left( \frac{\sqrt{\mu_{0}(v')}}{\sqrt{\mu_{0}(v)}}\mu_{0}(u')\left|f(v')\right|+\frac{\sqrt{\mu_{0}(u')}}{\sqrt{\mu_{0}(v)}}\mu_{0}(v')\left|f(u')\right| 
+\sqrt{\mu_0(u)}\sqrt{\mu_0(v)}\left|f(u)\right| \right)d\omega du\notag\\
\leq&\int_{\R^3}l(v,\eta)\left|f(\eta)\right|d\eta,
\end{align}
and
\begin{align}
l(v,\eta)\leq C|v-\eta|^\ga e^{-\frac{|v|^2}{4}}e^{-\frac{|\eta|^2}{4}}&+\frac{C(\ga)}{|v-\eta|^\frac{3-\ga}{2}}e^{-\frac{|v-\eta|^2}{16}}e^{-\frac{\left| |v|^2-|\eta|^2\right|^2}{16|v-\eta|^2}},\label{Prol2}\\
\int_{\R^3}l_w(v,\eta)e^{\frac{|v-\eta|^2}{20}}d\eta &\leq C(\ga) m^{\ga-1} \frac{\nu(v)}{(1+|v|)^2},\label{Prol1}\\
\int_{\R^3} l_w(v,\eta)e^{\frac{|v-\eta|^2}{20}}d\eta& \leq C(\ga) (1+|v|)^{-1}\notag,%\label{Estionl},
\end{align}
where
\begin{align}\label{lveta}
l(v,\eta):=\left|l_2(v,\eta)\right|+\left|l_1(v,\eta)\right|,\end{align} and
\begin{align}\label{Deflw}
l_w(v,\eta):=\left(\left|l_1(v,\eta)\right|+\left|l_2(v,\eta)\right|\right)\cdot \frac{w_\be(v)}{w_\be(\eta)}.
\end{align}
\end{lemma}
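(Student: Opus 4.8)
The plan is to establish Lemma~\ref{KmKc} by leveraging the already-stated Lemma~\ref{K} together with the explicit, decomposed formula for $K_0$ in terms of the three gain/loss pieces. The key observation is that the splitting $K_0 = K_0^m + K_0^c$ via the cut-off $\chi_m(|v-u|)$ literally separates the integrand, so the bound on $K_0^m$ and the representation of $K_0^c$ can be proved independently, and each one reduces to tracking how the factor $\chi_m$ (resp. $1-\chi_m$) interacts with the kernel estimates underlying Lemma~\ref{K}.

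First I would prove \eqref{BKm}. The pointwise inequality in the first two lines is immediate from the triangle inequality applied inside the defining integral \eqref{Km}. For the final bound, I would use the change of variables $u \mapsto u'$ (or $u \mapsto v'$) in the first two gain terms to convert them into integrals over a ball of radius $\lesssim m$ in the shifted variable, exploiting that $\chi_m(|v-u|)$ is supported on $|v-u|\leq 2m$. On that support $B(v-u,\theta) = |v-u|^\gamma b(\theta)$ with $-3<\gamma<0$, so the angular integral is bounded and the radial integral $\int_{|v-u|\leq 2m}|v-u|^\gamma\,du \lesssim m^{3+\gamma}$; this is exactly where the exponent $3+\gamma$ appears. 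The Gaussian weights $\sqrt{\mu_0(v')}\sqrt{\mu_0(u')}/\sqrt{\mu_0(v)}$ etc.\ combine with $|v-u|\leq 2m$ to produce a clean factor $e^{-|v|^2/20}$ after absorbing the small-velocity-difference error (the standard computation: $|v'|^2+|u'|^2 = |v|^2+|u|^2$ and $|v-v'|,|v-u'|\lesssim m$, so each exponent is $\geq |v|^2/20$ say, once $m$ is small). Pulling $\|f\|_{L^\infty_v}$ out of the integral finishes this part.

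Next I would handle $K_0^c = K_0 - K_0^m$. Since $K_0$ has the kernel representation $k(v,\eta)$ from Lemma~\ref{K}, and $K_0^m$ corresponds to restricting each term to $\chi_m(|v-u|)$, subtraction gives the kernel of $K_0^c$ as the same three-term structure with the factor $1-\chi_m(|v-u|)$ inserted. Writing the loss term as $l_1$ and the combined gain terms as $l_2$ (after the same $u\mapsto u'$, $u\mapsto v'$ changes of variables that put everything in $\eta$-form), the bound \eqref{Prol2} follows from the proof of the kernel bound in Lemma~\ref{K} with the improved numerical constants $16$ in place of $8$ — this is possible precisely because the $1-\chi_m$ factor forces $|v-\eta|\gtrsim m$ away from the singularity, leaving room to split off a Gaussian factor $e^{-|v-\eta|^2/16}$ and still retain enough decay. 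For \eqref{Prol1}, after multiplying by $e^{|v-\eta|^2/20}$ and by the weight ratio $w_\beta(v)/w_\beta(\eta)$, I would redo the computation in \eqref{Prok} but now integrating only over $|v-\eta|\geq m$ (from $1-\chi_m$); the radial integral of $|v-\eta|^{\gamma-(3-\gamma)/2}$-type terms over $|v-\eta|\geq m$ produces the factor $m^{\gamma-1}$, and carefully tracking the $(1+|v|)$-powers yields $\nu(v)/(1+|v|)^2 \sim (1+|v|)^{\gamma-2}$. The unweighted-at-infinity bound $C(\ga)(1+|v|)^{-1}$ is just the $m$-independent version, i.e.\ \eqref{Prok} again with the extra Gaussian exponential absorbed.

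The main obstacle I anticipate is not any single estimate but the bookkeeping in \eqref{Prol1}: one must simultaneously (i) exploit the $1-\chi_m$ support to extract the sharp power $m^{\gamma-1}$, (ii) carry the extra growing factor $e^{|v-\eta|^2/20}$ through the change of variables and against the kernel's $e^{-|v-\eta|^2/16}$ — which is why the constant is $16$ and not $8$, leaving $16$-vs-$20$ room — and (iii) get the precise decay rate $(1+|v|)^{\gamma-2}$ rather than a cruder $(1+|v|)^{-1}$, which requires using the full $e^{-||v|^2-|\eta|^2|^2/(16|v-\eta|^2)}$ factor rather than discarding it. Everything else is a routine adaptation of the classical Grad-type estimates recalled in \cite{Guo,SG,DHWY}; I would state it as such and only spell out the $\chi_m$-dependent modifications.
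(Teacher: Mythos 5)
First, a point of comparison: the paper itself gives no proof of this lemma --- it is quoted, essentially verbatim, from Strain--Guo \cite{SG} together with \cite{Guo,DHWY} --- so you are reconstructing a known literature result rather than paralleling an argument in the text. Your route is the standard one, and the first half is fine: \eqref{BKm} follows from the triangle inequality, the support bound $\int_{|v-u|\le 2m}|v-u|^{\ga}\,du\lesssim m^{3+\ga}$, and the identity $|u'|^2+|v'|^2=|u|^2+|v|^2$, which turns each Gaussian prefactor into something like $e^{-|u|^2/4-|u'|^2/4}$; since $|u|,|u'|,|v'|$ all differ from $|v|$ by at most $2m\le 2$ on the support of $\chi_m$, each is $\lesssim e^{-|v|^2/20}$. (The change of variables $u\mapsto u'$ you invoke is unnecessary for the $L^\infty$ bound --- just pull out $\|f\|_{L^\infty_v}$.)

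There is, however, a genuine gap in your derivation of \eqref{Prol1}. You claim the factor $m^{\ga-1}$ arises from ``integrating only over $|v-\eta|\ge m$''. That restriction is valid only for the loss kernel $l_1$, where $\eta=u$; and the loss kernel carries $e^{-|v|^2/4}e^{-|\eta|^2/4}$, so it contributes far better than $m^{\ga-1}(1+|v|)^{\ga-2}$ and is not where the factor comes from. For the gain kernels $l_2$ one has $\eta=v'$ or $\eta=u'$, and since $|v-v'|=|(v-u)\cdot\omega|\le|v-u|$ and likewise $|v-u'|\le|v-u|$, the condition $|v-u|\ge m$ imposes \emph{no} lower bound on $|v-\eta|$: the singularity of $l_2$ at $\eta=v$ survives. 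The actual mechanism in \cite{SG} is to desingularize \emph{before} the Carleman/Grad reduction, writing $|v-u|^{\ga}\,(1-\chi_m(|v-u|))\le m^{\ga-1}|v-u|$ on the support of $1-\chi_m$, and then running the unrestricted reduction for the nonsingular kernel; the decay $(1+|v|)^{\ga-2}$ is then extracted from the factor $e^{-||v|^2-|\eta|^2|^2/(16|v-\eta|^2)}$ as you say. Without some such step your sketch does not produce $m^{\ga-1}$. A smaller slip: your rationale for the constant $16$ in \eqref{Prol2} is backwards. Since $e^{-|v-\eta|^2/16}$ decays more slowly than $e^{-|v-\eta|^2/8}$, the bound \eqref{Prol2} is \emph{weaker} than the kernel bound of Lemma \ref{K} and follows from it together with $0\le 1-\chi_m\le 1$; the support of $1-\chi_m$ plays no role there, only in \eqref{Prol1}.
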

Now we can show the linear operator $L_\de$ has similar properties as above. First we will simplify the explicit form of $L_\de f$. The following lemma is based on direct calculations.
\begin{lemma}\label{muequi}
Let $0\leq\de\leq1$ and $\rho>0$. Let $\mu_{\de,\rho}$, $\bar{\mu}_{\de,\rho}$ and $\mu_0$ be defined in \eqref{Defmuderho}, \eqref{Defbarmu} and \eqref{Defmu0}, respectively, then we have
\begin{align*}
 C_{1\rho}\mu_0(u)\leq\mu_{\de,\rho}(u)-\de\mu_{\de,\rho}(u)\mu_{\de,\rho}(u')-\de\mu_{\de,\rho}(u)\mu_{\de,\rho}(v')+\de\mu_{\de,\rho}(u')\mu_{\de,\rho}(v')\leq C_{2\rho}\mu_0(u),\\
\sqrt{\frac{\bar{\mu}_{\de,\rho}(u)}{\bar{\mu}_{\de,\rho}(v)}}\left[\mu_{\de,\rho}(v)-\de\mu_{\de,\rho}(u)\mu_{\de,\rho}(u')-\de\mu_{\de,\rho}(u)\mu_{\de,\rho}(v')+\de\mu_{\de,\rho}(u')\mu_{\de,\rho}(v')\right]\leq C_{2\rho}\sqrt{\mu_0(u)\mu_0(v)},\\
\sqrt{\frac{\bar{\mu}_{\de,\rho}(u')}{\bar{\mu}_{\de,\rho}(v)}}\left[\mu_{\de,\rho}(v')-\de\mu_{\de,\rho}(v')\mu_{\de,\rho}(u)-\de\mu_{\de,\rho}(v')\mu_{\de,\rho}(v)+\de\mu_{\de,\rho}(u)\mu_{\de,\rho}(v)\right]\leq C_{2\rho}\sqrt{\mu_0(u)\mu_0(v')},\\
\sqrt{\frac{\bar{\mu}_{\de,\rho}(v')}{\bar{\mu}_{\de,\rho}(v)}}\left[\mu_{\de,\rho}(u')-\de\mu_{\de,\rho}(u')\mu_{\de,\rho}(u)-\de\mu_{\de,\rho}(u')\mu_{\de,\rho}(v)+\de\mu_{\de,\rho}(u)\mu_{\de,\rho}(v)\right]\leq C_{2\rho}\sqrt{\mu_0(u)\mu_0(u')},
\end{align*}
where 
\begin{align}\label{DefC12rho}
C_{1\rho}=\frac{\rho^2}{\left(\rho+1\right)^3},\ C_{2\rho}=\frac{\rho+1}{\rho^2}.
\end{align}
\end{lemma}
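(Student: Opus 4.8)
The plan is to reduce everything to two exact algebraic facts together with conservation of energy, so I would first set up the bookkeeping. From $\mu_{\de,\rho}(v)=1/(\de+\rho e^{|v|^2/2})$ one reads off the exact identities $\mu_{\de,\rho}(v)=\tfrac1\rho\mu_0(v)(1-\de\mu_{\de,\rho}(v))$ and $\sqrt{\bar\mu_{\de,\rho}(v)}=\tfrac1{\sqrt\rho}\sqrt{\mu_0(v)}\,(1-\de\mu_{\de,\rho}(v))$, the elementary bounds $0\le\de\mu_{\de,\rho}(v)<1$ with $1-\de\mu_{\de,\rho}(v)=\rho e^{|v|^2/2}/(\de+\rho e^{|v|^2/2})\in[\tfrac{\rho}{\rho+1},1)$, together with $\de\mu_0(v)\le1$ and $\mu_{\de,\rho}(v)\le\mu_0(v)/\rho$; and finally $\mu_0(u')\mu_0(v')=\mu_0(u)\mu_0(v)$, which is just the conservation of energy $|u'|^2+|v'|^2=|u|^2+|v|^2$. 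The second fact is the purely algebraic identity $a-\de ab-\de ac+\de bc=a(1-\de b)(1-\de c)+\de bc(1-\de a)$, valid for all reals: each of the four brackets in the lemma reduces to this shape, with $a$ the coefficient of the constant term and $b,c$ the values of $\mu_{\de,\rho}$ at the two variables entering the $\de$-linear terms.

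For the first inequality (the one controlling $\nu_\de$) I would take $a=\mu_{\de,\rho}(u)$, $b=\mu_{\de,\rho}(u')$, $c=\mu_{\de,\rho}(v')$, then substitute $\mu_{\de,\rho}=\tfrac1\rho\mu_0(1-\de\mu_{\de,\rho})$ in every factor and collapse $\mu_0(u')\mu_0(v')=\mu_0(u)\mu_0(v)$; the bracket becomes exactly $\mu_{\de,\rho}(u)\,(1-\de\mu_{\de,\rho}(u'))(1-\de\mu_{\de,\rho}(v'))(1+\tfrac\de\rho\mu_0(v))$. The upper bound is then immediate from $\mu_{\de,\rho}(u)\le\mu_0(u)/\rho$, $(1-\de\mu_{\de,\rho})\le1$, and $1+\tfrac\de\rho\mu_0(v)\le1+\tfrac1\rho=\tfrac{\rho+1}\rho$, giving $C_{2\rho}=\tfrac{\rho+1}{\rho^2}$; the lower bound from $\mu_{\de,\rho}(u)\ge\mu_0(u)/(\rho+1)$, $(1-\de\mu_{\de,\rho})\ge\tfrac{\rho}{\rho+1}$, and $1+\tfrac\de\rho\mu_0(v)\ge1$, giving $C_{1\rho}=\tfrac{\rho^2}{(\rho+1)^3}$. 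The three remaining inequalities — the brackets making up $K_\de$ — are treated identically and only an upper bound is needed. After the same factorization and substitution, each such bracket becomes $\tfrac1\rho(1-\de\mu_{\de,\rho}(v))$ times two further $(1-\de\mu_{\de,\rho})$ factors, times a single $\mu_0$ at the bracket's ``diagonal'' argument, times one factor $1+\tfrac\de\rho\mu_0(\cdot)$. Multiplying by the attached weight $\sqrt{\bar\mu_{\de,\rho}(u)}/\sqrt{\bar\mu_{\de,\rho}(v)}$, $\sqrt{\bar\mu_{\de,\rho}(u')}/\sqrt{\bar\mu_{\de,\rho}(v)}$, or $\sqrt{\bar\mu_{\de,\rho}(v')}/\sqrt{\bar\mu_{\de,\rho}(v)}$ respectively, and expanding both radicals through $\sqrt{\bar\mu_{\de,\rho}}=\tfrac1{\sqrt\rho}\sqrt{\mu_0}\,(1-\de\mu_{\de,\rho})$, the factor $1-\de\mu_{\de,\rho}(v)$ inside the bracket cancels the one produced by $1/\sqrt{\bar\mu_{\de,\rho}(v)}$. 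What remains is $\tfrac1\rho$ times a pure ratio of $\mu_0$'s — which conservation of energy turns into precisely the two-fold Maxwellian product asserted on the right — times $(1-\de\mu_{\de,\rho})$ factors each $\le1$ and a factor $1+\tfrac\de\rho\mu_0(\cdot)\le\tfrac{\rho+1}\rho$, so the left side is at most $\tfrac1\rho\cdot\tfrac{\rho+1}\rho=C_{2\rho}$ times that product.

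The arithmetic is short; what takes thought is spotting this structure — the factorization of each bracket, the observation that the hidden factor $1-\de\mu_{\de,\rho}(v)$ inside the bracket exactly cancels the one coming from the denominator weight, and the bookkeeping (via $\mu_0(u')\mu_0(v')=\mu_0(u)\mu_0(v)$) of which two-fold Maxwellian product each bracket collapses onto, where the asymmetric roles of $u,v,u',v'$ must be followed with care. Once that is in place, every estimate is a one-line consequence of $0\le\de\mu_{\de,\rho}\le1$ and $\de\mu_0\le1$; this is exactly the point of the lemma, namely that the quantum corrections are uniformly harmless for $\de\in[0,1]$ and that all the resulting $\de$-uniform constants depend on $\rho$ only through $C_{1\rho}$ and $C_{2\rho}$.
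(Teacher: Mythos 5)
Your proof is correct and follows essentially the same route as the paper: a direct algebraic computation in which energy conservation $|u'|^2+|v'|^2=|u|^2+|v|^2$ collapses the product $\mu_0(u')\mu_0(v')$ to $\mu_0(u)\mu_0(v)$, followed by the elementary bounds $0\le\de\mu_{\de,\rho}\le 1$ and $\tfrac{1}{\rho+1}\mu_0\le\mu_{\de,\rho}\le\tfrac1\rho\mu_0$; your factorization $a-\de ab-\de ac+\de bc=a(1-\de b)(1-\de c)+\de bc(1-\de a)$ is just a cleaner packaging of the paper's single-fraction simplification and yields the identical constants. One caveat: your identification "$a=$ the constant term'' implicitly reads the second bracket as $\mu_{\de,\rho}(v)-\de\mu_{\de,\rho}(v)\mu_{\de,\rho}(u')-\de\mu_{\de,\rho}(v)\mu_{\de,\rho}(v')+\de\mu_{\de,\rho}(u')\mu_{\de,\rho}(v')$, which is what appears in the definition \eqref{DefKde} and what the paper's own displayed identity actually proves, rather than the version with $\mu_{\de,\rho}(u)$ in the cross terms printed in the lemma statement — that discrepancy is a typo in the statement, and both you and the paper silently prove the corrected form.
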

\begin{proof}
We prove the first two inequalities and the rest two can be treated similarly. By the definition of $\mu_{\de,\rho}$ and $\bar{\mu}_{\de,\rho}$ in \eqref{Defmuderho} and \eqref{Defbarmu} and the fact that $ |u'|^2+|v'|^2=|u|^2+|v|^2$, we have
\begin{align*}
\dis &\mu_{\de,\rho}(u)-\de\mu_{\de,\rho}(u)\mu_{\de,\rho}(u')-\de\mu_{\de,\rho}(u)\mu_{\de,\rho}(v')+\de\mu_{\de,\rho}(u')\mu_{\de,\rho}(v')\notag\\
&=\frac{\rho e^{\frac{|u|^2}{2}}\left(\de+\rho e^{\frac{|v|^2}{2}} \right)}{\left(\de+\rho e^{\frac{|u'|^2}{2}} \right)\left(\de+\rho e^{\frac{|v'|^2}{2}} \right)\left(\de+\rho e^{\frac{|u|^2}{2}} \right)}.
\end{align*}
Then the first estimate holds. Then by a similar argument, it follows that
 \begin{align*}
\dis &\sqrt{\frac{\bar{\mu}_{\de,\rho}(u)}{\bar{\mu}_{\de,\rho}(v)}}\left[\mu_{\de,\rho}(v)-\de\mu_{\de,\rho}(u)\mu_{\de,\rho}(u')-\de\mu_{\de,\rho}(u)\mu_{\de,\rho}(v')+\de\mu_{\de,\rho}(u')\mu_{\de,\rho}(v')\right]\notag\\
&=\frac{\rho e^{\frac{|v|^2}{2}}e^{\frac{|u|^2}{4}}} {\left(\de+\rho e^{\frac{|u'|^2}{2}} \right)\left(\de+\rho e^{ \frac{|v'|^2}{2}} \right)e^{\frac{|v|^2}{4}}}\leq \frac{1}{\rho}e^{-\frac{|u|^2}{4}-\frac{|v|^2}{4}}\leq C_{2\rho}\sqrt{\mu_0(u)\mu_0(v)}.
\end{align*}
This ends the proof of Lemma \ref{muequi}.
\end{proof}

We can separate $K_\de$ into two parts as \eqref{sepaK} with the same cut-off function $\chi_m=\chi_m(\tau)$ as in \eqref{Km}. Let $K^m_\de$ be given by
\begin{align}\label{DefKmde}
K^m_{\de}f(v)&=\int_{\R^3}\int_{\S^2}B(v-u,\theta)\chi_m(|v-u|)\notag\\
&\left[\frac{\sqrt{\bar{\mu}_{\de,\rho}(u')}}{\sqrt{\bar{\mu}_{\de,\rho}(v)}}\left[ \mu_{\de,\rho}(v')-\de\mu_{\de,\rho}(v')\mu_{\de,\rho}(u)-\de\mu_{\de,\rho}(v')\mu_{\de,\rho}(v)+\de\mu_{\de,\rho}(u)\mu_{\de,\rho}(v) \right]f(u')\right.\notag\\
&\left.\frac{\sqrt{\bar{\mu}_{\de,\rho}(v')}}{\sqrt{\bar{\mu}_{\de,\rho}(v)}} \left[ \mu_{\de,\rho}(u')-\de\mu_{\de,\rho}(u')\mu_{\de,\rho}(u)-\de\mu_{\de,\rho}(u')\mu_{\de,\rho}(v)+\de\mu_{\de,\rho}(u)\mu_{\de,\rho}(v) \right]f(v')\right.\notag\\
&\left.\frac{\sqrt{\bar{\mu}_{\de,\rho}(u)}}{\sqrt{\bar{\mu}_{\de,\rho}(v)}} \left[ \mu_{\de,\rho}(v)-\de\mu_{\de,\rho}(v)\mu_{\de,\rho}(u')-\de\mu_{\de,\rho}(v)\mu_{\de,\rho}(v')+\de\mu_{\de,\rho}(u')\mu_{\de,\rho}(v') \right]f(u)\right]d\om du,
\end{align}
and write
\begin{align}\label{DefKcde}
K^c_0=K_0-K_0^m.
\end{align}
With the above definitions and lemmas, we can control $L_\de$ by $L_0$ in terms of the lemma below. 
\begin{lemma}\label{KdeK0}
Recall that the operators $\nu_\de$, $K^m_\de$ and $K^c_\de$ are defined in \eqref{Defnude}, \eqref{DefKmde} and \eqref{DefKcde} respectively, and the constants $C_{1\rho}$, $C_{2\rho}$ are defined in \eqref{DefC12rho}. Let $l=l(v,\eta)$, $k=k(v,\eta)$  be the same functions as in Lemma \ref{KmKc} and Lemma \ref{K}. Then for given function $f=f(v)$, there exists a constant $C>0$ which is independent of $\rho$ such that
\begin{align*}
\frac{1}{C} C_{1\rho}(1+|v|)^\ga\leq \nu_\de(v) \leq C C_{2\rho}(1+|v|)^\ga,\\
|K^m_\de f(v)|\leq Cm^{3+\ga} C_{2\rho}e^{-\frac{|v|^2}{20}}\|f\|_{L^\infty_v},\\
\left|K_\de^cf(v)\right|\leq C_{2\rho}\int_{\R^3}\left|l(v,\eta)\right|\left|f(\eta)\right|d\eta,
\end{align*}
and
\begin{align*}
\left|K_\de f(v)\right|\leq C_{2\rho}\int_{\R^3}\left|k(v,\eta)\right|\left|f(\eta)\right|d\eta.
\end{align*}
\end{lemma}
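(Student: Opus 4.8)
\textbf{Proof proposal for Lemma \ref{KdeK0}.}

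The plan is to reduce each estimate on the $\de$-dependent operators to the corresponding estimate on the classical operators $\nu$, $K_0^m$, $K_0^c$, $K_0$ collected in Lemmas \ref{K} and \ref{KmKc}, using the pointwise comparison bounds of Lemma \ref{muequi} as the sole input that carries the $\rho$-dependence. The key observation is structural: each of $\nu_\de$, $K_\de^m$, $K_\de^c$, $K_\de$ is, termwise, of exactly the same shape as its classical counterpart, except that the products of Maxwellian factors such as $\mu_0(u)$ or $\sqrt{\mu_0(u)\mu_0(v)}$ are replaced by the bracketed quantum expressions appearing in \eqref{Defnude}, \eqref{DefKmde}, \eqref{DefKde}. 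Lemma \ref{muequi} says precisely that each such quantum bracket (with its accompanying square-root ratio of $\bar\mu_{\de,\rho}$'s) is squeezed between a constant multiple of the matching classical Maxwellian factor; e.g.\ the first displayed inequality of Lemma \ref{muequi} gives $C_{1\rho}\mu_0(u)\leq[\cdots]\leq C_{2\rho}\mu_0(u)$, and the remaining three control the weighted kernel factors by $C_{2\rho}\sqrt{\mu_0(u)\mu_0(v)}$, $C_{2\rho}\sqrt{\mu_0(u)\mu_0(v')}$, $C_{2\rho}\sqrt{\mu_0(u)\mu_0(u')}$ respectively.

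I would carry out the four estimates in order. First, for $\nu_\de(v)$: insert the two-sided bound of Lemma \ref{muequi} inside the integral \eqref{Defnude} and pull out $C_{1\rho}$ (resp.\ $C_{2\rho}$), leaving exactly $\int B(v-u,\theta)\mu_0(u)\,d\om du=\nu(v)\sim(1+|v|)^\ga$ from the Preliminaries; this yields the claimed $\tfrac1C C_{1\rho}(1+|v|)^\ga\leq\nu_\de(v)\leq CC_{2\rho}(1+|v|)^\ga$. Second, for $K_\de^m$: take absolute values inside \eqref{DefKmde}, bound the three bracketed quantum factors (each already carrying its $\sqrt{\bar\mu_{\de,\rho}}$ ratio) by $C_{2\rho}$ times the corresponding classical factor $\sqrt{\mu_0(v')}\mu_0(u')/\sqrt{\mu_0(v)}$, etc.\ — here I should note the minor bookkeeping that the quantum brackets in $K_\de^m$ match those of Lemma \ref{muequi} after the standard exchange $u'\leftrightarrow v'$ and $u\leftrightarrow v$ symmetries of the collision geometry \eqref{velocity}, so the third and fourth inequalities of Lemma \ref{muequi} apply — and then invoke \eqref{BKm} to get $Cm^{3+\ga}C_{2\rho}e^{-|v|^2/20}\|f\|_{L^\infty_v}$. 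Third, for $K_\de^c=K_\de-K_\de^m$: the same termwise domination by $C_{2\rho}$ times the classical integrand, combined with the kernel representation in \eqref{Kc0es}, gives $|K_\de^c f(v)|\leq C_{2\rho}\int_{\R^3}|l(v,\eta)||f(\eta)|\,d\eta$. Fourth, for $K_\de$ itself: either add the bounds for $K_\de^m$ and $K_\de^c$ (absorbing the $m^{3+\ga}e^{-|v|^2/20}$ contribution into $\int|k(v,\eta)||f(\eta)|\,d\eta$ via Lemma \ref{K}), or more directly dominate \eqref{DefKde} termwise by $C_{2\rho}$ times the classical integrand of $K_0$ and apply the kernel representation of Lemma \ref{K} to obtain $|K_\de f(v)|\leq C_{2\rho}\int_{\R^3}|k(v,\eta)||f(\eta)|\,d\eta$.

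The main obstacle is not analytic depth but careful matching: one must check that the seven quantum bracket expressions distributed across \eqref{Defnude}, \eqref{DefKmde}, \eqref{DefKde} really do fall, after the appropriate relabeling dictated by the collision rules \eqref{velocity}, under one of the four cases proved in Lemma \ref{muequi}, and that the attached $\sqrt{\bar\mu_{\de,\rho}}$ ratios are grouped with the right bracket so that Lemma \ref{muequi} is applicable verbatim. A secondary point is to verify that the constant $C$ in the statement can be taken genuinely independent of $\rho$: this holds because, once $C_{1\rho}$ and $C_{2\rho}$ are factored out, what remains are the $\rho$-free classical estimates $\nu(v)\sim(1+|v|)^\ga$, \eqref{BKm}, \eqref{Kc0es}, and Lemma \ref{K}, whose implicit constants depend only on $\ga$ (and $\be$). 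Finally, for the lower bound on $\nu_\de$ one uses that the integrand in \eqref{Defnude} is nonnegative — which is itself visible from the explicit positive formula $\rho e^{|u|^2/2}(\de+\rho e^{|v|^2/2})/[(\de+\rho e^{|u'|^2/2})(\de+\rho e^{|v'|^2/2})(\de+\rho e^{|u|^2/2})]$ derived in the proof of Lemma \ref{muequi} — so that the lower Maxwellian bound may be integrated without sign issues.
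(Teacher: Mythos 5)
Your proposal is correct and follows essentially the same route as the paper: dominate each quantum bracket (together with its $\sqrt{\bar\mu_{\de,\rho}}$ ratio) by $C_{1\rho}$ or $C_{2\rho}$ times the corresponding classical Maxwellian factor via Lemma \ref{muequi}, use the identities $\sqrt{\mu_0(u)\mu_0(v')}=\tfrac{\sqrt{\mu_0(u')}}{\sqrt{\mu_0(v)}}\mu_0(v')$ and $\sqrt{\mu_0(u)\mu_0(u')}=\tfrac{\sqrt{\mu_0(v')}}{\sqrt{\mu_0(v)}}\mu_0(u')$ to recognize the classical integrands, and conclude with \eqref{BKm}, \eqref{Kc0es}, Lemma \ref{K} and $\int B\mu_0\,d\om du\sim(1+|v|)^\ga$. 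No gaps.
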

\begin{proof}
By Lemma \ref{muequi} we have
\begin{align*}
C_{1\rho}\int_{\R^3}\int_{\S^2}B(v-u,\theta)\mu_0(u)d\om du\leq \nu_\de(v)\leq C_{2\rho}\int_{\R^3}\int_{\S^2}B(v-u,\theta)\mu_0(u)d\om du,
\end{align*}
and
\begin{align*}
\left|K^m_{\de}f(v)\right|\leq &C_{2\rho}\int_{\R^3}\int_{\S^2}B(v-u,\theta)\chi_m(|v-u|)\notag\\
&\left( \frac{\sqrt{\mu_{0}(v')}}{\sqrt{\mu_{0}(v)}}\mu_{0}(u')\left|f(v')\right|+\frac{\sqrt{\mu_{0}(u')}}{\sqrt{\mu_{0}(v)}}\mu_{0}(v')\left|f(u') \right|
+\sqrt{\mu_0(u)}\sqrt{\mu_0(v)}\left|f(u)\right| \right)d\omega du,\\
\left|K^c_{\de}f(v)\right|\leq &C_{2\rho}\int_{\R^3}\int_{\S^2}B(v-u,\theta)\left(1-\chi_m(|v-u|)\right)\notag\\
&\left( \frac{\sqrt{\mu_{0}(v')}}{\sqrt{\mu_{0}(v)}}\mu_{0}(u')\left|f(v')\right|+\frac{\sqrt{\mu_{0}(u')}}{\sqrt{\mu_{0}(v)}}\mu_{0}(v')\left|f(u') \right|
+\sqrt{\mu_0(u)}\sqrt{\mu_0(v)}\left|f(u)\right| \right)d\omega du.
\end{align*}
In the last two inequalities above, we have used the fact
$$
\sqrt{\mu_0(u)\mu_0(v')}=\frac{\sqrt{\mu_{0}(u')}}{\sqrt{\mu_{0}(v)}}\mu_{0}(v'), \quad \sqrt{\mu_0(u)\mu_0(u')}=\frac{\sqrt{\mu_{0}(v')}}{\sqrt{\mu_{0}(v)}}\mu_{0}(u').
$$
Then by \eqref{BKm}, \eqref{Kc0es} and the fact that $\frac{1}{C} (1+|v|)^\ga\leq\int_{\R^3}\int_{\S^2}B(v-u,\theta)\mu_0(u)d\om du\leq C (1+|v|)^\ga$ for some constant $C>0$ which is independent of $\rho$, the proof is complete.
\end{proof}

Recalling our definition of $\CE_{\de,\rho}$ in \eqref{DefCE}, we have the following lemma which will be used later.
\begin{lemma}\label{Taylor}
Let $F(t,x,v)$ satisfy \eqref{M}, \eqref{E} and \eqref{H}, then we have
\begin{align*}
\int_{\Omega}\int_{\R^3}&\left\{\frac{|F(t,x,v)-\mu_{\de,\rho}(v)|^2}{4\mu_{\de,\rho}(v)}\chi_{\{ |F(t,x,v)-\mu_{\de,\rho}(v)|\leq \mu_{\de,\rho}(v) \}}\right.\notag\\
&\qquad\qquad\qquad \left. + \frac{|F(t,x,v)-\mu_{\de,\rho}(v)|}{4}\chi_{\{ |F(t,x,v)-\mu_{\de,\rho}(v)|\geq \mu_{\de,\rho}(v) \}}\right\}dv dx\leq \CE_{\de,\rho}(F_0).%label{taylor},
\end{align*}
\end{lemma}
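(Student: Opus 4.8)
The plan is to exploit that $\mu_{\de,\rho}$ is precisely the Fermi--Dirac equilibrium associated with the entropy density occurring in \eqref{H}: once the conserved defect mass and energy are absorbed, $\CE_{\de,\rho}(F(t))$ becomes exactly the $(x,v)$--integral of the Bregman divergence of that density between $F(t,x,v)$ and $\mu_{\de,\rho}(v)$. Granting this, the lemma will follow from the entropy inequality \eqref{H} together with a pointwise, Csisz\'ar--Kullback--Pinsker--type lower bound on the divergence, split according to whether $|F-\mu_{\de,\rho}|\le\mu_{\de,\rho}$ or $|F-\mu_{\de,\rho}|\ge\mu_{\de,\rho}$.

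First I would set $h(y):=y\log y+\tfrac1\de(1-\de y)\log(1-\de y)$ for $0\le y\le1/\de$ and compute $h'(y)=\log\tfrac{y}{1-\de y}$ and $h''(y)=\tfrac{1}{y(1-\de y)}>0$. Since $\mu_{\de,\rho}(v)=\tfrac{1}{\de+\rho e^{|v|^2/2}}$ gives $\tfrac{\mu_{\de,\rho}(v)}{1-\de\mu_{\de,\rho}(v)}=\tfrac1\rho e^{-|v|^2/2}$, one obtains the key identity $h'(\mu_{\de,\rho}(v))=-\log\rho-\tfrac{|v|^2}{2}$, i.e.\ $\mu_{\de,\rho}(v)$ minimizes $y\mapsto h(y)+(\log\rho+\tfrac{|v|^2}{2})y$. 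Rewriting $(\log\rho)M_0+\tfrac12E_0$ with the conservation laws \eqref{M}, \eqref{E} as $\int_\Omega\int_{\R^3}(\log\rho+\tfrac{|v|^2}{2})(F(t,x,v)-\mu_{\de,\rho}(v))\,dvdx$ and recalling \eqref{DefCE}, this identity turns $\CE_{\de,\rho}(F(t))$ into
\begin{align*}
\CE_{\de,\rho}(F(t))=\int_\Omega\int_{\R^3}\Big\{h\big(F(t,x,v)\big)-h\big(\mu_{\de,\rho}(v)\big)-h'\big(\mu_{\de,\rho}(v)\big)\big(F(t,x,v)-\mu_{\de,\rho}(v)\big)\Big\}\,dvdx,
\end{align*}
whose integrand is nonnegative by convexity of $h$. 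The same identity used at $t=0$, together with conservation of $M_0$, $E_0$ and the monotonicity $\CH_{\de,\rho}(F(t))\le\CH_{\de,\rho}(F(0))$ from \eqref{H}, also gives $\CE_{\de,\rho}(F(t))\le\CE_{\de,\rho}(F_0)$.

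It then suffices to prove, for $0\le F\le1/\de$ and $0<\mu<1/\de$ (writing $\mu$ for $\mu_{\de,\rho}(v)$), the pointwise bound
\begin{align*}
h(F)-h(\mu)-h'(\mu)(F-\mu)\ \ge\ \frac{(F-\mu)^2}{4\mu}\,\ind_{\{|F-\mu|\le\mu\}}+\frac{|F-\mu|}{4}\,\ind_{\{|F-\mu|\ge\mu\}},
\end{align*}
after which integration in $(x,v)$ and the previous paragraph conclude. I would write the left side as the Taylor remainder $\int_\mu^F\!\int_\mu^s h''(r)\,dr\,ds$ and use $h''(r)=\tfrac{1}{r(1-\de r)}\ge\tfrac1r$. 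If $|F-\mu|\le\mu$, then every $r$ between $F$ and $\mu$ obeys $r\le2\mu$, hence $h''(r)\ge\tfrac1{2\mu}$, giving $\int_\mu^F\!\int_\mu^s h''(r)\,dr\,ds\ge\tfrac1{2\mu}\cdot\tfrac{(F-\mu)^2}{2}=\tfrac{(F-\mu)^2}{4\mu}$. If $|F-\mu|\ge\mu$, then $F\ge2\mu$ (the alternative $F=0$ being immediate, the left side then being $\ge\mu\ge\tfrac14|F-\mu|$ by $-\log(1-\de\mu)\ge\de\mu$), and from $h'(s)-h'(\mu)\ge\int_\mu^s\tfrac{dr}{r}=\log(s/\mu)$ for $s\ge\mu$ one gets $h(F)-h(\mu)-h'(\mu)(F-\mu)\ge\int_\mu^F\log(s/\mu)\,ds=\mu(z\log z-z+1)$ with $z:=F/\mu\ge2$; the scalar inequality $z\log z-z+1\ge\tfrac14(z-1)$ for $z\ge2$ (it holds at $z=2$ since $2\log2-1>\tfrac14$, and afterwards the left side has the larger slope $\log z\ge\log2>\tfrac14$) then yields $\tfrac14(F-\mu)=\tfrac14|F-\mu|$.

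I expect the only mildly delicate part to be the constant chasing in the large-amplitude regime — verifying $z\log z-z+1\ge\tfrac14(z-1)$ for $z\ge2$, i.e.\ that the factor $\tfrac14$ is small enough — and, since the paper insists on estimates uniform in the quantum parameter, making sure that neither the bound $h''\ge\tfrac1r$ nor the identity for $h'(\mu_{\de,\rho})$ carries any hidden $\de$-dependence, so that the resulting estimate holds uniformly for $0<\de\le1$ and in that form can be fed into the global estimates of Section~4.
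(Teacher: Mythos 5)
Your proof is correct and follows essentially the same route as the paper: a second-order expansion of the relative entropy around $\mu_{\de,\rho}$, absorption of the linear term via the conservation laws and the identity $\log\frac{\mu_{\de,\rho}}{1-\de\mu_{\de,\rho}}=-\log\rho-\frac{|v|^2}{2}$, and the same case split at $|F-\mu_{\de,\rho}|=\mu_{\de,\rho}$ with the same constant $\tfrac14$. The only (cosmetic) difference is that you use the integral form of the Taylor remainder plus explicit scalar inequalities where the paper uses the Lagrange form with an intermediate point $\bar F$, which if anything handles the endpoint $F=0$ more cleanly.
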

\begin{proof}
By the entropy inequality \eqref{H} and Taylor expansion, it follows that
\begin{align*}
\CH_{\de,\rho}(F(t))=\int_{\Omega}\int_{\R^3}& \left\{F(t,x,v)\log{F}(t,x,v)+\frac{1}{\de}(1-\de F(t,x,v))\log(1-\de{F}(t,x,v))\right.\notag\\
                                &\qquad \left.-\mu_{\de,\rho}(v)\log{\mu_{\de,\rho}(v)}-\frac{1}{\de}(1-\de\mu_{\de,\rho}(v))\log(1-\de\mu_{\de,\rho}(v))\right\}dvdx\notag\\
=\int_{\Omega}\int_{\R^3} &\left\{\log\frac{\mu_{\de,\rho}(v)}{1-\de \mu_{\de,\rho}(v)}\left(F(t,x,v)-\mu_{\de,\rho}(v)\right)\right.\notag\\
                                &\qquad \left.+\frac{1}{2}\left(\frac{1}{\bar{F}}+\frac{\de}{1-\de \bar{F}} \right)\left|F(t,x,v)-\mu_{\de,\rho}(v) \right|^2 \right\}dvdx\leq \CH_{\de,\rho}(F(0)),
\end{align*}
where $\bar{F}$ is between $F$ and $\mu_{\de,\rho}$. Since $1-\de F\geq 0$ and $1-\de \mu_{\de,\rho}\geq 0$, we have $1-\de \bar{F}\geq 0$. By a direct calculation, we also obtain 
$$
\log\frac{\mu_{\de,\rho}(v)}{1-\de \mu_{\de,\rho}(v)}=-\log \rho-\frac{|v|^2}{2}.
$$
 Then it holds that 
\begin{align*}
&\int_{\Omega}\int_{\R^3}\frac{1}{2\bar{F}}\left|F(t,x,v)-\mu_{\de,\rho}(v) \right|^2 dvdx\leq \int_{\Omega}\int_{\R^3}\left\{\frac{1}{2}\left(\frac{1}{\bar{F}}+\frac{\de}{1-\de \bar{F}} \right)\left|F(t,x,v)-\mu_{\de,\rho}(v) \right|^2 \right\}dvdx\notag\\
&\leq \CH_{\de,\rho}(F(0))+\log{\rho}\int_{\Omega}\int_{\R^3}\left(F(t,x,v)-\mu_{\de,\rho}(v)\right)dvdx+\frac{|v|^2}{2}\int_{\Omega}\int_{\R^3}\left(F(t,x,v)-\mu_{\de,\rho}(v)\right)dvdx\notag\\
&=\CE_{\de,\rho}(F_0).
\end{align*}
In the last equality above, we have used the conservation of mass and energy in \eqref{M} and \eqref{E}. Noticing that $F\geq 0$, then $|F-\mu_{\de,\rho}|\geq 0$ implies $F\geq 2\mu_{\de,\rho}$ or $F=0$. Then for $F\geq 2\mu_{\de,\rho}$, we have
$$
\frac{\left|F(t,x,v)-\mu_{\de,\rho}(v)\right|}{2\bar{F}}\geq \frac{F(t,x,v)-\mu_{\de,\rho}(v)}{2F}=\frac{1}{2}-\frac{\mu_{\de,\rho}}{2F}\geq\frac{1}{4}.
$$
For $F=0$,
$$
\frac{\left|F(t,x,v)-\mu_{\de,\rho}(v)\right|}{2\bar{F}}\geq \frac{\mu_{\de,\rho}(v)}{2\mu_{\de,\rho}}\geq\frac{1}{4}.
$$
Then the lemma follows from the computations above.
\end{proof}

\begin{remark}
In Lemma \ref{Taylor},  if we let $\rho=(2\pi)^\frac{3}{2}$, $\de\rightarrow0$, then it holds that
\begin{align*}
\CH_{\de,\rho}(F(t))&=\int_{\Omega}\int_{\R^3} \left\{F(t,x,v)\log{F}(t,x,v)-F(t,x,v)-\mu_{\de,\rho}(v)\log{\mu_{\de,\rho}(v)}+\mu_{\de,\rho}(v)\right\}dvdx\notag\\
&=\int_{\Omega}\int_{\R^3} \left\{F(t,x,v)\log{F}(t,x,v)-\mu_{\de,\rho}(v)\log{\mu_{\de,\rho}(v)}\right\}dvdx-M_0.
\end{align*}
$\CE_{\de,\rho}(F_0)$ will become
\begin{align*}
\CE_{\de,\rho}(F_0)&=\int_{\Omega}\int_{\R^3} \left\{F(t,x,v)\log{F}(t,x,v)-\mu_{\de,\rho}(v)\log{\mu_{\de,\rho}(v)}\right\}dvdx+(\frac{3}{2}\log 2\pi-1)M_0+\frac{1}{2}E_0.
\end{align*}
This is consistent with Lemma 2.7 in \cite{DHWY}.
\end{remark}

\section{Approximation Sequence and Local Existence}
In this section we prove Theorem \ref{local}. We first construct the approximation sequence $\{F^n \}^{\infty}_{n=1}$ from \eqref{QBE} as follows:
\begin{align}\label{approseq}
\dis \pa_tF^{n+1}+v\cdot \na_x F^{n+1}=\int_{\R^3}\int_{\S^2}B(v-&u,\theta)\left[ F^n(u')F^n(v')\left(1-\de F^n(u) \right)\left(1-\de F^{n+1}(v) \right)\right. \notag\\
 & \left. -F^n(u)F^{n+1}(v)\left(1-\de F^n(u') \right) \left(1-\de F^n(v') \right)\right]d\omega du,
\end{align}
with $F^{n}(0,x,v)=F_0(x,v)$ and $F^0(t,x,v)=0$.
We will prove that under the assumptions in Theorem \ref{local}, our approximation sequence $F^n=\mu_{\de,\rho}+\sqrt{\bar{\mu}_{\de,\rho}}f^n$ satisfies the pointwise bound $0\leq F^n \leq \frac{1}{\de}$. Moreover, for $T_1$ defined in \eqref{T_1}, it holds that
\begin{align*}
\left\|w_\be f^n %\frac{F-\mu}{\sqrt \mu}
\right\|_{{L^{\infty}_{T_1}L^{\infty}_{v,x}}}\leq 2\left\|w_\be f_0 %\frac{F_0-\mu}{\sqrt \mu}
\right\|_{L^{\infty}_{v,x}},
\end{align*}
and $\{w_\be f^n\}_{n=1}^\infty$ is a Cauchy sequence in $L^\infty([0,T_1]\times \Omega \times \R^3)$ with a unique limit. By passing the limit, we can obtain a function $f=\lim_{n\rightarrow \infty}f^n$ which is a mild solution to the quantum Boltzmann equation in the sense of \eqref{mildQBE}. We first prove the pointwise bound of $F^n$.

\begin{lemma}\label{positivity}
The sequence $\{F^n \}^{\infty}_{n=1}$ which is constructed in \eqref{approseq} satisfies $0\leq F^n \leq \frac{1}{\de}$ for $n=0,1,2,\cdots$.
\end{lemma}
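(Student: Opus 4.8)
The plan is to argue by induction on $n$. The base case $n=0$ is immediate since $F^0\equiv 0$ satisfies $0\leq F^0\leq 1/\de$. For the inductive step, suppose $0\leq F^n\leq 1/\de$ everywhere; we must show the same for $F^{n+1}$. The key observation is that the linear equation \eqref{approseq} for $F^{n+1}$, treating $F^n$ as known, is of the form $\pa_t F^{n+1}+v\cdot\na_x F^{n+1}+a^n(t,x,v)F^{n+1}=b^n(t,x,v)$, where by grouping the terms multiplying $F^{n+1}(v)$ and the terms not involving $F^{n+1}$ we read off
\begin{align*}
a^n(t,x,v)&=\int_{\R^3}\int_{\S^2}B(v-u,\theta)\left[\de F^n(u')F^n(v')(1-\de F^n(u))+F^n(u)(1-\de F^n(u'))(1-\de F^n(v'))\right]d\om du,\\
b^n(t,x,v)&=\int_{\R^3}\int_{\S^2}B(v-u,\theta)F^n(u')F^n(v')(1-\de F^n(u))\,d\om du.
\end{align*}
By the inductive hypothesis $0\leq F^n\leq 1/\de$, each bracketed factor $1-\de F^n(\cdot)$ is nonnegative and each $F^n(\cdot)$ is nonnegative, and $B\geq 0$, so $a^n\geq 0$ and $b^n\geq 0$ pointwise (and they are finite by the same bounds together with $\nu_0(v)<\infty$, using $B=|v-u|^\ga b(\ta)$ with $-3<\ga<0$ integrable near the origin).

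With this structure, I would use the Duhamel/characteristics representation
\begin{align*}
F^{n+1}(t,x,v)=e^{-\int_0^t a^n(\tau,x-v(t-\tau),v)\,d\tau}F_0(x-vt,v)+\int_0^t e^{-\int_s^t a^n(\tau,x-v(t-\tau),v)\,d\tau}\,b^n(s,x-v(t-s),v)\,ds.
\end{align*}
Since $a^n\geq 0$, $b^n\geq 0$, $F_0\geq 0$, every term on the right is nonnegative, giving $F^{n+1}\geq 0$. For the upper bound $F^{n+1}\leq 1/\de$, the cleanest route is to introduce $G^{n+1}:=1/\de-F^{n+1}$ and check that $G^{n+1}$ solves a linear transport equation of the same type, $\pa_t G^{n+1}+v\cdot\na_x G^{n+1}+\tilde a^n G^{n+1}=\tilde b^n$, with nonnegative coefficient $\tilde a^n$, nonnegative source $\tilde b^n$, and nonnegative initial datum $1/\de-F_0\geq 0$; then the same Duhamel formula yields $G^{n+1}\geq 0$. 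Concretely, subtracting \eqref{approseq} from $\pa_t(1/\de)+v\cdot\na_x(1/\de)=0$ and regrouping, one finds
\begin{align*}
\tilde a^n(t,x,v)&=\int_{\R^3}\int_{\S^2}B(v-u,\theta)\left[F^n(u)(1-\de F^n(u'))(1-\de F^n(v'))+\de F^n(u')F^n(v')(1-\de F^n(u))\right]d\om du,\\
\de\,\tilde b^n(t,x,v)&=\int_{\R^3}\int_{\S^2}B(v-u,\theta)F^n(u)(1-\de F^n(u'))(1-\de F^n(v'))\,d\om du,
\end{align*}
which are again nonnegative by the inductive hypothesis. (Note $\tilde a^n=a^n$; only the source changes.) Hence $G^{n+1}\geq 0$, i.e. $F^{n+1}\leq 1/\de$, completing the induction.

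The main point requiring care — though it is routine rather than deep — is the algebraic regrouping that exhibits \eqref{approseq} and its complement as transport equations with \emph{nonnegative} zeroth-order coefficient and \emph{nonnegative} source under the constraint $0\leq F^n\leq 1/\de$; one must be careful that the terms shuffled into $a^n$ versus $b^n$ are exactly those linear in $F^{n+1}(v)$ versus those independent of it, and that no sign is lost when passing to $G^{n+1}=1/\de-F^{n+1}$. A secondary technical point is justifying that the Duhamel representation along backward characteristics is valid and that the exponential integrating factors are finite, which follows from the local boundedness of $a^n$ in terms of $\|w_\be f^n\|_{L^\infty_{v,x}}$ (hence of $F^n$) and the integrability of the soft-potential kernel; these bounds are available once one knows, inductively, the $L^\infty$ control of $f^n$, but for the pointwise positivity/upper bound alone the qualitative finiteness of $a^n,b^n$ suffices. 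No smallness or largeness of $\rho$ or $\de$ enters here; the statement is purely a maximum-principle-type fact about the approximation scheme.
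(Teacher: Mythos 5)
Your proof is correct and follows essentially the same route as the paper: induction, regrouping \eqref{approseq} into a damped transport equation with nonnegative zeroth-order coefficient and nonnegative source, and applying the Duhamel representation to both $F^{n+1}$ and its complement (the paper uses $G^{n+1}=1-\de F^{n+1}$ rather than your $1/\de-F^{n+1}$, a purely cosmetic rescaling, and indeed finds the same coefficient $g_1^n=g_2^n$ that you observe as $\tilde a^n=a^n$).
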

\begin{proof}
When $n=0$, $0= F^0\leq 1$. We rewrite \eqref{approseq} as
 \begin{align}\label{reFn}
\dis \pa_tF^{n+1}+v\cdot \na_x F^{n+1}+g^n_1\cdot F^{n+1}   =\tilde{C}_1(F^n),
\end{align}
where
 \begin{align}\label{gn1}
\dis g^n_1(t,x,v)=\int_{\R^3}\int_{\S^2}B(v-u,\theta)&\left[\de F^n(t,x,u')F^n(t,x,v')\left(1-\de F^n(t,x,u) \right)\right. \notag\\
 &\quad \left.+F^n(t,x,u)\left(1-\de F^n(t,x,u') \right) \left(1-\de F^n(t,x,v') \right)\right]d\omega du,
\end{align} and
 \begin{align}\label{tildeC1}
\dis \tilde{C}_1(F^n)=\int_{\R^3}\int_{\S^2}B(v-u,\theta) F^n(u')F^n(v')\left(1-\de F^n(u) \right)d\omega du.
\end{align}
We define $G^n(t,x,v)=1-\de F^n(t,x,v)$. When $n=0$, $0\leq G^n(t,x,v)=1$. By direct calculation we have
 \begin{align}\label{EqG}
\dis &\pa_tG^{n+1}+v\cdot \na_x G^{n+1}=-\de\left( \pa_tF^{n+1}+v\cdot \na_x F^{n+1} \right)\notag\\
&=-\de \int_{\R^3}\int_{\S^2}B(v-u,\theta)\left[ F^n(u')F^n(v')G^n(u)G^{n+1}(v)-F^n(u)\frac{1-G^{n+1}(v)}{\de}G^n(u') G^n(v')\right]d\omega du\notag\\
&=-g^n_2\cdot G^{n+1}+\tilde{C}_2(G^n),\end{align}
where
 \begin{align}\label{gn2}
\dis g^n_2(t,x,v)=& \int_{\R^3}\int_{\S^2}B(v-u,\theta)\notag\\
&\quad\left[\de F^n(t,x,u')F^n(t,x,v')G^n(t,x,u)+F^n(t,x,u)G^n(t,x,u') G^n(t,x,v')\right]d\omega du,
\end{align} and
 \begin{align}\label{tildeC2}
\dis \tilde{C}_2(G^n)=\int_{\R^3}\int_{\S^2}B(v-u,\theta)F^n(u)G^n(u') G^n(v')d\omega du.
\end{align}
Similarly as above, \eqref{EqG} can be written as 
 \begin{align}\label{reGn}
\dis \pa_tG^{n+1}+v\cdot \na_x G^{n+1}+g^n_2\cdot G^{n+1}   =\tilde{C}_2(G^n).
\end{align}
For \eqref{reFn} and \eqref{reGn}, we integrate along the characteristics to get
 \begin{align}\label{mildFn}
\dis F^{n+1}(t,x,v)=e^{-g^n_1(t,x,v)}F_0(x-vt,v)+\int^t_0e^{-\int^t_sg^n_1(\tau,x-v(t-\tau),v)d\tau}\tilde{C}_1(F^n)(s,x-v(t-s),v)ds,
\end{align}
and
 \begin{align}\label{mildGn}
\dis G^{n+1}(t,x,v)=e^{-g^n_2(t,x,v)}G_0(x-vt,v)+\int^t_0e^{-\int^t_sg^n_2(\tau,x-v(t-\tau),v)d\tau}\tilde{C}_2(G^n)(s,x-v(t-s),v)ds,
\end{align}
where 
$$
G_0(x,v)=(1-\de F_0)(x,v).
$$
We prove our lemma by induction. If $0\leq F^n \leq \frac{1}{\de}$, then $0\leq G^n \leq \frac{1}{\de}$, then $g^n_1(t,x,v)\geq 0$, $g^n_2(t,x,v)\geq 0$, $\tilde{C}_1(G^n)(t,x,v)\geq 0$, $\tilde{C}_2(G^n)(t,x,v)\geq 0$ from \eqref{gn1}, \eqref{gn2}, \eqref{tildeC1} and \eqref{tildeC2} respectively. Hence, $F^{n+1}(t,x,v)\geq 0$ and $G^{n+1}(t,x,v)\geq 0$ from \eqref{mildFn} and \eqref{mildGn}. Then we conclude that $0\leq F^{n+1}(t,x,v) \leq \frac{1}{\de}$. Moreover, from our proof, we can also obtain
\begin{align}\label{+gn}
\dis g^n_1(t,x,v)\geq 0,
\end{align}
for $n=0,1,2,\cdots$.
\end{proof}
Substituting $F^n=\mu_{\de,\rho}+\sqrt{\bar{\mu}_{\de,\rho}}f^n$ into the mild form \eqref{mildFn}, we get
\begin{align}\label{mildfn}
\dis w_\be(v) f^{n+1}(t,x,v)=&e^{-{\int^t_0g_1^n(\tau,x-v(t-\tau),v)d\tau}}w_\be(v) f_0(x-vt,v) \notag\\
&+\int_0^t e^{-{\int^t_sg_1^n(\tau,x-v(t-\tau),v)d\tau}}w_\be(v) (K_\de f^n)(s,x-v(t-s),v)ds \notag\\
&+\int_0^t e^{-{\int^t_sg_1^n(\tau,x-v(t-\tau),v)d\tau}}w_\be(v)\Ga_{\de +}(f^n)(s,x-v(t-s),v)ds,
\end{align}
where $K_\de$ is defined in \eqref{DefKde} and
\begin{align}\label{defgade+}
	\dis
	\Ga_{\de +} (f)(t,x,v)=\frac{1}{\sqrt{\bar{\mu}_{\de,\rho}(v)}}&\int_{\R^3}\int_{\S^2}B(v-u,\theta)\notag\\
	&\left[\sqrt{\bar{\mu}_{\de,\rho}}f(t,x,u')\sqrt{\bar{\mu}_{\de,\rho}}f(t,x,v')\left(1-\de\mu_{\de,\rho}(v)-\de\mu_{\de,\rho}(u)\right)\right.\notag\\
	&\quad +\de\sqrt{\bar{\mu}_{\de,\rho}}f(t,x,v')\sqrt{\bar{\mu}_{\de,\rho}}f(t,x,u)\left(\mu_{\de,\rho}(v)-\mu_{\de,\rho}(u')\right)\notag\\
	&\quad+\de\sqrt{\bar{\mu}_{\de,\rho}}f(t,x,u')\sqrt{\bar{\mu}_{\de,\rho}}f(t,x,u)\left(\mu_{\de,\rho}(v)-\mu_{\de,\rho}(v')\right)\notag\\
	&\quad\left.-\de\sqrt{\bar{\mu}_{\de,\rho}}f(t,x,u')\sqrt{\bar{\mu}_{\de,\rho}}f(t,x,v')\sqrt{\bar{\mu}_{\de,\rho}}f(t,x,u)\right]d\om du.
\end{align}
 By \eqref{gn1}, we rewrite $g_1^n$ as
\begin{align}\label{defgn1}
\dis
&g^n_1(t,x,v)=\int_{\R^3}\int_{\S^2}B(v-u,\theta)\left[\sqrt{\bar{\mu}_{\de,\rho}}f^n(u)\left(1-\de\mu_{\de,\rho}(v')-\de\mu_{\de,\rho}(u')\right)
   \right.\notag\\
&\quad-\de\sqrt{\bar{\mu}_{\de,\rho}}f^n(v')\left(\mu_{\de,\rho}(u)-\mu_{\de,\rho}(u')\right)-\de\sqrt{\bar{\mu}_{\de,\rho}}f^n(u')\left(\mu_{\de,\rho}(u)-\mu_{\de,\rho}(v')\right)\notag\\
&\quad-\de\sqrt{\bar{\mu}_{\de,\rho}}f^n(u)\sqrt{\bar{\mu}_{\de,\rho}}f^n(u')-\de\sqrt{\bar{\mu}_{\de,\rho}}f^n(u)\sqrt{\bar{\mu}_{\de,\rho}}f^n(v')\notag\\
&\quad\left.+\de\sqrt{\bar{\mu}_{\de,\rho}}f^n(u')\sqrt{\bar{\mu}_{\de,\rho}}f^n(v')\right]d\om du+\nu_\de(v),
\end{align}
where $\nu_\de(v)$ is defined in \eqref{Defnude}.
Using properties of $g_1^n$, $K_\de$ and $\Ga_{\de +}$, we prove the following lemma, which implies the boundedness of $w_\be f^n$.

\begin{lemma}\label{Localbound}
There exists a constant $C'=C'(\be,\ga)$ independent of $\rho$ and a positive time $T'_1$ defined as 
\begin{align*}
T'_1=\frac{C'}{C_{5\rho}\left( 1+ \left\|w_\be f_0 \right\|_{{L^{\infty}_{T}L^{\infty}_{v,x}}}+\left\|w_\be f_0 \right\|_{{L^{\infty}_{T}L^{\infty}_{v,x}}}^2\right)},
\end{align*}
for some constant $C_{5\rho}$ which is given in \eqref{C5rho}, such that we have
\begin{align}\label{LELemma}
\left\|w_\be f^n \right\|_{{L^{\infty}_{T'}L^{\infty}_{v,x}}}\leq 2\left\|w_\be f_0 \right\|_{L^{\infty}_{v,x}},
\end{align}
for $n=0,1,2,\cdots$.
\end{lemma}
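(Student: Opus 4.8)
The plan is to run a Picard-type bootstrap on the explicit mild representation \eqref{mildfn}, by induction on $n$. The base case $n=1$ is immediate: since $F^0\equiv 0$, formulas \eqref{gn1} and \eqref{tildeC1} give $g^0_1\equiv 0$ and $\tilde C_1(F^0)\equiv 0$, so \eqref{mildFn} collapses to $F^1(t,x,v)=F_0(x-vt,v)$, i.e.\ $f^1(t,x,v)=f_0(x-vt,v)$, and hence $\|w_\be f^1\|_{L^\infty_{T'}L^\infty_{v,x}}=\|w_\be f_0\|_{L^\infty_{v,x}}$. Assuming \eqref{LELemma} at level $n$, the key structural input is the nonnegativity $g^n_1\ge 0$ proved in \eqref{+gn}, which makes every exponential factor in \eqref{mildfn} bounded by $1$; therefore
\begin{align*}
\|w_\be f^{n+1}(t)\|_{L^\infty_{v,x}}\le \|w_\be f_0\|_{L^\infty_{v,x}}+\int_0^t\|w_\be\,K_\de f^n(s)\|_{L^\infty_{v,x}}\,ds+\int_0^t\|w_\be\,\Ga_{\de+}(f^n)(s)\|_{L^\infty_{v,x}}\,ds,
\end{align*}
and the task reduces to estimating the two integrands.

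For the linear term I would use Lemma \ref{KdeK0}, which gives $|K_\de f(v)|\le C_{2\rho}\int_{\R^3}|k(v,\eta)|\,|f(\eta)|\,d\eta$ with $k$ as in Lemma \ref{K}; pushing the weight inside via $|k_w(v,\eta)|=|k(v,\eta)|\,w_\be(v)/w_\be(\eta)$ and using $\int_{\R^3}|k_w(v,\eta)|\,d\eta\le C(\ga)(1+|v|)^{-1}\le C(\ga)$ from \eqref{Prok}, one obtains $\|w_\be\,K_\de f\|_{L^\infty_{v,x}}\le C(\ga)C_{2\rho}\|w_\be f\|_{L^\infty_{v,x}}$, hence a contribution of order $C(\ga)C_{2\rho}\,t\,\|w_\be f^n\|_{L^\infty_{T'}L^\infty_{v,x}}$.

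The heart of the matter is the nonlinear term $\Ga_{\de+}(f^n)$. Its explicit form \eqref{defgade+} consists of one genuinely quadratic piece $\sqrt{\bar{\mu}_{\de,\rho}}f(u')\sqrt{\bar{\mu}_{\de,\rho}}f(v')(1-\de\mu_{\de,\rho}(v)-\de\mu_{\de,\rho}(u))$, two $\de$-weighted quadratic pieces carrying a difference $\mu_{\de,\rho}(v)-\mu_{\de,\rho}(u')$ resp.\ $\mu_{\de,\rho}(v)-\mu_{\de,\rho}(v')$, and one $\de$-weighted cubic piece. For each piece I would proceed in four moves: (i) replace all kinetic factors by Maxwellians using Lemma \ref{muequi} together with the elementary bounds $\mu_{\de,\rho}\le\rho^{-1}\mu_0$ and $\bar{\mu}_{\de,\rho}\ge\rho(\rho+1)^{-2}\mu_0$ — this is exactly where the $\rho$-dependence of the constant enters, and where the bound $\de\le 1$ makes the estimate uniform in the quantum parameter; (ii) insert $w_\be$ onto every argument of $f^n$ by writing $\sqrt{\bar{\mu}_{\de,\rho}(\cdot)}f^n(\cdot)=w_\be(\cdot)^{-1}\sqrt{\bar{\mu}_{\de,\rho}(\cdot)}\,w_\be(\cdot)f^n(\cdot)$, which pulls out factors $\|w_\be f^n\|_{L^\infty_{v,x}}^2$ or $\|w_\be f^n\|_{L^\infty_{v,x}}^3$; (iii) dispose of the remaining weight $w_\be(v)$ using the energy identity $|u'|^2+|v'|^2=|u|^2+|v|^2$, which gives $|v|\le|u'|+|v'|$ and hence $w_\be(v)\le w_\be(u')w_\be(v')$ whenever both the $u'$- and $v'$-weights are available (the quadratic and cubic gain pieces), while for the pieces missing one argument one absorbs $w_\be(v)$ into the extra Gaussian coming from $\mu_{\de,\rho}(v)$ or, after substituting $|v'|^2=|u|^2+|v|^2-|u'|^2$, from $\mu_{\de,\rho}(u')$, using $\sup_v w_\be(v)e^{-|v|^2/4}=C_\be<\infty$; (iv) bound the leftover velocity integral by $\int_{\R^3}\int_{\S^2}|v-u|^\ga b(\ta)\sqrt{\mu_0(u)}\,d\om du\le C(\ga)(1+|v|)^\ga\le C(\ga)$, which holds because $-3<\ga<0$ keeps $|v-u|^\ga$ locally integrable and the Gaussian provides decay at infinity. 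Summing the four pieces yields, for a constant $C_{3\rho}$ depending continuously on $\rho\in(0,\infty)$,
\begin{align*}
\|w_\be\,\Ga_{\de+}(f)\|_{L^\infty_{v,x}}\le C(\ga)\,C_{3\rho}\left(\|w_\be f\|_{L^\infty_{v,x}}^2+\|w_\be f\|_{L^\infty_{v,x}}^3\right).
\end{align*}

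Finally I would combine the three bounds using the inductive hypothesis $\|w_\be f^n\|_{L^\infty_{T'}L^\infty_{v,x}}\le 2\|w_\be f_0\|_{L^\infty_{v,x}}$: for $t\le T'_1$,
\begin{align*}
\|w_\be f^{n+1}(t)\|_{L^\infty_{v,x}}\le \|w_\be f_0\|_{L^\infty_{v,x}}+2C(\ga)\,t\left(C_{2\rho}+C_{3\rho}\|w_\be f_0\|_{L^\infty_{v,x}}+C_{3\rho}\|w_\be f_0\|_{L^\infty_{v,x}}^2\right)\|w_\be f_0\|_{L^\infty_{v,x}}.
\end{align*}
Taking $C_{5\rho}$ to be a suitable multiple of $\max\{C(\ga)C_{2\rho},C(\ga)C_{3\rho}\}$ (so that it depends continuously on $\rho$) and $C'=C'(\be,\ga)\in(0,1]$ small enough, the second term on the right is at most $C'\|w_\be f_0\|_{L^\infty_{v,x}}\le\|w_\be f_0\|_{L^\infty_{v,x}}$, so $\|w_\be f^{n+1}(t)\|_{L^\infty_{v,x}}\le 2\|w_\be f_0\|_{L^\infty_{v,x}}$ on $[0,T'_1]$, closing the induction. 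I expect the only genuinely delicate point to be move (iii) for the two $\de$-weighted quadratic terms: one must pick the correct energy substitution ($|v'|^2=|u|^2+|v|^2-|u'|^2$ rather than the naive bound), since the wrong choice leaves a growing factor $e^{|v'|^2/4}$ in the integrand, so that arranging the exponents to be simultaneously integrable against $|v-u|^\ga$ and large enough to swallow the surviving polynomial weight is what needs care.
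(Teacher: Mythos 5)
Your proposal is correct and follows essentially the same route as the paper: bound the exponential factors by $1$ via the nonnegativity of $g^n_1$, estimate the $K_\de$ contribution through the kernel bound of Lemma \ref{KdeK0} together with \eqref{Prok}, treat $\Ga_{\de+}$ term by term using Lemma \ref{muequi}, the collision-invariant identity to distribute the weight and extract the Gaussian $e^{-|u|^2/4}$, and then close by choosing $T'_1$ inversely proportional to $C_{5\rho}(1+\|w_\be f_0\|_{L^\infty_{v,x}}+\|w_\be f_0\|_{L^\infty_{v,x}}^2)$. The point you flag as delicate (placing the energy substitution correctly in the $\de$-weighted quadratic terms so no factor $e^{|v'|^2/4}$ survives) is exactly what the paper's computations \eqref{I321}--\eqref{I331} carry out.
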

\begin{proof}
For $(t,x,v)\in[0,T]\times \Omega \times \R^3$, since we have Lemma \ref{positivity} for the positivity of $g_1^n$, it follows from \eqref{mildfn} that
\begin{align} \label{LocalPoint}
\dis \left|w_\beta(v) f^{n+1}(t,x,v) \right|&\leq \left\|w_\be f_0 \right\|_{L^{\infty}_{v,x}}+\int_0^t \left|w_\beta(v)(K_\de f^n)(s,x-v(t-s),v)\right|ds\notag \\
&\quad+\int_0^t \left|w_\beta(v)\Ga_{\de +}(f^n)(s,x-v(t-s),v)\right|ds\notag \\
&=\left\|w_\be f_0 \right\|_{L^{\infty}_{v,x}}+I_1+I_2.
\end{align}
Now we estimate $I_1$. By the definition of $K_\de$ \eqref{DefKde} and Lemma \ref{muequi}, it holds that
\begin{align}\label{estI1}
\dis I_1&=\int_0^t \left|w_\beta(v)(K_\de f^n)(s,x-v(t-s),v)\right|ds\notag\\
&\leq C\, C_{2\rho}\int_0^t \int_{\R^3}\left|k_w(v,\eta)\right|\left|w_\be(\eta) f^n(s,x-v(t-s),\eta)\right|d\eta ds\notag\\
&\leq C\, C_{2\rho}\left\|w_\be f^n \right\|_{{L^{\infty}_{T}L^{\infty}_{v,x}}}\int_0^t \int_{\R^3}\left|k_w(v,\eta)\right|d\eta ds\notag\\
&\leq CT C_{2\rho}\left\|w_\be f^n \right\|_{{L^{\infty}_{T}L^{\infty}_{v,x}}},
\end{align}
where $k_w(v,\eta)$ is defined in \eqref{Defkw}, $C_{2\rho}$ is given in \eqref{DefC12rho}. In the last inequality above, we have used the fact that $\int_{\R^3}\left|k_w(v,\eta)\right|d\eta$ is finite uniformly in $v$ by \eqref{Prok}.

For $I_2$, denoting $x_1=x-v(t-s)$, we first observe from \eqref{defgade+} that 
\begin{align*}
\dis
&\left|w_\be(v) \Ga_{\de +} (f^n)(s,x_1,v)\right|\notag\\
&\leq \frac{C}{\sqrt{\bar{\mu}_{\de,\rho}(v)}}\int_{\R^3}\int_{\S^2}B(v-u,\theta)\big{[}w_\be(v)\left|\sqrt{\bar{\mu}_{\de,\rho}}f^n(s,x_1,u')\right|\left|\sqrt{\bar{\mu}_{\de,\rho}}f^n(s,x_1,v')\right| \notag\\
&\qquad+w_\be(v)\left|\sqrt{\bar{\mu}_{\de,\rho}}f^n(s,x_1,v')\right|\left|\sqrt{\bar{\mu}_{\de,\rho}}f^n(s,x_1,u)\right|\left(\mu_{\de,\rho}(v)+\mu_{\de,\rho}(u')\right)\notag\\
&\qquad +w_\be(v)\left|\sqrt{\bar{\mu}_{\de,\rho}}f^n(s,x_1,u')\right|\left|\sqrt{\bar{\mu}_{\de,\rho}}f^n(s,x_1,u)\right|\left(\mu_{\de,\rho}(v)+\mu_{\de,\rho}(v')\right)\notag\\
&\qquad+w_\be(v) \rho^{-\frac{1}{2}}\left|\sqrt{\bar{\mu}_{\de,\rho}}f^n(s,x_1,u')\right|\left|\sqrt{\bar{\mu}_{\de,\rho}}f^n(s,x_1,v')\right|\left\|w_\be f^n \right\|_{{L^{\infty}_{T}L^{\infty}_{v,x}}}\big{]}d\om du\notag\\
&=I_{21}+I_{22}+I_{23}+I_{24}
\end{align*}
by the fact that $0\leq \de\leq 1$, $0\leq \de\mu_{\de,\rho}(v)=\de/\left(\de+\rho e^\frac{|v|^2}{2}\right)\leq 1$ and $\sqrt{\bar{{\mu}}_{\de,\rho}}(v)=\sqrt\rho e^{{|v|^2}/{4}}/(\de+\rho e^{{|v|^2}/{2}})\leq \rho^{-\frac{1}{2}}$. Moreover, by \eqref{velocity} we have
\begin{align}\label{mumu/mu}
\dis \frac{\sqrt{\bar{{\mu}}_{\de,\rho}}(u')\sqrt{\bar{{\mu}}_{\de,\rho}}(v')}{\sqrt{\bar{{\mu}}_{\de,\rho}}(v)}&=\frac{\sqrt\rho e^{{|u'|^2}/{4}}}{\de+\rho e^{{|u'|^2}/{2}}}\frac{\sqrt\rho e^{{|v'|^2}/{4}}}{\de+\rho e^{{|v'|^2}/{2}}}\frac{\de+\rho e^{{|v|^2}/{2}}}{\sqrt\rho e^{{|v|^2}/{4}}}\notag\\
&\leq C_{3\rho}e^{-{|u'|^2}/{4}}e^{-{|v'|^2}/{4}}e^{{|v|^2}/{4}}\notag\\
&=C_{3\rho}e^{-{|u|^2}/{4}},
\end{align}
where 
\begin{align}\label{C3rho}
C_{3\rho}=\frac{\sqrt\rho(\rho+1)}{\rho^2}.
\end{align}
Using \eqref{velocity} one more time, since $|v|^2\leq |u'|^2+|v'|^2$, we have $w_\be(v)\leq Cw_\be(u')w_\be(v')$ for some constant $C$, which yields
\begin{align}\label{I31I34}
\dis
I_{21}&+I_{24}\notag\\&= \frac{C\left(1+\rho^{-\frac{1}{2}}\left\|w_\be f^n \right\|_{{L^{\infty}_{T}L^{\infty}_{v,x}}}\right)}{\sqrt{\bar{\mu}_{\de,\rho}(v)}}\notag\\
&\qquad\qquad\times\int_{\R^3}\int_{\S^2}B(v-u,\theta)e^{-\frac{|u|^2}{4}}\big{|}w_\be(v)\sqrt{\bar{\mu}_{\de,\rho}}f^n(s,x_1,u')\sqrt{\bar{\mu}_{\de,\rho}}f^n(s,x_1,v')\big{|}d\om du \notag\\
&\leq C\left(1+\rho^{-\frac{1}{2}}\left\|w_\be f^n \right\|_{{L^{\infty}_{T}L^{\infty}_{v,x}}}\right)C_{3\rho}\notag\\
&\qquad\qquad\times\int_{\R^3}\int_{\S^2}|v-u|^\ga e^{-\frac{|u|^2}{4}}|w_\be(u') f^n(s,x_1,u')w_\be(v') f^n(s,x_1,v')|d\om du\notag\\
&\leq C\left(1+\rho^{-\frac{1}{2}}\left\|w_\be f^n \right\|_{{L^{\infty}_{T}L^{\infty}_{v,x}}}\right)C_{3\rho}\left\|w_\be f^n \right\|_{{L^{\infty}_{T}L^{\infty}_{v,x}}}^2.
\end{align}
We now turn to $I_{22}$ and $I_{23}$. Recall
\begin{align}\label{defI32}
\dis
I_{22}&= \frac{C}{\sqrt{\bar{\mu}_{\de,\rho}(v)}}\int_{\R^3}\int_{\S^2}B(v-u,\theta)w_\be(v)\notag\\
&\qquad\qquad\qquad\qquad\left|\sqrt{\bar{\mu}_{\de,\rho}}f^n(s,x_1,v')\right|\left|\sqrt{\bar{\mu}_{\de,\rho}}f^n(s,x_1,u)\right|\left(\mu_{\de,\rho}(v)+\mu_{\de,\rho}(u')\right)d\om du.
\end{align}
 Noticing we have $\mu_{\de,\rho}(v)=\left(\de+\rho e^\frac{|v|^2}{2}\right)^{-1}\leq \rho^{-1}e^{-\frac{|v|^2}{2}}$ and $w_{\be}(v)=(1+|v|^2)^{\frac{\be}{2}}\leq C(1+|u'|^2)^{\frac{\be}{2}}(1+|v'|^2)^{\frac{\be}{2}}$, similar calculations as \eqref{mumu/mu} and \eqref{I31I34} show that
\begin{align}\label{I321}
&w_\be(v)\frac{\sqrt{\bar{\mu}_{\de,\rho}}(v')\sqrt{\bar{\mu}_{\de,\rho}}(u)}{\sqrt{\bar{\mu}_{\de,\rho}}(v)}|f^n(v')||f^n(u)|\mu_{\de,\rho}(v)\notag\\
&\leq C\,\frac{C_{3\rho}}{\rho}e^{-\frac{|v'|^2}{4}-\frac{|u|^2}{4}+\frac{|v|^2}{4}-\frac{|v|^2}{2}}w_\be(v)\left\|w_\be f^n \right\|_{{L^{\infty}_{T}L^{\infty}_{v,x}}}^2\notag\\
&\leq C\,\frac{C_{3\rho}}{\rho}e^{-\frac{|u|^2}{4}}\left\|w_\be f^n \right\|_{{L^{\infty}_{T}L^{\infty}_{v,x}}}^2,
\end{align}
and
\begin{align}\label{I322}
&w_\be(v)\frac{\sqrt{\bar{\mu}_{\de,\rho}}(v')\sqrt{\bar{\mu}_{\de,\rho}}(u)}{\sqrt{\bar{\mu}_{\de,\rho}}(v)}|f^n(s,x_1,v')||f^n(s,x_1,u)|\mu_{\de,\rho}(u')\notag\\
&\leq C\,\frac{C_{3\rho}}{\rho}e^{-\frac{|v'|^2}{4}-\frac{|u|^2}{4}+\frac{|v|^2}{4}-\frac{|u'|^2}{2}}w_\be(u')|w_\be(v')f^n(s,x_1,v')||f^n(s,x_1,u)|\notag\\
&\leq C\,\frac{C_{3\rho}}{\rho}e^{-\frac{|v'|^2}{4}-\frac{|u|^2}{4}+\frac{|v|^2}{4}-\frac{|u'|^2}{4}}\left\|w_\be f^n \right\|_{{L^{\infty}_{T}L^{\infty}_{v,x}}}^2\leq C\,\frac{C_{3\rho}}{\rho}e^{-\frac{|u|^2}{4}}\left\|w_\be f^n \right\|_{{L^{\infty}_{T}L^{\infty}_{v,x}}}^2.
\end{align}
Substituting \eqref{I321} and \eqref{I322} into \eqref{defI32}, it holds that
\begin{align}\label{I32}
\dis
I_{22}&\leq C\, C_{4\rho}\left\|w_\be f^n \right\|_{{L^{\infty}_{T}L^{\infty}_{v,x}}}^2\int_{\R^3}\int_{\S^2}|v-u|^\gamma e^{-\frac{|u|^2}{4}}d\om du\notag\\
&\leq C\, C_{4\rho}\left\|w_\be f^n \right\|_{{L^{\infty}_{T}L^{\infty}_{v,x}}}^2,
\end{align}
where 
\begin{align}\label{C4rho}
C_{4\rho}=\frac{C_{3\rho}}{\rho}=\frac{\sqrt\rho(\rho+1)}{\rho^3}.
\end{align}
Recall 
\begin{align*}%\label{defI33}
\dis
I_{23}&= \frac{C}{\sqrt{\bar{\mu}_{\de,\rho}(v)}}\int_{\R^3}\int_{\S^2}B(v-u,\theta)w_\be(v)\notag\\
&\qquad\qquad\qquad\qquad\left|\sqrt{\bar{\mu}_{\de,\rho}}f^n(s,x_1,u')\right|\left|\sqrt{\bar{\mu}_{\de,\rho}}f^n(s,x_1,u)\right|\left(\mu_{\de,\rho}(v)+\mu_{\de,\rho}(v')\right)d\om du.
\end{align*}
Similarly as \eqref{I321} and \eqref{I322}, we have
\begin{align}\label{I331}
&w_\be(v)\frac{\sqrt{\bar{\mu}_{\de,\rho}}(u')\sqrt{\bar{\mu}_{\de,\rho}}(u)}{\sqrt{\bar{\mu}_{\de,\rho}}(v)}|f^n(s,x_1,u')||f^n(s,x_1,u)|\left(\mu_{\de,\rho}(v)+\mu_{\de,\rho}(v')\right)\notag\\
&\leq C\,C_{4\rho}\left(e^{-\frac{|u|^2}{4}}|f^n(s,x_1,v')||f^n(s,x_1,u)|\right.\notag\\
&\qquad\qquad\qquad\left.+e^{-\frac{|u'|^2}{4}-\frac{|u|^2}{4}+\frac{|v|^2}{4}-\frac{|v'|^2}{2}}w_\be(v')|w_\be(u')f^n(s,x_1,u')||f^n(s,x_1,u)|\right)\notag\\
&\leq C\, C_{4\rho}e^{-\frac{|u|^2}{4}}\left\|w_\be f^n \right\|_{{L^{\infty}_{T}L^{\infty}_{v,x}}}^2,
\end{align}
which implies 
\begin{align}\label{I33}
\dis
I_{23}\leq C\, C_{4\rho}\left\|w_\be f^n \right\|_{{L^{\infty}_{T}L^{\infty}_{v,x}}}^2.
\end{align}
Combining \eqref{I31I34}, \eqref{I32} and \eqref{I33}, we obtain
\begin{align*}
\dis
&\left|w_\be(v)\Ga_{\de +} (f^n)(s,x_1,v)\right|\notag\\&\qquad\qquad\leq C\,C_{3\rho}\left(1+\rho^{-\frac{1}{2}}\left\|w_\be f^n\right\|_{{L^{\infty}_{T}L^{\infty}_{v,x}}}\right)\left\|w_\be f^n \right\|_{{L^{\infty}_{T}L^{\infty}_{v,x}}}^2+C\, C_{4\rho}\left\|w_\be f^n \right\|_{{L^{\infty}_{T}L^{\infty}_{v,x}}}^2.
\end{align*}
Thus, from the above estimate and \eqref{LocalPoint} we have
\begin{align} \label{estI2}
\dis I_2&= \int_0^t \left|w_\beta(v)\Ga_{\de +}(f^n)(s,x-v(t-s),v)\right|ds\notag \\
&\leq CTC_{3\rho}\left(1+\rho^{-\frac{1}{2}}\left\|w_\be f^n \right\|_{{L^{\infty}_{T}L^{\infty}_{v,x}}}\right)\left\|w_\be f^n \right\|_{{L^{\infty}_{T}L^{\infty}_{v,x}}}^2+CT C_{4\rho}\left\|w_\be f^n \right\|_{{L^{\infty}_{T}L^{\infty}_{v,x}}}^2.
\end{align}
We observe that $C_{3\rho}\rho^{-\frac{1}{2}}=C_{2\rho}$. By \eqref{LocalPoint}, \eqref{estI1} and \eqref{estI2}, it follows that
\begin{align}\label{localbound}
\dis \left\|w_\be f^{n+1} \right\|_{{L^{\infty}_{T}L^{\infty}_{v,x}}}&\leq \left\|w_\be f_0 \right\|_{L^{\infty}_{v,x}}+CT C_{2\rho}\left\|w_\be f^n \right\|_{{L^{\infty}_{T}L^{\infty}_{v,x}}}\notag\\
&\quad+CTC_{3\rho}\left(1+\rho^{-\frac{1}{2}}\left\|w_\be f^n \right\|_{{L^{\infty}_{T}L^{\infty}_{v,x}}}\right)\left\|w_\be f^n \right\|_{{L^{\infty}_{T}L^{\infty}_{v,x}}}^2+CT C_{4\rho}\left\|w_\be f^n \right\|_{{L^{\infty}_{T}L^{\infty}_{v,x}}}^2\notag\\
&\leq \left\|w_\be f_0 \right\|_{L^{\infty}_{v,x}}+CTC_{5\rho}\left(\left\|w_\be f^n \right\|_{{L^{\infty}_{T}L^{\infty}_{v,x}}}+\left\|w_\be f^n \right\|_{{L^{\infty}_{T}L^{\infty}_{v,x}}}^2+\left\|w_\be f^n \right\|_{{L^{\infty}_{T}L^{\infty}_{v,x}}}^3  \right),
\end{align}
where \begin{align}\label{C5rho}C_{5\rho}= C_{2\rho}+C_{3\rho}+C_{4\rho}.\end{align}
We choose
\begin{align}\label{defTprime1}
T'_1=\frac{1}{16C\,C_{5\rho}\left(1+ \left\|w_\be f_0 \right\|_{{L^{\infty}_{v,x}}}+\left\|w_\be f_0 \right\|_{{L^{\infty}_{v,x}}}^2\right)}.
\end{align}
Then \eqref{LELemma} holds from \eqref{localbound} and \eqref{defTprime1}.
\end{proof}
Based on Lemma \ref{Localbound}, we can prove that the sequence $\{w_\be f^n\}_{n=1}^\infty$ is a Cauchy sequence for $(t,x,v)\in [0,T''_1]\times \Omega \times \R^3$ where $T''_1$ is defined in the following lemma.
\begin{lemma}\label{cauchyseq}
There exists a constant $C''=C''(\be,\ga)$ which is independent of $\rho$ such that
\begin{align}\label{CSLemma}
\left\|w_\be f^{n+2}- w_\be f^{n+1}\right\|_{{L^{\infty}_{T''}L^{\infty}_{v,x}}}\leq \frac{1}{2}\left\|w_\be f^{n+1}- w_\be f^{n}\right\|_{{L^{\infty}_{T''}L^{\infty}_{v,x}}},
\end{align}
for $n=0,1,2,\cdots$, where the positive time $T''_1$ is defined by
\begin{align*}
T''_1=\min\left\{T'_1,\frac{C''}{C_{6\rho}\left(1+ \left\|w_\be f_0 \right\|_{{L^{\infty}_{T}L^{\infty}_{v,x}}}+\left\|w_\be f_0 \right\|_{{L^{\infty}_{T}L^{\infty}_{v,x}}}^2 \right)}\right\},
\end{align*}
and $C_{6\rho}$ is given in \eqref{C6rho}. 
\end{lemma}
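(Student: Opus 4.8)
The plan is to subtract the mild representations \eqref{mildfn} at levels $n+1$ and $n$ and to estimate the resulting difference by the techniques already developed in Lemmas \ref{KdeK0} and \ref{Localbound}. Writing a difference of damped quantities as $e^{-A}g-e^{-B}h=e^{-A}(g-h)+(e^{-A}-e^{-B})h$, the quantity $w_\be(v)(f^{n+2}-f^{n+1})(t,x,v)$ splits, for $(t,x,v)\in[0,T'']\times\Omega\times\R^3$, into three groups: (i) the initial–datum term $\big(e^{-\int_0^t g_1^{n+1}}-e^{-\int_0^t g_1^{n}}\big)w_\be f_0(x-vt,v)$; (ii) two terms built from $w_\be K_\de f^{n+1}-w_\be K_\de f^{n}$, integrated in $s$ against the exponential factors; and (iii) two terms built from $w_\be\Ga_{\de+}(f^{n+1})-w_\be\Ga_{\de+}(f^{n})$, likewise integrated. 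The a priori bound $\|w_\be f^n\|_{L^\infty_{T'}L^\infty_{v,x}}\leq 2\|w_\be f_0\|_{L^\infty_{v,x}}$ from Lemma \ref{Localbound} is available for every $n\geq 0$, in particular for $n=0$ since $f^0\equiv 0$, and it holds on $[0,T'']$ because $T''\leq T'_1$; these facts make all constants below uniform in $n$.

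The first step is a pointwise estimate on $g_1^{n+1}-g_1^{n}$. In \eqref{defgn1} the term $\nu_\de(v)$ is independent of $n$ and drops out, and every remaining summand is linear or quadratic in $f^{n+1},f^{n}$, so that in the difference exactly one factor becomes $f^{n+1}-f^{n}$. Using $\sqrt{\bar\mu_{\de,\rho}}(v)/w_\be(v)\leq C\rho^{-1/2}e^{-|v|^2/4}$, the integrability $\int_{\R^3}|v-u|^\ga e^{-|u|^2/4}\,du<\infty$ for $-3<\ga<0$, and $\|w_\be f^n\|,\|w_\be f^{n+1}\|\leq 2\|w_\be f_0\|$, I would obtain, uniformly in $(t,x,v)$ and in $\de\in[0,1]$,
\begin{align*}
\big|g_1^{n+1}-g_1^{n}\big|(t,x,v)\leq C\,\widetilde C_\rho\big(1+\|w_\be f_0\|_{L^\infty_{v,x}}\big)\,\big\|w_\be(f^{n+1}-f^{n})\big\|_{L^\infty_{T''}L^\infty_{v,x}},
\end{align*}
where $\widetilde C_\rho$ is a combination of $C_{2\rho},C_{3\rho},C_{4\rho}$. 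Since $g_1^{n}\geq 0$ and $g_1^{n+1}\geq 0$ by \eqref{+gn}, the elementary inequality $|e^{-a}-e^{-b}|\leq|a-b|$ turns this into $\big|e^{-\int_s^t g_1^{n+1}}-e^{-\int_s^t g_1^{n}}\big|\leq (t-s)\,C\,\widetilde C_\rho(1+\|w_\be f_0\|_{L^\infty_{v,x}})\|w_\be(f^{n+1}-f^{n})\|_{L^\infty_{T''}L^\infty_{v,x}}$, which already bounds group (i) by $C\,T''\widetilde C_\rho(1+\|w_\be f_0\|)\|w_\be f_0\|\,\|w_\be(f^{n+1}-f^{n})\|_{L^\infty_{T''}L^\infty_{v,x}}$.

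For group (ii), Lemma \ref{KdeK0} with \eqref{Prok} gives $|w_\be K_\de g(v)|\leq C\,C_{2\rho}\|w_\be g\|_{L^\infty_{v,x}}$; applied to $g=f^{n+1}-f^{n}$ this controls the first term by $C\,T''C_{2\rho}\|w_\be(f^{n+1}-f^{n})\|_{L^\infty_{T''}L^\infty_{v,x}}$, and applied to $g=f^{n}$ together with the exponential–difference bound it controls the second term by $C\,(T'')^2C_{2\rho}\widetilde C_\rho(1+\|w_\be f_0\|)\|w_\be f_0\|\,\|w_\be(f^{n+1}-f^{n})\|_{L^\infty_{T''}L^\infty_{v,x}}$. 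For group (iii) I would expand $\Ga_{\de+}(f^{n+1})-\Ga_{\de+}(f^{n})$ using the bilinear/trilinear structure of \eqref{defgade+} into a finite sum in which each summand carries one factor $f^{n+1}-f^{n}$ and the others equal $f^{n+1}$ or $f^{n}$, and then rerun verbatim the pointwise estimates \eqref{mumu/mu}--\eqref{I33} from the proof of Lemma \ref{Localbound}, yielding $|w_\be(\Ga_{\de+}(f^{n+1})-\Ga_{\de+}(f^{n}))(v)|\leq C(C_{3\rho}+C_{4\rho})(1+\|w_\be f_0\|+\|w_\be f_0\|^2)\|w_\be(f^{n+1}-f^{n})\|$; after $s$–integration and adding the exponential–difference piece (and using $(T'')^2\leq T''$ since $T''\leq T'_1\leq 1$), group (iii) is bounded by $C\,T''(C_{3\rho}+C_{4\rho})(1+\|w_\be f_0\|+\|w_\be f_0\|^2)\|w_\be(f^{n+1}-f^{n})\|_{L^\infty_{T''}L^\infty_{v,x}}$.

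Summing (i)–(iii) and absorbing all powers of $\|w_\be f_0\|_{L^\infty_{v,x}}$ into $1+\|w_\be f_0\|_{L^\infty_{v,x}}+\|w_\be f_0\|_{L^\infty_{v,x}}^2$ yields
\begin{align*}
\big\|w_\be(f^{n+2}-f^{n+1})\big\|_{L^\infty_{T''}L^\infty_{v,x}}\leq C''\,T''\,C_{6\rho}\big(1+\|w_\be f_0\|_{L^\infty_{v,x}}+\|w_\be f_0\|_{L^\infty_{v,x}}^2\big)\,\big\|w_\be(f^{n+1}-f^{n})\big\|_{L^\infty_{T''}L^\infty_{v,x}},
\end{align*}
with $C''=C''(\be,\ga)$ independent of $\rho$ and $C_{6\rho}$ the resulting combination of $C_{2\rho},C_{3\rho},C_{4\rho}$; choosing $T''_1=\min\{T'_1,\,C''/(C_{6\rho}(1+\|w_\be f_0\|_{L^\infty_{v,x}}+\|w_\be f_0\|_{L^\infty_{v,x}}^2))\}$ makes the prefactor at most $1/2$, which is \eqref{CSLemma}. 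I expect the main obstacle to be exactly the treatment of the difference of the integrating factors $e^{-\int g_1^n}$: it forces the delicate pointwise analysis of $g_1^{n+1}-g_1^n$, crucially relies on the positivity $g_1^n\geq 0$ from Lemma \ref{positivity}, and must be carried through with every constant kept explicitly uniform in the quantum parameter $\de\in[0,1]$, with the $\rho$–dependence tracked precisely enough to define $C_{6\rho}$.
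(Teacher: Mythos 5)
Your proposal follows essentially the same route as the paper: the same decomposition into exponential-difference terms (acting on the initial datum, the $K_\de$ term, and the $\Ga_{\de+}$ term) plus operator-difference terms, the same pointwise bound on $g_1^{n+1}-g_1^n$ combined with $|e^{-a}-e^{-b}|\le|a-b|$ and the positivity of $g_1^n$, the linearity of $K_\de$, and the same multilinear splitting of $\Ga_{\de+}(f^{n+1})-\Ga_{\de+}(f^{n})$. The only slip is the justification ``$(T'')^2\le T''$ since $T''\le T'_1\le 1$'' --- $T'_1\le 1$ can fail for large $\rho$ because $C_{5\rho}$ is then small --- but the fix is immediate and is what the paper does: absorb the extra factor via $T''\,C_{5\rho}\bigl(1+\|w_\be f_0\|_{L^\infty_{v,x}}+\|w_\be f_0\|^2_{L^\infty_{v,x}}\bigr)\le C$, which holds by the definition of $T'_1\ge T''$.
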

\begin{proof}
We first assume that $(t,x,v)\in [0,T]\times \Omega \times \R^3$ for $T\leq T'_1$ where $T'_1$ is defined in \eqref{defTprime1} so that Lemma \ref{Localbound} holds. By the mild form \eqref{mildfn}, we can write $w_\be f^{n+2}- w_\be f^{n+1}$ as
\begin{align*}
\dis &w_\beta(v)(f^{n+2}-f^{n+1})(t,x,v)\notag\\
&=  w_\beta(v) f_0(x-vt,v)\left( e^{-{\int^t_0g_1^{n+1}(\tau,x-v(t-\tau),v)d\tau}}-e^{-{\int^t_0g_1^n(\tau,x-v(t-\tau),v)d\tau}}  \right)\notag\\
&\quad +\int_0^t w_\beta(v)(K_\de f^{n+1})(s,x-v(t-s),v)\left(e^{-{\int^t_sg_1^{n+1}(\tau,x-v(t-\tau),v)d\tau}}-e^{-{\int^t_sg_1^n(\tau,x-v(t-\tau),v)d\tau}}  \right) ds\notag\\
&\quad +\int_0^t w_\beta(v)\Ga_{\de+}(f^{n+1})(s,x-v(t-s),v)\notag \\
& \quad\quad \times\left(e^{-{\int^t_sg_1^{n+1}(\tau,x-v(t-\tau),v)d\tau}}-e^{-{\int^t_sg_1^n(\tau,x-v(t-\tau),v)d\tau}}\right)ds\notag \\
&\quad +\int_0^t e^{-{\int^t_0g_1^n(\tau,x-v(t-\tau),v)d\tau}} w_\beta(v)\left(K_\de f^{n+1}-K_\de f^{n}\right)(s,x-v(t-s),v) ds \notag\\
&\quad +\int_0^t e^{-{\int^t_0g^n_1(\tau,x-v(t-\tau),v)d\tau}} w_\beta(v)\left(\Ga_{\de+}(f^{n+1})-\Ga_{\de+}(f^{n})\right)(s,x-v(t-s),v) ds.
\end{align*}
By the positivity of $g_1^n$ given in \eqref{+gn} and the fact that $|e^{-a}-e^{-b}|\leq|a-b|$ for any $a,b\geq0$, it follows that
\begin{align}\label{diff}
\dis & \left| w_\beta(v)(f^{n+2}-f^{n+1})(t,x,v) \right|\notag\\
&\leq \left|w_\beta(v) f_0(x-vt,v)\right|\int^t_0\left| (g_1^{n+1}-g_1^n)(\tau,x-v(t-\tau),v) \right|d\tau \notag\\
&\ \ +\int_0^t \left|w_\beta(v)(K_\de f^{n+1})(s,x-v(t-s),v)\right|\int^t_s\left| (g_1^{n+1}-g_1^n)(\tau,x-v(t-\tau),v) \right|d\tau ds\notag\\
&\ \ +\int_0^t \left|w_\beta(v)\Ga_{\de+}(f^{n+1})(s,x-v(t-s),v)\right|\int^t_s\left| (g_1^{n+1}-g_1^n)(\tau,x-v(t-\tau),v) \right|d\tau ds\notag \\
&\ \ +\int_0^t \left|w_\beta(v)\left(K_\de f^{n+1}-K_\de f^{n}\right)(s,x-v(t-s),v)\right| ds\notag \\
&\ \ +\int_0^t \left|w_\beta(v)\left(\Ga_{\de+}(f^{n+1})-\Ga_{\de+}(f^{n})\right)(s,x-v(t-s),v)\right| ds\notag\\
&=I_{3}+I_{4}+I_{5}+I_{6}+I_{7}.
\end{align}
We consider $\int^t_0\left| (g_1^{n+1}-g_1^{n})(\tau,x-v(t-\tau),v) \right|d\tau$ first. From the definition of $g^n_1$ \eqref{defgn1}, we directly obtain that 
\begin{align}\label{1stdiff}
\dis &\int^t_0\left| (g_1^{n+1}-g_1^n)(\tau,x-v(t-\tau),v) \right|d\tau\notag\\
&\leq\int^t_0\int_{\R^3}\int_{\S^2}B(v-u,\theta)\left[\sqrt{\bar{\mu}_{\de,\rho}}(u)\left\|(f^{n+1}- f^{n})(\tau,u)\right\|_{{L^{\infty}_{x}}}\left|1-\de\mu_{\de,\rho}(v')-\de\mu_{\de,\rho}(u')\right|
   \right.\notag\\
&\qquad\qquad+\de\sqrt{\bar{\mu}_{\de,\rho}}(v')\left\|(f^{n+1}- f^{n})(\tau,v')\right\|_{{L^{\infty}_{x}}}\left(\mu_{\de,\rho}(u)+\mu_{\de,\rho}(u')\right)\notag\\
&\qquad\qquad+\de\sqrt{\bar{\mu}_{\de,\rho}}(u')\left\| (f^{n+1}- f^{n})(\tau,u')\right\|_{{L^{\infty}_{x}}}\left(\mu_{\de,\rho}(u)+\mu_{\de,\rho}(v')\right)\notag\\
&\qquad\qquad+\de\sqrt{\bar{\mu}_{\de,\rho}}(u)\sqrt{\bar{\mu}_{\de,\rho}}(u')\left\|f^{n+1}(\tau,u)f^{n+1}(\tau,u')-f^n(\tau,u)f^n(\tau,u')\right\|_{{L^{\infty}_{x}}}\notag\\
&\qquad\qquad+\de\sqrt{\bar{\mu}_{\de,\rho}}(u)\sqrt{\bar{\mu}_{\de,\rho}}(v')\left\|f^{n+1}(\tau,u)f^{n+1}(\tau,v')-f^n(\tau,u)f^n(\tau,v')\right\|_{{L^{\infty}_{x}}}\notag\\
&\qquad\qquad\left.+\de\sqrt{\bar{\mu}_{\de,\rho}}(u')\sqrt{\bar{\mu}_{\de,\rho}}(v')\left\|f^{n+1}(\tau,u')f^{n+1}(\tau,v')-f^n(\tau,u')f^n(\tau,v')\right\|_{{L^{\infty}_{x}}}\right]d\om dud\tau\notag\\
&=I'_1+I'_2+I'_3+I'_4+I'_5+I'_6.
\end{align}
Direct calculations show that $0\leq \de\mu_{\de,\rho}(v)\leq 1$ and $\sqrt{\bar{{\mu}}_{\de,\rho}}(u)\leq \rho^{-\frac{1}{2}}e^{-\frac{|u|^2}{4}}$ hold true for $0\leq \de\leq 1$. We then have
\begin{align}\label{Ipri1}
\dis I'_1&\leq C \rho^{-\frac{1}{2}}\int^t_0\int_{\R^3}\int_{\S^2}B(v-u,\theta)e^{-\frac{|u|^2}{4}}\sqrt{\bar{\mu}_{\de,\rho}}(u)\left\|f^{n+1}- f^{n}\right\|_{L^{\infty}_{T}L^{\infty}_{v,x}}d\om dud\tau\notag\\
&\leq  CT \rho^{-\frac{1}{2}}\left\|f^{n+1}- f^{n}\right\|_{L^{\infty}_{T}L^{\infty}_{v,x}}.
\end{align}
For $I'_2$ and $I'_3$, we can get $e^{-\frac{|u|^2}{4}}$ by the following inequality
\begin{align}\label{getdecay}
\max\left\{\sqrt{\bar{{\mu}}_{\de,\rho}}(v')\mu_{\de,\rho}(u), \sqrt{\bar{{\mu}}_{\de,\rho}}(v')\mu_{\de,\rho}(u'), \sqrt{\bar{{\mu}}_{\de,\rho}}(u')\mu_{\de,\rho}(u), \sqrt{\bar{{\mu}}_{\de,\rho}}(u')\mu_{\de,\rho}(v')\right\}\leq\rho^{-\frac{3}{2}}e^{-\frac{|u|^2}{4}}.
\end{align}
A similar argument as in \eqref{Ipri1} shows that
\begin{align}\label{Ipri23}
\dis I'_2+I'_3&\leq CT \rho^{-\frac{3}{2}}\left\|f^{n+1}- f^{n}\right\|_{L^{\infty}_{T}L^{\infty}_{v,x}}.
\end{align}
For $I'_4+I'_5+I'_6$, we first split the terms in the $L^\infty$ norm as follows:
\begin{align}\label{splitting}
&\left|f^{n+1}(u')f^{n+1}(v')-f^n(u')f^n(v')\right|\notag\\
&\qquad\qquad\qquad\leq\left|f^{n+1}(v') \right|\left|f^{n+1}(u')-f^n(u')\right|+\left|f^n(u') \right|\left|f^{n+1}(v')-f^n(v')\right|.
\end{align}
Then by a similar inequality as \eqref{getdecay} that
\begin{align*}
\max\{\sqrt{\bar{\mu}_{\de,\rho}}(u)\sqrt{\bar{\mu}_{\de,\rho}}(u'),\sqrt{\bar{\mu}_{\de,\rho}}(u)\sqrt{\bar{\mu}_{\de,\rho}}(v'),\sqrt{\bar{\mu}_{\de,\rho}}(u')\sqrt{\bar{\mu}_{\de,\rho}}(v')\}\leq \rho^{-1}e^{-\frac{|u|^2}{4}},
\end{align*}
we yield
\begin{align}\label{Ipri456}
\dis I'_4+I'_5+I'_6&\leq CT \rho^{-1}\left( \left\|f^{n+1}\right\|_{L^{\infty}_{T}L^{\infty}_{v,x}}+\left\| f^{n}\right\|_{L^{\infty}_{T}L^{\infty}_{v,x}} \right)\left\|f^{n+1}- f^{n}\right\|_{L^{\infty}_{T}L^{\infty}_{v,x}}.
\end{align}
Combining \eqref{1stdiff}, \eqref{Ipri1}, \eqref{Ipri23} and \eqref{Ipri456}, it follows that
\begin{align}\label{2nddiff}
\dis &\int^t_0\left| (g_1^{n+1}-g_1^n)(\tau,x-v(t-\tau),v) \right|d\tau\notag\\
&\leq  CT\left[\rho^{-\frac{1}{2}}+ \rho^{-\frac{3}{2}}+\rho^{-1}\left( \left\|f^{n+1}\right\|_{L^{\infty}_{T}L^{\infty}_{v,x}}+\left\| f^{n}\right\|_{L^{\infty}_{T}L^{\infty}_{v,x}} \right)\right]\left\|f^{n+1}- f^{n}\right\|_{L^{\infty}_{T}L^{\infty}_{v,x}}.
\end{align}
Then by \eqref{diff}, \eqref{2nddiff} and Lemma \ref{localbound}, we have
\begin{align*}
\dis I_{3}+I_{4}+I_{5}\leq   &CT\left(\rho^{-\frac{1}{2}}+ \rho^{-\frac{3}{2}}+\rho^{-1} \left\|f_0\right\|_{L^{\infty}_{T}L^{\infty}_{v,x}}\right)\left\|f^{n+1}- f^{n}\right\|_{L^{\infty}_{T}L^{\infty}_{v,x}}\notag\\
&\qquad\left( \left\|w_\be f_0 \right\|_{L^{\infty}_{v,x}}+\int_0^t \left|w_\beta(v)K_\de f^{n+1}(s,x-v(t-s),v)\right|ds\right.\notag\\
&\qquad\qquad\left.
+\int_0^t \left|w_\beta(v)\Ga_{\de +}(f^{n+1})(s,x-v(t-s),v)\right|ds \right).
\end{align*}
Recalling our assumption that $T\leq T'_1$, similar calculations as in \eqref{estI1}, \eqref{estI2} and \eqref{localbound} yield
\begin{align}\label{I345}
\dis I_{3}+&I_{4}+I_{5}\leq   CT\left(\rho^{-\frac{1}{2}}+ \rho^{-\frac{3}{2}}+\rho^{-1} \left\|f_0\right\|_{L^{\infty}_{T}L^{\infty}_{v,x}}\right)\left\|f^{n+1}- f^{n}\right\|_{L^{\infty}_{T}L^{\infty}_{v,x}}\notag\\
&
\left[ \left\|w_\be f_0 \right\|_{L^{\infty}_{v,x}}+CTC_{5\rho}\left(\left\|w_\be f^{n+1} \right\|_{{L^{\infty}_{T}L^{\infty}_{v,x}}}+\left\|w_\be f^{n+1} \right\|_{{L^{\infty}_{T}L^{\infty}_{v,x}}}^2+\left\|w_\be f^{n+1} \right\|_{{L^{\infty}_{T}L^{\infty}_{v,x}}}^3  \right)\right]\notag\\
\qquad&\leq CT\left( \rho^{-\frac{1}{2}}+ \rho^{-\frac{3}{2}}+\rho^{-1} \right)\left(\left\|w_\be f_0\right\|_{L^{\infty}_{v,x}}+\left\|w_\be f_0\right\|^2_{L^{\infty}_{v,x}}\right)\left\|f^{n+1}- f^{n}\right\|_{L^{\infty}_{T}L^{\infty}_{v,x}},
\end{align}
where $C_{5\rho}$ is defined in \eqref{C5rho}.

Recall from \eqref{diff} that
\begin{align*}I_6=\int_0^t \left|w_\beta(v)\left(K_\de f^{n+1}-K_\de f^{n}\right)(s,x-v(t-s),v)\right| ds.\end{align*}
Since $K_\de$ is linear, we have by Lemma \ref{KdeK0} that
\begin{align} \label{I6}
I_6\leq &C_{2\rho}\left\|w_\be f^{n+1}- w_\be f^{n}\right\|_{L^{\infty}_{T}L^{\infty}_{v,x}} \int_0^t \int_{\R^3}\left|k_w(v,\eta)\right|d\eta ds\notag\\
\leq& CT C_{2\rho}\left\|w_\be f^{n+1}- w_\be f^{n}\right\|_{L^{\infty}_{T}L^{\infty}_{v,x}},
\end{align}
where $C_{2\rho}$ and $k_w$ are defined in \eqref{DefC12rho} and \eqref{Defkw} respectively. 
\end{proof}

At last we turn to $I_7$ where
\begin{align}\label{defI7}I_7&=\int_0^t \left|w_\beta(v)\left(\Ga_{\de+}(f^{n+1})-\Ga_{\de+}(f^{n})\right)(s,x-v(t-s),v)\right| ds\notag\\
&\leq \frac{w_\beta(v)}{\sqrt{\bar{\mu}_{\de,\rho}(v)}}\int_0^t \int_{\R^3}\int_{\S^2}B(v-u,\theta)\notag\\
&\quad\left[\sqrt{\bar{\mu}_{\de,\rho}}(u')\sqrt{\bar{\mu}_{\de,\rho}}(v')\left\|f^{n+1}(s,u')f^{n+1}(s,v')-f^n(s,u')f^n(s,v')\right\|_{{L^{\infty}_{x}}}\left|1+\de\mu_{\de,\rho}(v)+\de\mu_{\de,\rho}(u)\right|\right.\notag\\
&\quad +\de\sqrt{\bar{\mu}_{\de,\rho}}(v')\sqrt{\bar{\mu}_{\de,\rho}}(u)\left\|f^{n+1}(s,v')f^{n+1}(s,u)-f^n(s,v')f^n(s,u)\right\|_{{L^{\infty}_{x}}}\left|\mu_{\de,\rho}(v)+\mu_{\de,\rho}(u')\right|\notag\\
&\quad+\de\sqrt{\bar{\mu}_{\de,\rho}}(u')\sqrt{\bar{\mu}_{\de,\rho}}(u)\left\|f^{n+1}(s,u')f^{n+1}(s,u)-f^n(s,u')f^n(s,u)\right\|_{{L^{\infty}_{x}}}\left|\mu_{\de,\rho}(v)+\mu_{\de,\rho}(v')\right|\notag\\
&\quad\left.+\de\sqrt{\bar{\mu}_{\de,\rho}}(u')\sqrt{\bar{\mu}_{\de,\rho}}(v')\sqrt{\bar{\mu}_{\de,\rho}}(u)\right.\notag\\
&\qquad \qquad\times\left.\left\|f^{n+1}(s,u')f^{n+1}(s,v')f^{n+1}(s,u)-f^n(s,u')f^n(s,v')f^{n}(s,u)\right\|_{{L^{\infty}_{x}}}\right]d\om duds\notag\\
&=I_{71}+I_{72}+I_{73}+I_{74}.
\end{align}
As usual, we first obtain the decay $e^{-\frac{|u|^2}{4}}$ from $\bar{\mu}_{\de,\rho}$. Then we use the similar split as \eqref{splitting} to get the estimate for $\left\|w_\be f^{n+1}- w_\be f^{n}\right\|_{L^{\infty}_{T}L^{\infty}_{v,x}}$. For
\begin{align*}
\dis I_{71}&=\frac{w_\beta(v)}{\sqrt{\bar{\mu}_{\de,\rho}(v)}}\int_0^t \int_{\R^3}\int_{\S^2}B(v-u,\theta)\sqrt{\bar{\mu}_{\de,\rho}}(u')\sqrt{\bar{\mu}_{\de,\rho}}(v')\notag\\
&\quad\left\|f^{n+1}(s,u')f^{n+1}(s,v')-f^n(s,u')f^n(s,v')\right\|_{{L^{\infty}_{x}}}\left|1+\de\mu_{\de,\rho}(v)+\de\mu_{\de,\rho}(u)\right|d\om duds,
\end{align*}
recalling we have $0\leq \de \mu_{\de,\rho}\leq 1$, \eqref{mumu/mu}, \eqref{splitting} and $w_\be(v)\leq Cw_\be(u')w_\be(v')$, it follows that
\begin{align}\label{I71}
\dis I_{71}&\leq CC_{3\rho}\int_0^t \int_{\R^3}\int_{\S^2}B(v-u,\theta)e^{-\frac{|u|^2}{4}}\notag\\
&\qquad\qquad\qquad\left( \left\|w_\be f^{n+1}\right\|_{L^{\infty}_{T}L^{\infty}_{v,x}}+\left\| w_\be f^{n}\right\|_{L^{\infty}_{T}L^{\infty}_{v,x}} \right)\left\|w_\be f^{n+1}- w_\be f^{n}\right\|_{L^{\infty}_{T}L^{\infty}_{v,x}}d\om duds\notag\\
&\leq CTC_{3\rho}\left( \left\|w_\be f^{n+1}\right\|_{L^{\infty}_{T}L^{\infty}_{v,x}}+\left\| w_\be f^{n}\right\|_{L^{\infty}_{T}L^{\infty}_{v,x}} \right)\left\|w_\be f^{n+1}- w_\be f^{n}\right\|_{L^{\infty}_{T}L^{\infty}_{v,x}},
\end{align}
where $C_{3\rho}$ is defined in \eqref{C3rho}. 

$I_{72}$ can be estimated similarly. The only difference is how we get $e^{-\frac{|u|^2}{4}}$. By direct calculations as \eqref{I321} and \eqref{I322}, we have the following inequalities:
\begin{align*}
\dis w_\be (v)\frac{\sqrt{\bar{{\mu}}_{\de,\rho}}(v')\sqrt{\bar{{\mu}}_{\de,\rho}}(u)}{\sqrt{\bar{{\mu}}_{\de,\rho}}(v)}\mu_{\de,\rho}(v)&=w_\be(v)\frac{\sqrt\rho e^{{|v'|^2}/{4}}}{\de+\rho e^{{|v'|^2}/{2}}}\frac{\sqrt\rho e^{{|u|^2}/{4}}}{\de+\rho e^{{|u|^2}/{2}}}\frac{\de+\rho e^{{|v|^2}/{2}}}{\sqrt\rho e^{{|v|^2}/{4}}}\frac{1}{\de+\rho e^\frac{|v|^2}{2}}\notag\\
&\leq C\,C_{4\rho}e^{-{|u|^2}/{4}},\\
w_\be (v)\frac{\sqrt{\bar{{\mu}}_{\de,\rho}}(v')\sqrt{\bar{{\mu}}_{\de,\rho}}(u)}{\sqrt{\bar{{\mu}}_{\de,\rho}}(v)}\mu_{\de,\rho}(u')&=w_\be(v)\frac{\sqrt\rho e^{{|v'|^2}/{4}}}{\de+\rho e^{{|v'|^2}/{2}}}\frac{\sqrt\rho e^{{|u|^2}/{4}}}{\de+\rho e^{{|u|^2}/{2}}}\frac{\de+\rho e^{{|v|^2}/{2}}}{\sqrt\rho e^{{|v|^2}/{4}}}\frac{1}{\de+\rho e^\frac{|u'|^2}{2}}\notag\\
&\leq C\,C_{4\rho}w_\be(v')w_\be(u')e^{-{|u'|^2}/{4}}e^{-{|u|^2}/{4}}\notag\\
&\leq C\,C_{4\rho}w_\be(v')e^{-{|u|^2}/{4}},
\end{align*}
where $C_{4\rho}$ is given in \eqref{C4rho}. Then similarly as \eqref{I321}, \eqref{I322} and \eqref{I32}, we have
\begin{align}\label{I72}
\dis I_{72}&\leq CC_{4\rho}\int_0^t \int_{\R^3}\int_{\S^2}B(v-u,\theta)e^{-\frac{|u|^2}{4}}\notag\\
&\qquad\qquad\qquad\left( \left\|w_\be f^{n+1}\right\|_{L^{\infty}_{T}L^{\infty}_{v,x}}+\left\| w_\be f^{n}\right\|_{L^{\infty}_{T}L^{\infty}_{v,x}} \right)\left\|w_\be f^{n+1}- w_\be f^{n}\right\|_{L^{\infty}_{T}L^{\infty}_{v,x}}d\om duds\notag\\
&\leq CTC_{4\rho}\left( \left\|w_\be f^{n+1}\right\|_{L^{\infty}_{T}L^{\infty}_{v,x}}+\left\| w_\be f^{n}\right\|_{L^{\infty}_{T}L^{\infty}_{v,x}} \right)\left\|w_\be f^{n+1}- w_\be f^{n}\right\|_{L^{\infty}_{T}L^{\infty}_{v,x}}.
\end{align}

Similar the arguments as in \eqref{I331} and \eqref{I72} show that
\begin{align}\label{I73}
\dis I_{73}\leq CTC_{4\rho}\left( \left\|w_\be f^{n+1}\right\|_{L^{\infty}_{T}L^{\infty}_{v,x}}+\left\| w_\be f^{n}\right\|_{L^{\infty}_{T}L^{\infty}_{v,x}} \right)\left\|w_\be f^{n+1}- w_\be f^{n}\right\|_{L^{\infty}_{T}L^{\infty}_{v,x}}.
\end{align}

Recall
\begin{align}\label{defI74}
I_{74}&=\frac{w_\beta(v)}{\sqrt{\bar{\mu}_{\de,\rho}(v)}}\int_0^t \int_{\R^3}\int_{\S^2}B(v-u,\theta)\de\sqrt{\bar{\mu}_{\de,\rho}}(u')\sqrt{\bar{\mu}_{\de,\rho}}(v')\sqrt{\bar{\mu}_{\de,\rho}}(u)\notag\\
&\qquad \qquad\times\left\|f^{n+1}(s,u')f^{n+1}(s,v')f^{n+1}(s,u)-f^n(s,u')f^n(s,v')f^{n}(s,u)\right\|_{{L^{\infty}_{x}}}d\om duds.
\end{align}
$I_{74}$ contains $|f^{n+1}(u')f^{n+1}(v')f^{n+1}(u)-f^{n}(u')f^{n}(v')f^{n}(u)|$. We divide it into three parts as
\begin{align*}%\label{splitting2}
&\left|f^{n+1}(u')f^{n+1}(v')f^{n+1}(u)-f^n(u')f^n(v')f^{n}(u)\right|\notag\\
\leq&\left|f^{n+1}(v')f^{n+1}(u') \right|\left|f^{n+1}(u)-f^n(u)\right|\notag\\
&\qquad\qquad\qquad+\left|f^{n+1}(u')f^{n}(u) \right|\left|f^{n+1}(v')-f^n(v')\right|+\left|f^{n}(v')f^{n}(u) \right|\left|f^{n+1}(u')-f^n(u')\right|.
\end{align*}
Also it is straightforward to see
\begin{align*}
\dis w_\be(v)\frac{\sqrt{\bar{{\mu}}_{\de,\rho}}(u')\sqrt{\bar{{\mu}}_{\de,\rho}}(v')}{\sqrt{\bar{{\mu}}_{\de,\rho}}(v)}\sqrt{\bar{{\mu}}_{\de,\rho}}(u)&=w_\be(v)\frac{\sqrt\rho e^{{|u'|^2}/{4}}}{\de+\rho e^{{|u'|^2}/{2}}}\frac{\sqrt\rho e^{{|v'|^2}/{4}}}{\de+\rho e^{{|v'|^2}/{2}}}\frac{\de+\rho e^{{|v|^2}/{2}}}{\sqrt\rho e^{{|v|^2}/{4}}}\frac{\sqrt\rho e^{{|u|^2}/{4}}}{\de+\rho e^{{|u|^2}/{2}}}\notag\\
&\leq \frac{C_{3\rho}}{\sqrt\rho}w_\be(v)e^{-{|u'|^2}/{4}}e^{-{|v'|^2}/{4}}e^{{|v|^2}/{4}}e^{-{|u|^2}/{4}}\notag\\
&\leq C\,C_{2\rho}w_\be(u')w_\be(v')e^{-{|u|^2}/{4}}.
\end{align*}
Substituting the above two inequalities into \eqref{defI74}, it holds that
\begin{align}\label{I74}
\dis I_{74}&\leq CTC_{2\rho}\left\|w_\be f^{n+1}- w_\be f^{n}\right\|_{L^{\infty}_{T}L^{\infty}_{v,x}}\notag\\
&\qquad\qquad\times\left( \left\|w_\be f^{n+1}\right\|^2_{L^{\infty}_{T}L^{\infty}_{v,x}}+\left\| w_\be f^{n+1}\right\|_{L^{\infty}_{T}L^{\infty}_{v,x}}\left\| w_\be f^{n}\right\|_{L^{\infty}_{T}L^{\infty}_{v,x}}+\left\| w_\be f^{n}\right\|^2_{L^{\infty}_{T}L^{\infty}_{v,x}} \right).
\end{align}

Finally, \eqref{defI7}, \eqref{I71}, \eqref{I72}, \eqref{I73} and \eqref{I74} yield that
\begin{align}\label{I7}
\dis I_{7}&\leq CT\left(C_{3\rho}+C_{4\rho}\right)\left( \left\|w_\be f^{n+1}\right\|_{L^{\infty}_{T}L^{\infty}_{v,x}}+\left\| w_\be f^{n}\right\|_{L^{\infty}_{T}L^{\infty}_{v,x}} \right)\left\|w_\be f^{n+1}- w_\be f^{n}\right\|_{L^{\infty}_{T}L^{\infty}_{v,x}}\notag\\
&\quad+CTC_{2\rho}\left\|w_\be f^{n+1}- w_\be f^{n}\right\|_{L^{\infty}_{T}L^{\infty}_{v,x}}\notag\\
&\qquad\qquad\times\left( \left\|w_\be f^{n+1}\right\|^2_{L^{\infty}_{T}L^{\infty}_{v,x}}+\left\| w_\be f^{n+1}\right\|_{L^{\infty}_{T}L^{\infty}_{v,x}}\left\| w_\be f^{n}\right\|_{L^{\infty}_{T}L^{\infty}_{v,x}}+\left\| w_\be f^{n}\right\|^2_{L^{\infty}_{T}L^{\infty}_{v,x}} \right)\notag\\
&\leq CT\left(C_{2\rho}+C_{3\rho}+C_{4\rho}\right)\left( \left\|w_\be f_0\right\|_{L^{\infty}_{v,x}}+\left\| w_\be f_0\right\|_{L^{\infty}_{v,x}}^2 \right)\left\|w_\be f^{n+1}- w_\be f^{n}\right\|_{L^{\infty}_{T}L^{\infty}_{v,x}}.
\end{align}
Then substituting \eqref{I345}, \eqref{I6} and \eqref{I7} into \eqref{diff}, it follows that
\begin{align*}%\label{estdiff}
\dis & \left| w_\beta(f^{n+2}-f^{n+1})(t,x,v) \right|\notag\\
&\leq CT\left( \rho^{-\frac{1}{2}}+ \rho^{-\frac{3}{2}}+\rho^{-1} \right)\left(\left\|w_\be f_0\right\|_{L^{\infty}_{v,x}}+\left\|w_\be f_0\right\|^2_{L^{\infty}_{v,x}}\right)\left\|f^{n+1}- f^{n}\right\|_{L^{\infty}_{T}L^{\infty}_{v,x}}\notag\\
&\quad+CT C_{2\rho}\left\|w_\be f^{n+1}- w_\be f^{n}\right\|_{L^{\infty}_{T}L^{\infty}_{v,x}}\notag\\
&\quad+CT\left(C_{2\rho}+C_{3\rho}+C_{4\rho}\right)\left( \left\|w_\be f_0\right\|_{L^{\infty}_{v,x}}+\left\| w_\be f_0\right\|_{L^{\infty}_{v,x}}^2 \right)\left\|w_\be f^{n+1}- w_\be f^{n}\right\|_{L^{\infty}_{T}L^{\infty}_{v,x}}\notag\\
&\leq CTC_{5\rho}\left(1+ \left\|w_\be f_0\right\|_{L^{\infty}_{v,x}}+\left\| w_\be f_0\right\|_{L^{\infty}_{v,x}}^2 \right)\left\|w_\be f^{n+1}- w_\be f^{n}\right\|_{L^{\infty}_{T}L^{\infty}_{v,x}},
\end{align*}
where $C_{5\rho}$ is defined in \eqref{C5rho}.
Then \eqref{CSLemma} follows if we choose
\begin{align*}
T''_1=\min\left\{T'_1,\frac{1}{2C\,C_{5\rho}\left(1+ \left\|w_\be f_0 \right\|_{L^{\infty}_{v,x}}+\left\|w_\be f_0 \right\|_{{L^{\infty}_{v,x}}}^2 \right)}\right\}.
\end{align*}

By Lemma \ref{cauchyseq}, for $(t,x,v)\in [0,T''_1]\times \Omega \times \R^3$, the sequence $\{w_\be f^n\}_{n=1}^\infty$ is a Cauchy sequence. Then we take the limit to obtain a solution $f=f(t,x,v)$ to the quantum Boltzmann equation \eqref{QBE} in the sense of \eqref{mildQBE} satisfying $0\leq F(t,x,v)=\mu_{\de,\rho}(v)+\sqrt{\bar{{\mu}}_{\de,\rho}} f(t,x,v)\leq 1/\de$ and 
\begin{align*}
\left\|w_\be f %\frac{F-\mu}{\sqrt \mu}
\right\|_{{L^{\infty}_{T_1}L^{\infty}_{v,x}}}\leq 2\left\|w_\be f_0 %\frac{F_0-\mu}{\sqrt \mu}
\right\|_{L^{\infty}_{v,x}}.
\end{align*}
Similar arguments as how we prove Lemma \ref{cauchyseq} show that if we choose
\begin{align}\label{conT1}
T_1&\leq \min\left\{T'_1,T''_1,\frac{C'''}{C_{5\rho}\left(1+ \left\|w_\be f_0 \right\|_{L^{\infty}_{v,x}}+\left\|w_\be f_0 \right\|_{L^{\infty}_{v,x}}^2 \right)}\right\},
\end{align}
for some  $C'''=C'''(\be,\ga)$ which is independent of $\rho$, the solution we obtained is unique for $(t,x,v)\in [0,T_1]\times \Omega \times \R^3$. Hence we define
\begin{align*}%\label{defT1}
T_1:=\frac{C_1}{C_{*\rho}\left(1+\|w_\be f_0\|_{L^{\infty}_{v,x}}+\|w_\be f_0\|^2_{L^{\infty}_{v,x}}\right)},
\end{align*}
where $C_1=\frac{1}{2}\min\{C',C'',C''' \}$, $C_{*\rho}=C_{5\rho}$. We see $T_1$ satisfies \eqref{conT1} and \eqref{LE} holds. Thus we finish the proof of Theorem \ref{local}.\qed

\section{Global Existence}
In this section we prove Theorem \ref{global}. First from the mild form \eqref{mildQBE}, we can rewrite the equation for $w_\be f$, which is 
\begin{align}\label{mildQBEw}
\dis w_\be(v) f(t,x,v)=&e^{-\nu_\de(v)t} w_\be(v) f_0(x-vt,v)+\int_0^t e^{-\nu_\de(v)(t-s)} w_\be(v)(K^m_{\de} f)(s,x-v(t-s),v)ds \notag\\
&+\int_0^t e^{-\nu_\de(v)(t-s)} w_\be(v)(K^c_{\de} f)(s,x-v(t-s),v)ds \notag\\
&+\int_0^t e^{-\nu_\de(v)(t-s)} w_\be(v)\Ga_\de(f)(s,x-v(t-s),v)ds,
\end{align}
where $K_\de^m$ and $K_\de^c$ are defined in \eqref{DefKmde} and \eqref{DefKcde} respectively. Throughout this section we assume that $(t,x,v)\in [0,T]\times \Omega \times \R^3$ for positive time $T$. We directly obtain from \eqref{mildQBEw} and Lemma \ref{KdeK0} that 
\begin{align}\label{1stest}
\dis \left|w_\be(v) f(t,x,v)\right|\leq&\left\|w_\be f_0 \right\|_{L^{\infty}_{v,x}}+Cm^{3+\ga} C_{2\rho}\int_0^t e^{-\nu_\de(v)(t-s)}e^{-\frac{|v|^2}{20}}w_\be(v)\|f(s)\|_{L^{\infty}_{v,x}} ds\notag \\
&+C_{2\rho}\int_0^t e^{-\nu_\de(v)(t-s)}\int_{\R^3}\left(\left|l_1(v,\eta)\right|+\left|l_2(v,\eta)\right|\right)w_\beta(v)\left|f(s,x-v(t-s),\eta)\right|d\eta ds\notag\\
&+\int_0^t e^{-\nu_\de(v)(t-s)} w_\be(v)\Ga_\de(f)(s,x-v(t-s),v)ds\notag\\
\leq&\left\|w_\be f_0 \right\|_{L^{\infty}_{v,x}}+Cm^{3+\ga} C_{2\rho}e^{-\frac{|v|^2}{20}}\frac{w_\be(v)}{\nu_\de(v)}\|w_\be f\|_{L^\infty_T L^{\infty}_{v,x}}\notag \\
&+C_{2\rho}\int_0^t e^{-\nu_\de(v)(t-s)}\int_{\R^3}l_w(v,\eta)\left|w_\be(\eta) f(s,x-v(t-s),\eta)\right|d\eta ds\notag\\
&+\int_0^t e^{-\nu_\de(v)(t-s)} w_\be(v)\Ga_\de(f)(s,x-v(t-s),v)ds,
\end{align}
where $l_w$ is defined in \eqref{Deflw}.
Notice that by Lemma \ref{KdeK0} we have $\nu_\de(v)\geq\frac{1}{C} C_{1\rho}(1+|v|)^\ga$. Substituting it into \eqref{1stest} we have 
\begin{align}\label{1stest}
\dis \left|w_\be(v) f(t,x,v)\right|\leq&\left\|w_\be f_0 \right\|_{L^{\infty}_{v,x}}+Cm^{3+\ga} \frac{C_{2\rho}}{C_{1\rho}}e^{-\frac{|v|^2}{20}}\frac{w_\be(v)}{(1+|v|)^\ga}\|w_\be f\|_{L^\infty_T L^{\infty}_{v,x}}\notag \\
&+C_{2\rho}\int_0^t e^{-\nu_\de(v)(t-s)}\int_{\R^3}l_w(v,\eta)\left|w_\be(\eta) f(s,x-v(t-s),\eta)\right|d\eta ds\notag\\
&+\int_0^t e^{-\nu_\de(v)(t-s)} w_\be(v)\left|\Ga_\de(f)(s,x-v(t-s),v)\right|ds\notag\\
\leq&\left\|w_\be f_0 \right\|_{L^{\infty}_{v,x}}+Cm^{3+\ga} \frac{C_{2\rho}}{C_{1\rho}}e^{-\frac{|v|^2}{30}}\|w_\be f\|_{L^\infty_T L^{\infty}_{v,x}}\notag \\
&+C_{2\rho}\int_0^t e^{-\nu_\de(v)(t-s)}\int_{\R^3}l_w(v,\eta)\left|w_\be(\eta) f(s,x-v(t-s),\eta)\right|d\eta ds\notag\\
&+\int_0^t e^{-\nu_\de(v)(t-s)} w_\be(v)\left|\Ga_\de(f)(s,x-v(t-s),v)\right|ds\notag\\
=&\left\|w_\be f_0 \right\|_{L^{\infty}_{v,x}}+Cm^{3+\ga} \frac{C_{2\rho}}{C_{1\rho}}e^{-\frac{|v|^2}{30}}\|w_\be f\|_{L^\infty_T L^{\infty}_{v,x}}+J_1+J_2.
\end{align}
Here we can see that in order to obtain the pointwise bound, we need to have good estimate on $w_\be(v)\left|\Ga_\de(f)(s,x-v(t-s),v)\right|$. Thus we establish the following Lemma in order to take care of this term.
\begin{lemma}\label{Gaest}
For any $p>\frac{3}{3+\ga}$, let $-3<\ga<0$, $\be > \max\{6,16/(5p-1)\}$, it holds that
\begin{align*}%\label{estGa}
\dis
\left|w_\be(v)\Ga_\de (f)(v)\right|&\leq C\,C_{5\rho}\nu(v)\left(1+\|w_\be f\|_{L^{\infty}_{v}}\right)\notag\\
&\quad\times\left\{\|w_\be f\|^{\frac{2p-1}{p}}_{L^{\infty}_{v}}\left(\int_{\R^3}\left|f(u)\right|du\right)^{\frac{1}{p}}+\|w_\be f\|^{\frac{10p-1}{5p}}_{L^{\infty}_{v}}\left(\int_{\R^3}\left|f(u)\right|du\right)^{\frac{1}{5p}}\right\},
\end{align*}
where $C_{5\rho}$ is defined in \eqref{C5rho}.
\end{lemma}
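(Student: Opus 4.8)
The plan is to reduce, term by term according to the explicit expansion \eqref{DefGade}, the estimate of $w_\be(v)\Ga_\de(f)(v)$ to a finite sum of ``classical'' bilinear and trilinear integrals of the schematic form
\begin{align*}
\int_{\R^3}\int_{\S^2}|v-u|^\ga e^{-\frac{|u|^2}{4}}\,\big|w_\be f(\star_1)\big|\,\big|w_\be f(\star_2)\big|\,\big(1+\big|w_\be f(\star_3)\big|\big)\,d\om\,du,\qquad \star_j\in\{u,u',v',v\},
\end{align*}
and then to bound these by the right-hand side of the lemma. I would first group the eleven terms of \eqref{DefGade} into the genuinely bilinear terms with an $O(1)$ coefficient, the $\de$-weighted bilinear terms whose coefficient is a difference $\mu_{\de,\rho}(\cdot)-\mu_{\de,\rho}(\cdot)$, and the $\de$-weighted trilinear terms. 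For each term I extract the Gaussian $e^{-|u|^2/4}$ (and, for the terms carrying neither $f(u)$ nor $f(v)$, an extra $e^{-|v'|^2/4}$ or $e^{-|v|^2/4}$) from the quotients $\sqrt{\bar\mu_{\de,\rho}}(\cdot)\sqrt{\bar\mu_{\de,\rho}}(\cdot)/\sqrt{\bar\mu_{\de,\rho}}(v)$ exactly as in \eqref{mumu/mu}, \eqref{I321}, \eqref{I322}, \eqref{I331} and \eqref{getdecay}, using $0\le\de\mu_{\de,\rho}\le1$ for the bounded coefficients and $\mu_{\de,\rho}\le\rho^{-1}\mu_0$ for the $\mu$-difference ones; the $\rho$-dependence produced this way is $C_{2\rho}$, $C_{3\rho}$ or $C_{4\rho}$, whose sum is $C_{5\rho}$ by \eqref{C5rho}. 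The weight $w_\be(v)$ is distributed via $w_\be(v)\le Cw_\be(u')w_\be(v')$ (from $|v|^2\le|u'|^2+|v'|^2$) and absorbed into the $f$-factors, while $w_\be(v)e^{-|v|^2/4}\le C$ disposes of the $f(v)$-terms; this leaves precisely the classical integrals above, uniformly in $\de$.

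For a classical integral I would bound all but one of the $|w_\be f|$-factors by $\|w_\be f\|_{L^\infty_v}$ and interpolate the remaining one between $L^\infty_v$ and $L^1_v$. When that factor is attached to the integration variable $u$ --- which happens for the bilinear terms containing $f(u)$ and, after pulling out the prefactor $\|w_\be f\|_{L^\infty_v}$, for the trilinear terms containing $f(u)$ or $f(v)$ --- I write $|f(u)|=|f(u)|^{1/p}|f(u)|^{1-1/p}\le|f(u)|^{1/p}\|w_\be f\|_{L^\infty_v}^{1-1/p}(1+|u|)^{-\be(1-1/p)}$ and apply H\"older in $u$ with exponents $p$, $p'=p/(p-1)$, so that $\big(\int_{\R^3}|f(u)|\,du\big)^{1/p}$ comes out and the remaining factor is
\begin{align*}
\left(\int_{\R^3}|v-u|^{\ga p'}e^{-\frac{p'|u|^2}{4}}(1+|u|)^{-\be(p-1)}\,du\right)^{1/p'}\le C\,\nu(v),
\end{align*}
the $u$-integral being finite precisely because the hypothesis $p>3/(3+\ga)$ is equivalent to $\ga p'>-3$ (the Gaussian handles the tail). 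Counting powers of $\|w_\be f\|_{L^\infty_v}$ then gives the first bracketed term $\|w_\be f\|^{(2p-1)/p}_{L^\infty_v}\big(\int|f|\,du\big)^{1/p}$, multiplied by the factor $(1+\|w_\be f\|_{L^\infty_v})$ arising from the trilinear contributions.

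The delicate terms are the pure-gain ones in which $f$ is evaluated only at the post-collision velocities $u'$ and $v'$, namely the first bilinear term $\sqrt{\bar\mu_{\de,\rho}}f(u')\sqrt{\bar\mu_{\de,\rho}}f(v')$ and the last trilinear term: there the Gaussian decay lives on $u$ but no $f$-factor is attached to $u$, so the direct H\"older argument is unavailable. For these I would retain $|w_\be f(v')|$ and pass to a Carleman-type representation, changing variables from $(u,\om)$ to the post-collision velocity so as to expose an integral $\int_{\R^3}|f|$; the resulting kernel and Jacobian are less favourable, which is what forces both the weaker exponent $1/(5p)$ on the $L^1_v$-norm and the stronger lower bound $\be>16/(5p-1)$ needed to make the accompanying velocity integral converge, and which yields the second bracketed term $\|w_\be f\|^{(10p-1)/(5p)}_{L^\infty_v}\big(\int|f|\,du\big)^{1/(5p)}$. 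I expect this change-of-variables step for the pure-gain terms to be the main obstacle; the remaining work --- the bookkeeping of the eleven terms and the collection of all $\rho$-constants into a single $C_{5\rho}\nu(v)$ --- is routine given Lemma \ref{muequi}, Lemma \ref{KdeK0} and the computations already carried out in Section~3.
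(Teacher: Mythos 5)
Your plan coincides with the paper's proof in all essentials: the same grouping into loss-type terms (those carrying $f(u)$, the paper's $J_{21}$) and gain-type terms (those carrying only $f(u'),f(v')$ or $f(v')f(v)$, the paper's $J_{22}$), the same extraction of $e^{-|u|^2/4}$ and of the constants $C_{2\rho},C_{3\rho},C_{4\rho}$ summing to $C_{5\rho}$, and the same H\"older step with exponents $p$ and $p/(p-1)$ for the $f(u)$-terms, where $p>3/(3+\ga)$ is exactly what makes $\int|v-u|^{\ga p/(p-1)}e^{-|u|^2/4}\,du$ finite and produces $\nu(v)\|w_\be f\|_{L^\infty_v}^{(p-1)/p}\big(\int|f|\big)^{1/p}$.

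The one step you leave open --- the pure-gain terms --- is resolved in the paper not by a full Carleman representation but by the simpler standard decomposition $z=u-v$, $z_{\para}=[(u-v)\cdot\om]\om$, $z_\perp=z-z_{\para}$, $\eta=v+z_{\para}=v'$: after a first H\"older in $(u,\om)$ extracting $\nu(v)$, the change of variables $(u,\om)\mapsto(\eta,z_\perp)$ carries the Jacobian $|\eta-v|^{-2}$, and a second H\"older in $\eta$ with exponents $5/4$ and $5$ (inserting $(1+|\eta|)^{-4}$ against $|\eta-v|^{-5/2}$, which is locally integrable, and compensating with $(1+|\eta|)^{16}$ in the conjugate factor) gives
\begin{align*}
\Big(\int_{\R^3}\int_{\S^2}e^{-\frac{|u|^2}{4}}|f(v')|^p\,d\om\,du\Big)^{\frac1p}\leq C\,\|w_{16/(5p-1)}f\|_{L^\infty_v}^{\frac{5p-1}{5p}}\Big(\int_{\R^3}|f|\,du\Big)^{\frac{1}{5p}},
\end{align*}
which is precisely the source of the exponent $1/(5p)$ and of the hypothesis $\be>16/(5p-1)$ that you correctly anticipated. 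Your proposal is therefore the paper's argument with this computation supplied; the only cosmetic difference is that the paper first uses the rotation symmetry $u'\leftrightarrow v'$ to cut the ten terms of \eqref{DefGade} down to the eight appearing in $J_{21}$ and $J_{22}$.
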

\begin{proof}
From the definition of $\Ga_\de$ \eqref{DefGade} and the fact that $0\leq\max\left\{\de,\de\mu_{\de,\rho}(v)\right\}\leq 1$ we have
\begin{align*}
\dis
&\left|w_\be(v)\Ga_\de (f)(v)\right|\leq\frac{C}{\sqrt{\bar{\mu}_{\de,\rho}(v)}}\int_{\R^3}\int_{\S^2}B(v-u,\theta)\notag\\
&\quad\left[w_\be(v)\left|\sqrt{\bar{\mu}_{\de,\rho}}f(u')\sqrt{\bar{\mu}_{\de,\rho}}f(v')\right|+w_\be(v)\left|\sqrt{\bar{\mu}_{\de,\rho}}f(u)\sqrt{\bar{\mu}_{\de,\rho}}f(v)\right|\right. \notag\\
&\quad+ w_\be(v)\left|\sqrt{\bar{\mu}_{\de,\rho}}f(v')\sqrt{\bar{\mu}_{\de,\rho}}f(u)\right|\left|\mu_{\de,\rho}(v)+\mu_{\de,\rho}(u')\right|\notag\\
&\quad+w_\be(v)\left|\sqrt{\bar{\mu}_{\de,\rho}}f(u')\sqrt{\bar{\mu}_{\de,\rho}}f(u)\right|\left|\mu_{\de,\rho}(v)+\mu_{\de,\rho}(v')\right|\notag\\
&\quad+ w_\be(v)\left|\sqrt{\bar{\mu}_{\de,\rho}}f(v')\sqrt{\bar{\mu}_{\de,\rho}}f(v)\right|\left|\mu_{\de,\rho}(u)+\mu_{\de,\rho}(u')\right|\notag\\
&\quad+ w_\be(v)\left|\sqrt{\bar{\mu}_{\de,\rho}}f(u')\sqrt{\bar{\mu}_{\de,\rho}}f(v)\right|\left|\mu_{\de,\rho}(u)+\mu_{\de,\rho}(v')\right|\notag\\
&\quad+ w_\be(v)\left|\sqrt{\bar{\mu}_{\de,\rho}}f(u)\sqrt{\bar{\mu}_{\de,\rho}}f(v)\sqrt{\bar{\mu}_{\de,\rho}}f(u')\right|+w_\be(v)\left|\sqrt{\bar{\mu}_{\de,\rho}}f(u)\sqrt{\bar{\mu}_{\de,\rho}}f(v)\sqrt{\bar{\mu}_{\de,\rho}}f(v')\right|\notag\\
&\quad\left.+w_\be(v)\left|\sqrt{\bar{\mu}_{\de,\rho}}f(u')\sqrt{\bar{\mu}_{\de,\rho}}f(v')\sqrt{\bar{\mu}_{\de,\rho}}f(u)\right|+w_\be(v)\left|\sqrt{\bar{\mu}_{\de,\rho}}f(u')\sqrt{\bar{\mu}_{\de,\rho}}f(v')\sqrt{\bar{\mu}_{\de,\rho}}f(v)\right|\right]d\om du.
\end{align*} 
There are ten terms above. In order to simplify the calculations, we can replace the terms that include $\left|\sqrt{\bar{\mu}_{\de,\rho}}f(u')\sqrt{\bar{\mu}_{\de,\rho}}f(u)\right|\left|\mu_{\de,\rho}(v)+\mu_{\de,\rho}(v')\right|$ and $\left|\sqrt{\bar{\mu}_{\de,\rho}}f(u')\sqrt{\bar{\mu}_{\de,\rho}}f(v)\right|\left|\mu_{\de,\rho}(u)+\mu_{\de,\rho}(v')\right|$ in the third row and fourth row by 
\begin{center}
$\left|\sqrt{\bar{\mu}_{\de,\rho}}f(v')\sqrt{\bar{\mu}_{\de,\rho}}f(u)\right|\left|\mu_{\de,\rho}(v)+\mu_{\de,\rho}(u')\right|$ and $\left|\sqrt{\bar{\mu}_{\de,\rho}}f(v')\sqrt{\bar{\mu}_{\de,\rho}}f(v)\right|\left|\mu_{\de,\rho}(u)+\mu_{\de,\rho}(u')\right|$
\end{center} respectively since we can exchange $u'$ and $v'$ by a rotation. Then it follows that
\begin{align}\label{Ga1}
\dis
&\left|w_\be(v)\Ga_\de (f)(v)\right|\leq J_{21}+J_{22},
\end{align} 
where
\begin{align*}%\label{defJ21}
\dis
& J_{21}=\frac{C}{\sqrt{\bar{\mu}_{\de,\rho}(v)}}\int_{\R^3}\int_{\S^2}B(v-u,\theta)\left[w_\be(v)\left|\sqrt{\bar{\mu}_{\de,\rho}}f(u)\sqrt{\bar{\mu}_{\de,\rho}}f(v)\right|\right.\notag\\
&\quad + w_\be(v)\left|\sqrt{\bar{\mu}_{\de,\rho}}f(v')\sqrt{\bar{\mu}_{\de,\rho}}f(u)\right|\left|\mu_{\de,\rho}(v)+\mu_{\de,\rho}(u')\right|+ w_\be(v)\left|\sqrt{\bar{\mu}_{\de,\rho}}f(u)\sqrt{\bar{\mu}_{\de,\rho}}f(v)\sqrt{\bar{\mu}_{\de,\rho}}f(u')\right|\notag\\
&\quad\left.+w_\be(v)\left|\sqrt{\bar{\mu}_{\de,\rho}}f(u)\sqrt{\bar{\mu}_{\de,\rho}}f(v)\sqrt{\bar{\mu}_{\de,\rho}}f(v')\right|+w_\be(v)\left|\sqrt{\bar{\mu}_{\de,\rho}}f(u')\sqrt{\bar{\mu}_{\de,\rho}}f(v')\sqrt{\bar{\mu}_{\de,\rho}}f(u)\right|\right]d\om du,
\end{align*} 
and
\begin{align*}%\label{defJ22}
\dis
 J_{22}=&\frac{C}{\sqrt{\bar{\mu}_{\de,\rho}(v)}}\int_{\R^3}\int_{\S^2}B(v-u,\theta)\left[w_\be(v)\left|\sqrt{\bar{\mu}_{\de,\rho}}f(u')\sqrt{\bar{\mu}_{\de,\rho}}f(v')\right|\right.\notag\\
&\qquad\qquad \qquad + w_\be(v)\left|\sqrt{\bar{\mu}_{\de,\rho}}f(v')\sqrt{\bar{\mu}_{\de,\rho}}f(v)\right|\left|\mu_{\de,\rho}(u)+\mu_{\de,\rho}(u')\right|\notag\\
&\qquad \qquad \qquad \left.+w_\be(v)\left|\sqrt{\bar{\mu}_{\de,\rho}}f(u')\sqrt{\bar{\mu}_{\de,\rho}}f(v')\sqrt{\bar{\mu}_{\de,\rho}}f(v)\right|\right]d\om du.
\end{align*} 
It follows by similar arguments as in \eqref{I71}, \eqref{I72}, \eqref{I73} and \eqref{I74} that
\begin{align}\label{1stJ21}
\dis
& J_{21}\leq C\left(C_{3\rho} +C_{4\rho}+C_{2\rho}\|w_\be f\|_{L^{\infty}_{v}}\right)\|w_\be f\|_{L^{\infty}_{v}}\int_{\R^3}\int_{\S^2}B(v-u,\theta)e^{-\frac{|u|^2}{4}}\left|f(u)\right|d\om du.
\end{align} 
We choose a constant $p$ such that $p>\frac{3}{3+\ga}$, then by H\"older's inequality it holds that
\begin{align}\label{2ndJ21}
&\int_{\R^3}\int_{\S^2}B(v-u,\theta)e^{-\frac{|u|^2}{4}}\left|f(u)\right|d\om du\notag\\
&\leq C\left(\int_{\R^3}|v-u|^{\frac{\ga p}{p-1}} e^{-\frac{|u|^2}{4}} du\right)^{\frac{p-1}{p}}\left(\int_{\R^3}\int_{\S^2} \left|f(u)\right|^pd\om du\right)^{\frac{1}{p}}\notag\\
&\leq C\nu(v)\|w_\be f\|_{L^{\infty}_{v}}^{\frac{p-1}{p}}\left(\int_{\R^3}\left|f(u)\right| du\right)^{\frac{1}{p}}.
\end{align}
In the last inequality above we have used the fact that
$$
\left(\int_{\R^3}|v-u|^{\frac{\ga p}{p-1}} e^{-\frac{|u|^2}{4}} du\right)^{\frac{p-1}{p}}\leq (1+|v|)^\ga \leq C\nu(v)
$$
since $-3<\frac{\ga p}{p-1}<0$ by our choice of $p$. Then it follows from \eqref{1stJ21} and \eqref{2ndJ21} that
\begin{align}\label{J21}
\dis
& J_{21}\leq C\nu(v)\left(C_{3\rho} +C_{4\rho}+C_{2\rho}\|w_\be f\|_{L^{\infty}_{v}}\right)\|w_\be f\|^{\frac{2p-1}{p}}_{L^{\infty}_{v}}\left(\int_{\R^3}\left|f(u)\right|du\right)^{\frac{1}{p}}.
\end{align} 
Notice that since $|v|^2\leq |u'|^2+|v'|^2$, either $|v|^2\leq 2|u'|^2$ or $|v|^2\leq 2|v'|^2$. Then there exists a positive constant $C$ such that $w_\be(v)\leq w_\be(v)\chi_{\{ |v|^2\leq 2|u'|^2 \}}+w_\be(v)\chi_{\{ |v|^2\leq 2|v'|^2 \}}\leq C\left(w_\be(u')+w_\be(v')\right)$. Also we can exchange $u'$ and $v'$ by a rotation. Then similarly as above we have
\begin{align}\label{1stJ22}
\dis
 J_{22}\leq&\frac{C}{\sqrt{\bar{\mu}_{\de,\rho}(v)}}\int_{\R^3}\int_{\S^2}B(v-u,\theta)\left[\left(w_\be(u')+w_\be(v')\right)\left|\sqrt{\bar{\mu}_{\de,\rho}}f(u')\sqrt{\bar{\mu}_{\de,\rho}}f(v')\right|\right.\notag\\
&\qquad\qquad \qquad + w_\be(v)\left|\sqrt{\bar{\mu}_{\de,\rho}}f(v')\sqrt{\bar{\mu}_{\de,\rho}}f(v)\right|\left|\mu_{\de,\rho}(u)+\mu_{\de,\rho}(u')\right|\notag\\
&\qquad \qquad \qquad \left.+w_\be(v)\left|\sqrt{\bar{\mu}_{\de,\rho}}f(u')\sqrt{\bar{\mu}_{\de,\rho}}f(v')\sqrt{\bar{\mu}_{\de,\rho}}f(v)\right|\right]d\om du\notag\\
&\leq C\left(C_{3\rho} +C_{4\rho}+C_{2\rho}\|w_\be f\|_{L^{\infty}_{v}}\right)\int_{\R^3}\int_{\S^2}B(v-u,\theta)e^{-\frac{|u|^2}{4}}\left|f(v')\right|d\om du.
\end{align} 
It holds from H\"older's inequality that
\begin{align}\label{2ndJ22}
&\int_{\R^3}\int_{\S^2}B(v-u,\theta)e^{-\frac{|u|^2}{4}}\left|f(v')\right|d\om du\notag\\
&\leq C\left(\int_{\R^3}|v-u|^{\frac{\ga p}{p-1}} e^{-\frac{|u|^2}{4}} du\right)^{\frac{p-1}{p}}\left(\int_{\R^3}\int_{\S^2} e^{-\frac{|u|^2}{4}}\left|f(v')\right|^pd\om du\right)^{\frac{1}{p}}\notag\\
&\leq C\nu(v)\left(\int_{\R^3}\int_{\S^2} e^{-\frac{|u|^2}{4}}\left|f(v')\right|^pd\om du\right)^{\frac{1}{p}}.
\end{align}
Denote $z=u-v$, $z_{\shortparallel}=(u-v)\cdot \omega$, $z_{\perp}=z-z_{\para}$, $\eta=v+z_\para$. We change variables to yield
\begin{align}\label{3rdJ22}
&\left(\int_{\R^3}\int_{\S^2} e^{-\frac{|u|^2}{4}}\left|f(v')\right|^pd\om du\right)^{\frac{1}{p}}\notag\\
&=\left( \int_{\R^3}\int_{z_\perp}e^{-\frac{|z_\perp+\eta|^2}{4}}dz_\perp|f(\eta)|^p\frac{1}{|\eta-v|^{2}} d\eta\right)^{\frac{1}{p}}\notag\\
&\leq C \left( \int_{\R^3}\frac{(1+|\eta|)^{-4}}{|\eta-v|^{\frac{5}{2}}} d\eta\right)^{\frac{4}{5p}}\left( \int_{\R^3}(1+|\eta|)^{16}|f(\eta)|^{5p} d\eta\right)^{\frac{1}{5p}}\notag\\
&\leq C\|w_{\frac{16}{5p-1}} f\|_{L^{\infty}_{v}}^{\frac{5p-1}{5p}}\left(\int_{\R^3}\left|f(u)\right| du\right)^{\frac{1}{5p}}.
\end{align}
By choosing $\be>\frac{16}{5p-1}$, it follows from \eqref{1stJ22}, \eqref{2ndJ22} and \eqref{3rdJ22} that
\begin{align}\label{J22}
\dis
& J_{22}\leq C\nu(v)\left(C_{3\rho} +C_{4\rho}+C_{2\rho}\|w_\be f\|_{L^{\infty}_{v}}\right)\|w_\be f\|^{\frac{10p-1}{5p}}_{L^{\infty}_{v}}\left(\int_{\R^3}\left|f(u)\right|du\right)^{\frac{1}{5p}}.
\end{align} 
Our Lemma follows from \eqref{J21}, \eqref{J22} and the definition of $C_{5\rho}$ that $C_{5\rho}=C_{2\rho} +C_{3\rho}+C_{4\rho}$.
\end{proof}
Using the pointwise inequality \eqref{1stest}, we have the following lemma.

\begin{lemma}\label{globalest}
Let $-3<\ga<0$, $\be > \max\{3,3-\ga,16/(5p-1)\}$, and the constant $p$ be given in Lemma \ref{Gaest}. It holds that
\begin{align}\label{lemmaglobal}
&\dis \|w_\be f\|_{L^\infty_T L^{\infty}_{v,x}}\leq C_3 C_{6\rho}\left(\left\|w_\be f_0 \right\|_{L^{\infty}_{v,x}}+\left\|w_\be f_0 \right\|_{L^{\infty}_{v,x}}^2+\left\|w_\be f_0 \right\|_{L^{\infty}_{v,x}}^3\right)\notag\\
&\quad+ C_3 C_{6\rho}\left(1+\|w_\be f\|_{L^\infty_TL^{\infty}_{v,x}}\right)\left\{\|w_\be f\|^{\frac{2p-1}{p}}_{L^\infty_TL^{\infty}_{v,x}}\| f\|_{L^\infty_{T_1,T}L^{\infty}_{x}L^{1}_{v}}^{\frac{1}{p}}+\|w_\be f\|^{\frac{10p-1}{5p}}_{{L^\infty_TL^{\infty}_{v,x}}{L^{\infty}_{v,x}}}\| f\|_{{L^\infty_{T_1,T}}L^{\infty}_{x}L^{1}_{v}}^{\frac{1}{5p}}\right\}\notag\\
&\quad+C_3 C_{\rho,N_\rho}\left( \la_\rho^{-\frac{3}{2}}\sqrt{\CE_{\de,\rho}(F_0)}+ \la_\rho^{-3}\CE_{\de,\rho}(F_0)\right),
\end{align}
where the time $T_1$ is given in \eqref{T_1}, the constants $C_{6\rho}$, $C_{\rho,N}$, $N_\rho$ and $\la_\rho$ are defined in \eqref{C6rho}, \eqref{CrhoN}, \eqref{defNrho} and \eqref{deflarho} respectively, and $C_3=C_3(\be,\ga)$.
\end{lemma}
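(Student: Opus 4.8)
The plan is to take the supremum over $(t,x,v)\in[0,T]\times\Omega\times\R^3$ in the pointwise bound \eqref{1stest} and to control its three pieces, namely $Cm^{3+\ga}C_{2\rho}C_{1\rho}^{-1}e^{-|v|^2/30}\|w_\be f\|_{L^\infty_TL^\infty_{v,x}}$, $J_1$ and $J_2$, one at a time. First I would fix once and for all the cut-off parameter $m=m_\rho\in(0,1]$, depending only on $\ga$ and on $\rho$ through $C_{1\rho},C_{2\rho}$, small enough that $Cm_\rho^{3+\ga}C_{2\rho}C_{1\rho}^{-1}\le\tfrac14$; this is possible because $3+\ga>0$, and then the first piece of \eqref{1stest} is bounded by $\tfrac14\|w_\be f\|_{L^\infty_TL^\infty_{v,x}}$, to be absorbed into the left-hand side at the very end. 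Throughout, the collision-frequency weight $\nu(v)\sim(1+|v|)^\ga$ produced by Lemma \ref{Gaest}, and the decay $\int_{\R^3}l_w(v,\eta)\,d\eta\le Cm_\rho^{\ga-1}\nu(v)(1+|v|)^{-2}$ coming from \eqref{Prol1}, are paired with $\int_0^te^{-\nu_\de(v)(t-s)}\,ds\le\nu_\de(v)^{-1}\le\la_\rho^{-1}(1+|v|)^{-\ga}$, where $\la_\rho$ is the lower-bound constant of $\nu_\de$ from Lemma \ref{KdeK0}; the resulting products are uniformly bounded in $v$ and in $T$.

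For the nonlinear term $J_2$ I would use Lemma \ref{Gaest} together with the bound $\int_{\R^3}|f(s,y,u)|\,du\le\bigl(\int_{\R^3}w_\be(u)^{-1}\,du\bigr)\|w_\be f(s)\|_{L^\infty_{v,x}}\le C\|w_\be f(s)\|_{L^\infty_{v,x}}$ (the integral being finite since $\be>3$), and split the time integral at $T_1$ as $\int_0^t=\int_0^{\min\{t,T_1\}}+\int_{T_1}^t$. On $[0,T_1]$ the local bound \eqref{LE} gives $\|w_\be f(s)\|_{L^\infty_{v,x}}\le 2\|w_\be f_0\|_{L^\infty_{v,x}}$, and using the identities $\tfrac{2p-1}{p}+\tfrac1p=2$ and $\tfrac{10p-1}{5p}+\tfrac1{5p}=2$ the corresponding contribution has size $C_3C_{6\rho}\bigl(\|w_\be f_0\|_{L^\infty_{v,x}}^2+\|w_\be f_0\|_{L^\infty_{v,x}}^3\bigr)$, part of the first line of \eqref{lemmaglobal}. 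On $[T_1,t]\subseteq[T_1,T]$ I would instead keep $\|w_\be f(s)\|_{L^\infty_{v,x}}\le\|w_\be f\|_{L^\infty_TL^\infty_{v,x}}$ and $\int_{\R^3}|f(s,y,u)|\,du\le\|f\|_{L^\infty_{T_1,T}L^\infty_xL^1_v}$, which yields exactly the coupled term on the second line of \eqref{lemmaglobal}.

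The term $J_1$, carrying the $K^c_\de$ part, is where the smallness of $\CE_{\de,\rho}(F_0)$ must be extracted, and is the main obstacle. I would iterate once, substituting for $w_\be(\eta)f(s,x-v(t-s),\eta)$ inside $J_1$ the mild identity \eqref{mildQBEw} at the point $(s,x-v(t-s),\eta)$, which produces four contributions. The one from the initial datum is linear in $\|w_\be f_0\|_{L^\infty_{v,x}}$ (using $\int_{\R^3}l_w(v,\eta)\,d\eta\le Cm_\rho^{\ga-1}\nu(v)(1+|v|)^{-2}$) and enters the first line of \eqref{lemmaglobal}; the one from $K^m_\de$ carries a factor $m_\rho^{3+\ga}$ (by $|K^m_\de f|\le Cm_\rho^{3+\ga}C_{2\rho}e^{-|\cdot|^2/20}\|f\|_{L^\infty_v}$ from Lemma \ref{KdeK0}) and is absorbed into the left-hand side together with the first piece of \eqref{1stest}, after $m_\rho$ is taken small enough; the one from $\Ga_\de$ is handled exactly as $J_2$ by splitting its inner time integral at $T_1$, again landing in the first two lines. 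The genuine difficulty is the double $K^c_\de$--$K^c_\de$ integral, in which $f$ appears at the spatial point $x-v(t-s)-\eta(s-s')$ (independent of the innermost velocity $\eta'$) under the two kernels $l_w(v,\eta)$ and $l_w(\eta,\eta')$,
\begin{align*}
C_{2\rho}^2\int_0^t\!\!\int_0^s & e^{-\nu_\de(v)(t-s)}e^{-\nu_\de(\eta)(s-s')}\\
&\times\int_{\R^3}\!\!\int_{\R^3}l_w(v,\eta)\,l_w(\eta,\eta')\,\big|w_\be(\eta')f\big(s',x-v(t-s)-\eta(s-s'),\eta'\big)\big|\,d\eta'\,d\eta\,ds'\,ds .
\end{align*}
For this I would (i) restrict to $|v|,|\eta|,|\eta'|\le N_\rho$, the complementary pieces being negligible after $N_\rho$ is chosen large (depending on $\rho$ and $m_\rho$) thanks to the Gaussian factors in the bounds of Lemma \ref{KmKc} and the decay of $e^{-\nu_\de}$; (ii) split off the short-time region $s-s'\le\kappa_\rho$, estimated crudely by a multiple of $\kappa_\rho\|w_\be f\|_{L^\infty_TL^\infty_{v,x}}$ and absorbed; and (iii) on the remaining region, with $s,s'$ frozen, change variables $\eta\mapsto z:=x-v(t-s)-\eta(s-s')$, turning $d\eta$ into $(s-s')^{-3}\,dz$ and hence the $\eta$-integration into a spatial integration of $|f|$, the residual time factor $\int_0^s(s-s')^{-3}\mathbf{1}_{\{s-s'\ge\kappa_\rho\}}\,ds'\le(2\kappa_\rho^2)^{-1}$ being finite so that $T$-uniformity is preserved. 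Finally I would decompose $f=f\chi_{\{|F-\mu_{\de,\rho}|\le\mu_{\de,\rho}\}}+f\chi_{\{|F-\mu_{\de,\rho}|>\mu_{\de,\rho}\}}$ and invoke Lemma \ref{Taylor}: since $\bar{\mu}_{\de,\rho}/\mu_{\de,\rho}=1-\de\mu_{\de,\rho}\in[\rho/(\rho+1),1]$, the bounded piece satisfies $\int_\Omega\int_{\R^3}f^2\chi\,dv\,dx\le C_\rho\CE_{\de,\rho}(F_0)$ and a Cauchy--Schwarz in the space and $\eta'$ variables gives the term $C_3C_{\rho,N_\rho}\la_\rho^{-3/2}\sqrt{\CE_{\de,\rho}(F_0)}$, while for the large piece $\int_\Omega\int_{\R^3}\sqrt{\bar{\mu}_{\de,\rho}}\,|f|\,\chi\,dv\,dx\le 4\CE_{\de,\rho}(F_0)$ together with the boundedness of $l_w\,w_\be/\sqrt{\bar{\mu}_{\de,\rho}}$ on $\{|\eta'|\le N_\rho\}$ gives $C_3C_{\rho,N_\rho}\la_\rho^{-3}\CE_{\de,\rho}(F_0)$ --- the third line of \eqref{lemmaglobal}.

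Collecting the three groups of estimates, taking the supremum over $(t,x,v)$, and absorbing $\tfrac14\|w_\be f\|_{L^\infty_TL^\infty_{v,x}}$ together with the $m_\rho^{3+\ga}$ pieces into the left-hand side yields \eqref{lemmaglobal}, with $C_{6\rho}$, $C_{\rho,N_\rho}$, $N_\rho$, $\la_\rho$ collecting the $\rho$- and $m_\rho$-dependence as in \eqref{C6rho}, \eqref{CrhoN}, \eqref{defNrho}, \eqref{deflarho} and $C_3=C_3(\be,\ga)$ independent of $\rho$. I expect step (iii) above to be the main obstacle: carrying out the spatial change of variables on the core region while keeping explicit track of how the Jacobian lower bound, the cut-offs $N_\rho$ and $\kappa_\rho$, and the lower bound $\la_\rho$ of $\nu_\de$ depend on the quantum parameter through $C_{1\rho}$ and $C_{2\rho}$, so that the constants in \eqref{lemmaglobal} are genuinely finite and Lemma \ref{Taylor} can be applied uniformly in $0<\de\le1$; the stated range of $\be$ is precisely what is needed for all the $\eta$- and $\eta'$-integrals of $w_\be^{-1}$, $l_w$ and $k_w$ to converge and for Lemma \ref{Gaest} to apply.
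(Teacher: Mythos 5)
Your overall architecture coincides with the paper's: start from the pointwise bound \eqref{1stest}, absorb the $K^m_\de$ piece by choosing $m=m_\rho$ small, estimate the nonlinear term via Lemma \ref{Gaest} with the time integral split at $T_1$ (using \eqref{LE} on $[0,T_1]$ to produce the $\|w_\be f_0\|^2_{L^\infty_{v,x}}+\|w_\be f_0\|^3_{L^\infty_{v,x}}$ terms), iterate the Duhamel formula once inside $J_1$, cut off large velocities and short time lags in the resulting double $K^c_\de$ integral, change variables $\eta\mapsto y=x_1-\eta(s-s_1)$, and conclude with the entropy bound of Lemma \ref{Taylor}. However, there is one genuine gap in your step (iii), which you yourself identify as the crux. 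After restricting to $|v|\le N$, $|\eta|\le 2N$, $|\xi|\le 3N$ and $s-s_1\ge\la$, the integrand still carries the product $l_w(v,\eta)\,l_w(\eta,\xi)$, and by \eqref{Prol2} this kernel contains the factors $|v-\eta|^{\ga}$ and $|v-\eta|^{-(3-\ga)/2}$ (and likewise in $|\eta-\xi|$), which blow up on the diagonal; it is locally integrable but \emph{not} pointwise bounded on bounded velocity sets. Your proposal asserts "the boundedness of $l_w\,w_\be/\sqrt{\bar\mu_{\de,\rho}}$ on $\{|\eta'|\le N_\rho\}$" and proceeds to "turn the $\eta$-integration into a spatial integration of $|f|$"; but once you substitute $z=x-v(t-s)-\eta(s-s')$, the singular factor $l_w(v,\eta)$ becomes a singular function of $z$ entangled with the spatial integral of $f$, so the entropy functional of Lemma \ref{Taylor} (which controls only $dy\,d\xi$ integrals with no weight singular in the position variable) cannot be applied directly.

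The paper closes this gap by approximating $l_w$ on the bounded region by a smooth, compactly supported kernel $l_N$ with the two quantitative properties \eqref{lN1} and \eqref{lN2}: the $L^1_\eta$ error $\sup_{|v|\le 3N}\int|l_w-l_N|\,d\eta\le CN^{-7}$ (so the error terms are absorbed like the $1/N$ pieces) and the pointwise bound $|l_N|\le C(\ga)C_N$ with $C_N=N^{7(3-\ga)/(6+2\ga)+\be}$. Only after replacing both kernels by $l_N$ can one pull out the constant $C_N^2$, perform your change of variables, and invoke Lemma \ref{Taylor}; the factor $C_N^2$ then propagates into the constant $C_{\rho,N}$ of \eqref{CrhoN}, which is why that constant has the particular $N$-dependence it does. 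You need this approximation device (or an equivalent one, e.g.\ an $L^2_\eta$ duality argument exploiting the square-integrability of $l_w$) to make step (iii) rigorous. A second, minor point: you use $\la_\rho$ both for the lower-bound constant of $\nu_\de$ (which the paper writes as a multiple of $C_{1\rho}$) and, implicitly in your final constants, for the short-time cutoff \eqref{deflarho}; these are different objects and conflating them obscures how the powers $\la_\rho^{-3/2}$ and $\la_\rho^{-3}$ arise from the Jacobian $(s-s_1)^{-3}\le\la^{-3}$ combined with the Cauchy--Schwarz step in \eqref{UseEntropy}.
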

\begin{proof}
Now by Lemma \ref{Gaest} and \eqref{1stest}, it is straightforward to see that
\begin{align}\label{J2}
\dis J_2&=\int_0^t e^{-\nu_\de(v)(t-s)} w_\be(v)\left|\Ga_\de(f)(s,x-v(t-s),v)\right|ds\notag\\
&\leq  C\left(\int_0^t e^{-\nu_\de(v)(t-s)} \nu(v)ds\right)C_{5\rho}\sup_{0\leq t\leq T}\left(1+\|w_\be f(t)\|_{{L^{\infty}_{v,x}}}\right)\notag\\
&\qquad\times\left\{\|w_\be f(t)\|^{\frac{2p-1}{p}}_{{L^{\infty}_{v,x}}}\| f(t)\|_{L^{\infty}_{x}L^{1}_{v}}^{\frac{1}{p}}+\|w_\be f(t)\|^{\frac{10p-1}{5p}}_{{L^{\infty}_{v,x}}}\| f(t)\|_{L^{\infty}_{x}L^{1}_{v}}^{\frac{1}{5p}}\right\}\notag\\
&\leq  C\,\frac{C_{5\rho}}{C_{1\rho}}\sup_{0\leq t\leq T}\left(1+\|w_\be f(t)\|_{{L^{\infty}_{v,x}}}\right)\notag\\
&\qquad\times\left\{\|w_\be f(t)\|^{\frac{2p-1}{p}}_{{L^{\infty}_{v,x}}}\| f(t)\|_{L^{\infty}_{x}L^{1}_{v}}^{\frac{1}{p}}+\|w_\be f(t)\|^{\frac{10p-1}{5p}}_{{L^{\infty}_{v,x}}}\| f(t)\|_{L^{\infty}_{x}L^{1}_{v}}^{\frac{1}{5p}}\right\},
\end{align}
where in the last inequality above, we have used the fact that
\begin{align*}
\int_0^t e^{-\nu_\de(v)(t-s)} \nu(v)ds\leq \frac{C}{C_{1\rho}\nu(v)}\nu(v)\leq\frac{C}{C_{1\rho}}.
\end{align*}

Next we turn to $J_1$. Recall
\begin{align*}%\label{defJ1}
\dis J_1=C_{2\rho}\int_0^t e^{-\nu_\de(v)(t-s)}\int_{\R^3}l_w(v,\eta)\left|w_\be(\eta) f(s,x-v(t-s),\eta)\right|d\eta ds.
\end{align*}
We substitute \eqref{1stest} into $\left|w_\be(\eta) f(s,x-v(t-s),\eta)\right|$ in $J_1$ to yield
\begin{align}\label{1stJ1}
\dis J_1&\leq C_{2\rho}\int_0^t e^{-\nu_\de(v)(t-s)}\int_{\R^3}l_w(v,\eta)\left\|w_\be f_0 \right\|_{L^{\infty}_{v,x}}d\eta ds\notag\\
&\quad +Cm^{3+\ga} \frac{C^2_{2\rho}}{C_{1\rho}}\int_0^t e^{-\nu_\de(v)(t-s)}\int_{\R^3}\left|l_w(v,\eta)\right|e^{-\frac{|\eta|^2}{30}}\|w_\be f\|_{L^\infty_T L^{\infty}_{v,x}}d\eta ds\notag\\
&\quad +C^2_{2\rho}\int_0^t e^{-\nu_\de(v)(t-s)}\int_{\R^3}l_w(v,\eta)\notag\\
&\qquad\qquad\qquad\qquad\times\int_0^s e^{-\nu_\de(\eta)(s-s_1)}\int_{\R^3}\left|l_w(\eta,\xi)\right|\left|w_\be(\xi) f(s_1,x_1-\eta(s-s_1),\xi)\right|d\xi ds_1d\eta ds\notag\\
&\quad +C_{2\rho}\int_0^t e^{-\nu_\de(v)(t-s)}\int_{\R^3}l_w(v,\eta)\notag\\
&\qquad\qquad\qquad\qquad\times\int_0^s e^{-\nu_\de(\eta)(s-s_1)} w_\be(\eta)\left|\Ga_\de(f)(s_1,x_1-\eta(s-s_1),\eta)\right|ds_1d\eta ds\notag\\
&=J_{10}+J_{11}+J_{12}+J_{13},
\end{align}
where $x_1=x-v(t-s)$.
From \eqref{Prol1} in Lemma \ref{KmKc} we have
\begin{align}\label{J10}
\dis J_{10}&\leq C\,C_{2\rho} m^{\ga-1}\left\|w_\be f_0 \right\|_{L^{\infty}_{v,x}}\int_0^t e^{-\nu_\de(v)(t-s)} \frac{\nu(v)}{(1+|v|)^2}ds\notag\\
&\leq C\frac{C_{2\rho}}{C_{1\rho}} m^{\ga-1}\left\|w_\be f_0 \right\|_{L^{\infty}_{v,x}}\frac{1}{\nu(v)}\frac{\nu(v)}{(1+|v|)^2}\notag\\
&\leq C\frac{C_{2\rho}}{C_{1\rho}} m^{\ga-1}\left\|w_\be f_0 \right\|_{L^{\infty}_{v,x}}.
\end{align}
Since $e^{-\frac{|\eta|^2}{30}}\leq C\nu(\eta)$, it follows that
\begin{align}\label{J11}
\dis J_{11}&\leq Cm^{3+\ga} \frac{C^2_{2\rho}}{C_{1\rho}}\int_0^t e^{-\nu_\de(v)(t-s)}\int_{\R^3}l_w(v,\eta)\nu(\eta)\|w_\be f\|_{L^\infty_T L^{\infty}_{v,x}}d\eta ds\notag\\
&\leq Cm^{3+\ga} \frac{C^2_{2\rho}}{C^2_{1\rho}}\|w_\be f\|_{L^\infty_T L^{\infty}_{v,x}}.
\end{align}
Then we estimate $J_{13}$. Applying Lemma \ref{Gaest} again, one can get that
\begin{align}\label{J13}
\dis J_{13}&\leq C\,C_{2\rho}C_{5\rho}\int_0^t e^{-\nu_\de(v)(t-s)}\int_{\R^3}l_w(v,\eta)\int_0^s e^{-\nu_\de(\eta)(s-s_1)} \nu(\eta)ds_1d\eta ds \notag\\
&\qquad\times\sup_{0\leq t\leq T}\left(1+\|w_\be f(t)\|_{{L^{\infty}_{v,x}}}\right)\left\{\|w_\be f(t)\|^{\frac{2p-1}{p}}_{{L^{\infty}_{v,x}}}\| f(t)\|_{L^{\infty}_{x}L^{1}_{v}}^{\frac{1}{p}}+\|w_\be f(t)\|^{\frac{10p-1}{5p}}_{{L^{\infty}_{v,x}}}\| f(t)\|_{L^{\infty}_{x}L^{1}_{v}}^{\frac{1}{5p}}\right\}\notag\\
&\leq C\frac{C_{2\rho}C_{5\rho}}{C^2_{1\rho}}m^{\ga-1}\sup_{0\leq t\leq T}\left(1+\|w_\be f(t)\|_{{L^{\infty}_{v,x}}}\right)\notag\\
&\qquad\qquad\qquad\qquad\times\left\{\|w_\be f(t)\|^{\frac{2p-1}{p}}_{{L^{\infty}_{v,x}}}\| f(t)\|_{L^{\infty}_{x}L^{1}_{v}}^{\frac{1}{p}}+\|w_\be f(t)\|^{\frac{10p-1}{5p}}_{{L^{\infty}_{v,x}}}\| f(t)\|_{L^{\infty}_{x}L^{1}_{v}}^{\frac{1}{5p}}\right\}.
\end{align}
The estimate of $J_{12}$ is more delicate. We divide it into four cases. First in order to simplify the integral, recalling $x_1=x-v(t-s)$, we rewrite $J_{12}$ by Fubini's theorem that
\begin{align*}
\dis J_{12}&= C^2_{2\rho}\int_0^t e^{-\nu_\de(v)(t-s)}\int_{\R^3}\left|l_w(v,\eta)\right|\notag\\
&\qquad\qquad\qquad\qquad\times\int_0^s e^{-\nu_\de(\eta)(s-s_1)}\int_{\R^3}l_w(\eta,\xi)\left|w_\be(\xi) f(s_1,x_1-\eta(s-s_1),\xi)\right|d\xi ds_1d\eta ds\notag\\
&= C^2_{2\rho}\int_0^t \int_0^s\int_{\R^3}\int_{\R^3} e^{-\nu_\de(v)(t-s)} e^{-\nu_\de(\eta)(s-s_1)}\notag\\
&\qquad\qquad\qquad\qquad\qquad\times l_w(v,\eta)l_w(\eta,\xi)\left|w_\be(\xi) f(s_1,x_1-\eta(s-s_1),\xi)\right|d\xi d\eta ds_1 ds.
\end{align*}
Then we divide the above integral into four parts as follows.
\begin{align}\label{sepJ12}
\dis J_{12}&\leq C^2_{2\rho}\int_0^t \int_0^s\int_{\R^3}\int_{\R^3} e^{-\nu_\de(v)(t-s)} e^{-\nu_\de(\eta)(s-s_1)}\chi_{\{|v|\geq N \}}\notag\\
&\qquad\qquad\qquad\qquad\qquad\times l_w(v,\eta)l_w(\eta,\xi)\left|w_\be(\xi) f(s_1,x_1-\eta(s-s_1),\xi)\right|d\xi d\eta ds_1 ds\notag\\
&\qquad+C^2_{2\rho}\int_0^t \int_0^s\int_{\R^3}\int_{\R^3} e^{-\nu_\de(v)(t-s)} e^{-\nu_\de(\eta)(s-s_1)}\chi_{\{|v|\leq N,|\eta|\geq 2N \}}\chi_{\{|\eta|\leq 2N,|\xi|\geq 3N \}}\notag\\
&\qquad\qquad\qquad\qquad\qquad\times l_w(v,\eta)l_w(\eta,\xi) \left|w_\be(\xi) f(s_1,x_1-\eta(s-s_1),\xi)\right|d\xi d\eta ds_1 ds \notag\\
&\qquad+ C^2_{2\rho}\int_0^t \int_{s-\la}^s\int_{\R^3}\int_{\R^3} e^{-\nu_\de(v)(t-s)} e^{-\nu_\de(\eta)(s-s_1)}\notag\\
&\qquad\qquad\qquad\qquad\qquad\times l_w(v,\eta)l_w(\eta,\xi)\left|w_\be(\xi) f(s_1,x_1-\eta(s-s_1),\xi)\right|d\xi d\eta ds_1 ds\notag\\
&\qquad+ C^2_{2\rho}\int_0^t \int_0^{s-\la}\int_{\R^3}\int_{\R^3} e^{-\nu_\de(v)(t-s)} e^{-\nu_\de(\eta)(s-s_1)}\chi_{\{|v|\leq N,|\eta|\leq 2N, |\xi|\leq 3N \}}\notag\\
&\qquad\qquad\qquad\qquad\qquad\times l_w(v,\eta)l_w(\eta,\xi)\left|w_\be(\xi) f(s_1,x_1-\eta(s-s_1),\xi)\right|d\xi d\eta ds_1 ds\notag\\
&=J_{121}+J_{122}+J_{123}+J_{124}.
\end{align}
It is straightforward to see that
\begin{align*}%\label{1stJ121}
\dis J_{121}&\leq C^2_{2\rho}\int_0^t \int_0^s\int_{\R^3}\int_{\R^3} e^{-\nu_\de(v)(t-s)} e^{-\nu_\de(\eta)(s-s_1)}\chi_{\{|v|\geq N \}}\notag\\
&\qquad\qquad\qquad\qquad\qquad\times l_w(v,\eta)l_w(\eta,\xi)\|w_\be f\|_{L^\infty_T L^{\infty}_{v,x}}d\xi d\eta ds_1 ds.
\end{align*}
In the last integral above, the only term that contains time variables $s_1$ and $s$ is $e^{-\nu_\de(v)(t-s)} e^{-\nu_\de(v)(s-s_1)}$. Then we can integrate with respect to $s_1$ and $s$ first and use \eqref{Prol1} again to get
\begin{align}\label{2ndJ121}
\dis J_{121}&\leq C\frac{C^2_{2\rho}}{C^2_{1\rho}}\int_{\R^3}\int_{\R^3} \chi_{\{|v|\geq N \}}\frac{1}{\nu(v)\nu(\eta)}\notag\\
&\qquad\qquad\qquad\qquad\qquad\times l_w(v,\eta)l_w(\eta,\xi)\|w_\be f\|_{L^\infty_T L^{\infty}_{v,x}}d\xi d\eta.
\end{align}
Then on the right-hand side of \eqref{2ndJ121}, using the fact from \eqref{Prol2} that
\begin{align*}
&\int_{\R^3}\int_{\R^3}\frac{1}{\nu(v)\nu(\eta)}l_w(v,\eta)l_w(\eta,\xi)d\xi d\eta\notag\\
&\leq Cm^{\ga-1}\int_{\R^3}\frac{1}{\nu(v)}l_w(v,\eta)d\eta\notag\\
&\leq Cm^{2\ga-2}\frac{1}{(1+|v|)^2},
\end{align*}
we obtain that
\begin{align}\label{J121}
\dis J_{121}&\leq C\frac{C^2_{2\rho}}{C^2_{1\rho}}\int_{\R^3}\int_{\R^3} \chi_{\{|v|\geq N \}}\frac{1}{\nu(v)\nu(\eta)}l_w(v,\eta)l_w(\eta,\xi)\|w_\be f\|_{L^\infty_T L^{\infty}_{v,x}}d\xi d\eta\notag\\
&\leq Cm^{2\ga-2}\frac{C^2_{2\rho}}{C^2_{1\rho}}\frac{1}{(1+|v|)^2}\chi_{\{|v|\geq N \}}\|w_\be f\|_{L^\infty_T L^{\infty}_{v,x}}\notag\\
&\leq \frac{Cm^{2\ga-2}}{N}\frac{C^2_{2\rho}}{C^2_{1\rho}}\|w_\be f\|_{L^\infty_T L^{\infty}_{v,x}}.
\end{align}
Notice that if $|v|\leq N,|\eta|\geq 2N$ or $|\eta|\leq 2N,|\xi|\geq 3N$, we have either $|v-\eta|\geq N$ or $|\eta-\xi|\geq N$. Thus,  we have $e^{-\frac{|v-\eta|^2}{20}}e^{-\frac{|\eta-\xi|^2}{20}}\leq \frac{C}{N}$ for some constant $C$. Then similar arguments as \eqref{J121} show that
\begin{align}\label{J122}
\dis J_{122}&\leq C^2_{2\rho}\int_0^t \int_0^s\int_{\R^3}\int_{\R^3} e^{-\nu_\de(v)(t-s)} e^{-\nu_\de(\eta)(s-s_1)}\chi_{\{|v|\leq N,|\eta|\geq 2N \}}\chi_{\{|\eta|\leq 2N,|\xi|\geq 3N \}}\notag\\
&\qquad\qquad\qquad\qquad\qquad\times l_w(v,\eta)l_w(\eta,\xi)\|w_\be f\|_{L^\infty_T L^{\infty}_{v,x}}d\xi d\eta ds_1 ds\notag\\
&\leq C\frac{C^2_{2\rho}}{C^2_{1\rho}}\int_{\R^3}\int_{\R^3} \chi_{\{|v|\leq N,|\eta|\geq 2N \}}\chi_{\{|\eta|\leq 2N,|\xi|\geq 3N \}}\frac{1}{\nu(v)\nu(\eta)}\notag\\
&\qquad\qquad\times l_w(v,\eta)e^{\frac{|v-\eta|^2}{20}}e^{-\frac{|v-\eta|^2}{20}}l_w(\eta,\xi)e^{\frac{|\eta-\xi|^2}{20}}e^{-\frac{|\eta-\xi|^2}{20}}\|w_\be f\|_{L^\infty_T L^{\infty}_{v,x}}d\xi d\eta\notag\\
&\leq\frac{C}{N}\frac{C^2_{2\rho}}{C^2_{1\rho}}\int_{\R^3}\int_{\R^3}\frac{1}{\nu(v)\nu(\eta)}l_w(v,\eta)e^{\frac{|v-\eta|^2}{20}}l_w(\eta,\xi)e^{\frac{|\eta-\xi|^2}{20}}\|w_\be f\|_{L^\infty_T L^{\infty}_{v,x}}d\xi d\eta\notag\\
&\leq \frac{Cm^{2\ga-2}}{N}\frac{C^2_{2\rho}}{C^2_{1\rho}}\|w_\be f\|_{L^\infty_T L^{\infty}_{v,x}}.
\end{align}
$J_{123}$ can be similarly calculated as above, which yields
\begin{align}\label{J123}
\dis J_{123}&=C^2_{2\rho}\int_0^t \int_{s-\la}^s\int_{\R^3}\int_{\R^3} e^{-\nu_\de(v)(t-s)} e^{-\nu_\de(\eta)(s-s_1)}\notag\\
&\qquad\qquad\qquad\qquad\qquad\times l_w(v,\eta)l_w(\eta,\xi)\left|w_\be(\xi) f(s_1,x_1-\eta(s-s_1),\xi)\right|d\xi d\eta ds_1 ds\notag\\
&\leq C^2_{2\rho}\la\int_0^t \int_{\R^3}\int_{\R^3} e^{-\nu_\de(v)(t-s)}l_w(v,\eta)l_w(\eta,\xi)\|w_\be f\|_{L^\infty_T L^{\infty}_{v,x}}d\xi d\eta ds\notag\\
&\leq C\frac{C^2_{2\rho}}{C_{1\rho}}\la\int_{\R^3}\int_{\R^3} \frac{1}{\nu(v)}l_w(v,\eta)l_w(\eta,\xi)\|w_\be f\|_{L^\infty_T L^{\infty}_{v,x}}d\xi d\eta\notag\\
&\leq Cm^{2\ga-2}\frac{C^2_{2\rho}}{C_{1\rho}}\la\|w_\be f\|_{L^\infty_T L^{\infty}_{v,x}}.
\end{align}
Now we focus on $J_{124}$ where
\begin{align*}
\dis J_{124}&=C^2_{2\rho}\int_0^t \int_0^{s-\la}\int_{\R^3}\int_{\R^3} e^{-\nu_\de(v)(t-s)} e^{-\nu_\de(\eta)(s-s_1)}\chi_{\{|v|\leq N,|\eta|\leq 2N, |\xi|\leq 3N \}}\notag\\
&\qquad\qquad\qquad\qquad\qquad\times l_w(v,\eta)l_w(\eta,\xi)\left|w_\be(\xi) f(s_1,x_1-\eta(s-s_1),\xi)\right|d\xi d\eta ds_1 ds.
\end{align*}
Although all the velocity variables are bounded, $|v-\eta|$ or $|\eta-\xi|$ might be small, then it is difficult to have a good pointwise bound of the function $l_w$ due to the possible singularity from $\frac{1}{|v-\eta|}$ or $\frac{1}{|\eta-\xi|}$. However, if we take integral, $l_w$ is bounded in the following sense,
\begin{align*}
\sup_{|v|\leq 3N}\int_{|\eta|\leq 3N}l_w(v,\eta)d\eta &\leq C(\ga) m^{\ga-1}.
\end{align*}
Then for $0<\kappa\ll 1$ there exists a smooth function $l_\kappa=l_\kappa(v,\eta)$ with compact support such that
\begin{align*}
\sup_{|v|\leq 3N}\int_{|\eta|\leq 3N}|l_w(v,\eta)-l_\kappa(v,\eta)|d\eta &\leq C(\ga)\kappa^{3+\ga}.
\end{align*}
Moreover we have the pointwise bound of $l_\kappa$ which is 
\begin{align*}
\sup_{|v|\leq 3N,|\eta|\leq 3N}|l_\kappa(v,\eta)| &\leq C(\ga)N^\be\frac{1}{\kappa^{\frac{3-\ga}{2}}}.
\end{align*}
Such approximation can be directly obtained by removing the singularity using a smooth cut-off function to restrict $l_w$ in the region $\{|v|\leq 3N,|\eta|\leq 3N,|v-\eta|\geq \kappa\}$. Now we choose $\kappa=N^{-\frac{7}{3+\ga}}$ and denote $l_N=l_\kappa|_{\kappa=N^{-\frac{7}{3+\ga}}}$ to get that
\begin{align}\label{lN1}
\sup_{|v|\leq 3N}\int_{|\eta|\leq 3N}|l_w(v,\eta)-l_N(v,\eta)|d\eta &\leq \frac{C}{N^7},
\end{align}
and
\begin{align}\label{lN2}
\sup_{|v|\leq 3N,|\eta|\leq 3N}|l_N(v,\eta)| &\leq C(\ga)C_N,
\end{align}
where 
\begin{align}\label{DEFCN}
C_N=N^{\frac{7(3-\ga)}{6+2\ga}+\be}.
\end{align}
It is direct to see that $l_N$ also satisfies $\sup_{|v|\leq 3N}\int_{|\eta|\leq 3N}|l_N(v,\eta)|d\eta \leq C(\ga).$ Since we need to figure out how the constants depend on the parameter $\rho$, then we should calculate exactly how the pointwise and integral bounds of the approximation function $l_N$ depend on $N$ for later use. If $\rho$ is a given number instead of a parameter, we will only need \eqref{lN1} and \eqref{lN2} without the explicit formula for $C_N$.
Using the approximation function $l_N$, it follows that
\begin{align}\label{1stJ124}
\dis J_{124}&\leq C^2_{2\rho}\int_0^t \int_0^{s-\la}\int_{\R^3}\int_{\R^3} e^{-\nu_\de(v)(t-s)} e^{-\nu_\de(\eta)(s-s_1)}\chi_{\{|v|\leq N,|\eta|\leq 2N, |\xi|\leq 3N \}}\notag\\
&\qquad\qquad\qquad\qquad\times\left|l_w(v,\eta)\right|\left|l_w(\eta,\xi)-\l_N(\eta,\xi)\right|\left|w_\be(\xi) f(s_1,x_1-\eta(s-s_1),\xi)\right|d\xi d\eta ds_1 ds\notag\\
&\quad+ C^2_{2\rho}\int_0^t \int_0^{s-\la}\int_{\R^3}\int_{\R^3} e^{-\nu_\de(v)(t-s)} e^{-\nu_\de(\eta)(s-s_1)}\chi_{\{|v|\leq N,|\eta|\leq 2N, |\xi|\leq 3N \}}\notag\\
&\qquad\qquad\qquad\qquad\times\left|l_N(\eta,\xi)\right|\left|l_w(v,\eta)-\l_N(v,\eta)\right|\left|w_\be(\xi) f(s_1,x_1-\eta(s-s_1),\xi)\right|d\xi d\eta ds_1 ds\notag\\
&\quad+ C^2_{2\rho}\int_0^t \int_0^{s-\la}\int_{\R^3}\int_{\R^3} e^{-\nu_\de(v)(t-s)} e^{-\nu_\de(\eta)(s-s_1)}\chi_{\{|v|\leq N,|\eta|\leq 2N, |\xi|\leq 3N \}}\notag\\
&\qquad\qquad\qquad\qquad\times\left|l_N(v,\eta)\l_N(v,\eta)\right|\left|w_\be(\xi) f(s_1,x_1-\eta(s-s_1),\xi)\right|d\xi d\eta ds_1 ds.
\end{align}
Applying the pointwise bound $\left|w_\be(\xi) f(s_1,x_1-\eta(s-s_1),\xi)\right|\leq \|w_\be f\|_{L^\infty_T L^{\infty}_{v,x}}$, then integrating with respect to $s_1$, $s$, $\xi$ and $\eta$ respectively, one gets that
\begin{align}\label{2ndJ124}
\dis J_{124}&\leq C\frac{C^2_{2\rho}}{C^2_{1\rho}N^7}\|w_\be f\|_{L^\infty_T L^{\infty}_{v,x}}\int_{\R^3}\frac{\left|l_w(v,\eta)\right|}{\nu(v)\nu(\eta)}\chi_{\{|v|\leq N,|\eta|\leq 2N\}}d\eta\notag\\
&\quad+ C\frac{C^2_{2\rho}}{C^2_{1\rho}}\|w_\be f\|_{L^\infty_T L^{\infty}_{v,x}}\int_{\R^3}\frac{\left|l_w(v,\eta)-\l_N(v,\eta)\right|}{\nu(v)\nu(\eta)}\chi_{\{|v|\leq N,|\eta|\leq 2N\}}d\eta\notag\\
&\quad+ C\,C^2_{2\rho}C_N^2\int_0^t \int_0^{s-\la}\int_{\R^3}\int_{\R^3} e^{-\nu_\de(v)(t-s)} e^{-\nu_\de(\eta)(s-s_1)}\chi_{\{|v|\leq N,|\eta|\leq 2N, |\xi|\leq 3N \}}\notag\\
&\qquad\qquad\qquad\qquad\qquad\times\left|w_\be(\xi) f(s_1,x_1-\eta(s-s_1),\xi)\right|d\xi d\eta ds_1 ds.
\end{align}
Using the inequalities that
\begin{align*}
\int_{\R^3}\frac{\left|l_w(v,\eta)\right|}{\nu(\eta)\nu(\xi)}\chi_{\{|v|\leq N,|\eta|\leq 2N\}}d\eta\leq CN^6\int_{\R^3}\left|l_w(v,\eta)\right|d\eta\leq CN^6
\end{align*}
and
\begin{align*}
\int_{\R^3}\frac{\left|l_w(v,\eta)-\l_N(v,\eta)\right|}{\nu(\eta)\nu(\xi)}\chi_{\{|v|\leq N,|\eta|\leq 2N\}}d\eta\leq CN^6\int_{\R^3}\left|l_w(v,\eta)-\l_N(v,\eta)\right|d\eta\leq \frac{C}{N},
\end{align*}
it follows from \eqref{2ndJ124} that
\begin{align}\label{3rdJ124}
\dis J_{124}&\leq C\frac{C^2_{2\rho}}{C^2_{1\rho}N}\|w_\be f\|_{L^\infty_T L^{\infty}_{v,x}}\notag\\
&\quad+ C\,C^2_{2\rho}C_N^2\int_0^t \int_0^{s-\la}\int_{\R^3}\int_{\R^3} e^{-\nu_\de(v)(t-s)} e^{-\nu_\de(\eta)(s-s_1)}\chi_{\{|\eta|\leq 2N, |\xi|\leq 3N \}}\notag\\
&\qquad\qquad\qquad\qquad\qquad\times\left|w_\be(\xi) f(s_1,x_1-\eta(s-s_1),\xi)\right|d\xi d\eta ds_1 ds.
\end{align}
We need to treat the second term on the right-hand side of \eqref{3rdJ124} carefully. Recall from Lemma \ref{KdeK0} that
$$
\nu_\de(v)\geq C\, C_{1\rho}(1+|v|)^\ga.
$$
By $|v|\leq N$, $|\eta|\leq 2N$ we have
\begin{align}\label{nuvbound}
\nu_\de(v)\geq C\, C_{1\rho}N^\ga.
\end{align}
Then we can bound  the second term on the right-hand side of \eqref{3rdJ124} by
\begin{align}\label{entroterm}
&C\,C^2_{2\rho}C_N^2\int_0^t \int_0^{s-\la}\int_{\R^3}\int_{\R^3} e^{-\nu_\de(v)(t-s)} e^{-\nu_\de(\eta)(s-s_1)}\chi_{\{|\eta|\leq 2N, |\xi|\leq 3N \}}\notag\\
&\qquad\qquad\qquad\qquad\qquad\times\left|w_\be(\xi) f(s_1,x_1-\eta(s-s_1),\xi)\right|d\xi d\eta ds_1 ds\notag\\
&\leq C\,C^2_{2\rho}C_N^2\int_0^t \int_0^{s-\la}\int_{\R^3}\int_{\R^3} e^{-C_{1\rho}N^\ga(t-s)} e^{-C_{1\rho}N^\ga(s-s_1)}\chi_{\{|\eta|\leq 2N, |\xi|\leq 3N \}}\notag\\
&\qquad\qquad\qquad\qquad\qquad\times\left|w_\be(\xi) f(s_1,x_1-\eta(s-s_1),\xi)\right|d\xi d\eta ds_1 ds.
\end{align}
By observing that the only element in the last integral above that contains velocity variables $\eta$, $\xi$ is  $w_\be(\xi) f(s_1,x_1-\eta(s-s_1),\xi)$, we can consider $\iint\chi_{\{|\eta|\leq 2N, |\xi|\leq 3N \}}\left|w_\be(\xi) f(s_1,x_1-\eta(s-s_1),\xi)\right|d\xi d\eta$ first. Applying change of variable $y=x_1-\eta(s-s_1)$, we obtain
\begin{align}\label{UseEntropy}
&\iint_{|\eta|\leq2N, |\xi|\leq3N}\left|(w_\be f)(s_1,x_1-\eta(s-s_1),\xi)\right| d\eta d\xi\notag\\
&\leq CN^\be\iint_{|\eta|\leq2N, |\xi|\leq3N}\left(\frac{\left|F-\mu \right|}{\sqrt{\mu}}\right)(s_1,x_1-\eta(s-s_1),\xi)\chi_{\{|F(s_1,x_1-\eta(s-s_1),\xi)-\mu(\xi)|\leq \mu(\xi)\}} d\eta d\xi\notag\\
&\quad+CN^\be\iint_{|\eta|\leq2N, |\xi|\leq3N}\left|(F-\mu) (s_1,x_1-\eta(s-s_1),\xi)\right|\chi_{\{|F(s_1,x_1-\eta(s-s_1),\xi)-\mu(\xi)|\geq \mu(\xi)\}} d\eta d\xi\notag\\
&\leq CN^\be\frac{1+(s-s_1)^\frac{3}{2}}{(s-s_1)^\frac{3}{2}}\left\{\int_{\Omega}\int_{ |\xi|\leq3N}\left(\frac{\left|F-\mu \right|^2}{\mu}\right)(s_1,y,\xi)\chi_{\{|F(s_1,y,\xi)-\mu(\xi)|\leq \mu(\xi)\}} dy d\xi\right\}^\frac{1}{2}\notag\\
&\quad+ CN^\be\frac{1+(s-s_1)^3}{(s-s_1)^3}\int_{\Omega}\int_{ |\xi|\leq3N}\left|(F-\mu) (s_1,y,\xi)\right|\chi_{\{|F(s_1,y,\xi)-\mu(\xi)|\geq \mu(\xi)\}} dyd\xi. 
\end{align}
 Substituting \eqref{UseEntropy} into \eqref{entroterm} and using Lemma \ref{Taylor}, we obtain
\begin{align}\label{entrocontrol}
&C\,C^2_{2\rho}C_N^2\int_0^t \int_0^{s-\la}\int_{\R^3}\int_{\R^3} e^{-\nu_\de(v)(t-s)} e^{-\nu_\de(\eta)(s-s_1)}\chi_{\{|\eta|\leq 2N, |\xi|\leq 3N \}}\notag\\
&\qquad\qquad\qquad\qquad\qquad\times\left|w_\be(\xi) f(s_1,x_1-\eta(s-s_1),\xi)\right|d\xi d\eta ds_1 ds\notag\\
&\leq C\frac{C^2_{2\rho}}{C_{1\rho}N^\ga}C_N^2N^\be\left(\frac{1}{C_{1\rho}N^\ga}+\frac{1}{\left(C_{1\rho}N^\ga\right)^{5/2}}+\frac{1}{\left(C_{1\rho}N^\ga\right)^{4}} \right) \left( \la^{-\frac{3}{2}}\sqrt{\CE(F_0)}+ \la^{-3}\CE(F_0)\right)\notag\\
&=C\, C_{\rho,N}\left( \la^{-\frac{3}{2}}\sqrt{\CE_{\de,\rho}(F_0)}+ \la^{-3}\CE_{\de,\rho}(F_0)\right),
\end{align}
where 
\begin{align}\label{CrhoN}
C_{\rho,N}=\frac{C^2_{2\rho}}{C_{1\rho}N^\ga}C_N^2N^\be\left(\frac{1}{C_{1\rho}N^\ga}+\frac{1}{\left(C_{1\rho}N^\ga\right)^{5/2}}+\frac{1}{\left(C_{1\rho}N^\ga\right)^{4}} \right)
\end{align}
with $C_N$ defined in \eqref{DEFCN}.
It follows from \eqref{3rdJ124} and \eqref{entrocontrol} that
\begin{align}\label{J124}
\dis J_{124}&\leq C\frac{C^2_{2\rho}}{C^2_{1\rho}N}\|w_\be f\|_{L^\infty_T L^{\infty}_{v,x}}+C\, C_{\rho,N}\left( \la^{-\frac{3}{2}}\sqrt{\CE_{\de,\rho}(F_0)}+ \la^{-3}\CE_{\de,\rho}(F_0)\right).
\end{align}
Combining \eqref{sepJ12}, \eqref{J121}, \eqref{J122}, \eqref{J123} and \eqref{J124}, we obtain
\begin{align}\label{J12}
\dis J_{12}&\leq \frac{Cm^{2\ga-2}}{N}\frac{C^2_{2\rho}}{C^2_{1\rho}}\|w_\be f\|_{L^\infty_T L^{\infty}_{v,x}}+Cm^{2\ga-2}\frac{C^2_{2\rho}}{C_{1\rho}}\la\|w_\be f\|_{L^\infty_T L^{\infty}_{v,x}}\notag\\
&\qquad\qquad\qquad+ C\frac{C^2_{2\rho}}{C^2_{1\rho}N}\|w_\be f\|_{L^\infty_T L^{\infty}_{v,x}}+C\, C_{\rho,N}\left( \la^{-\frac{3}{2}}\sqrt{\CE_{\de,\rho}(F_0)}+ \la^{-3}\CE_{\de,\rho}(F_0)\right).
\end{align}
Then by \eqref{1stJ1}, \eqref{J10}, \eqref{J11}, \eqref{J12} and \eqref{J13}, it holds that
\begin{align}\label{J1}
\dis J_1&\leq C\frac{C_{2\rho}}{C_{1\rho}} m^{\ga-1}\left\|w_\be f_0 \right\|_{L^{\infty}_{v,x}}+Cm^{3+\ga} \frac{C^2_{2\rho}}{C^2_{1\rho}}\|w_\be f\|_{L^\infty_T L^{\infty}_{v,x}}+\frac{Cm^{2\ga-2}}{N}\frac{C^2_{2\rho}}{C^2_{1\rho}}\|w_\be f\|_{L^\infty_T L^{\infty}_{v,x}}\notag\\
&+Cm^{2\ga-2}\frac{C^2_{2\rho}}{C_{1\rho}}\la\|w_\be f\|_{L^\infty_T L^{\infty}_{v,x}}+ C\frac{C^2_{2\rho}}{C^2_{1\rho}N}\|w_\be f\|_{L^\infty_T L^{\infty}_{v,x}}+C\, C_{\rho,N}\left( \la^{-\frac{3}{2}}\sqrt{\CE_{\de,\rho}(F_0)}+ \la^{-3}\CE_{\de,\rho}(F_0)\right)\notag\\
&+C\frac{C_{2\rho}C_{5\rho}}{C^2_{1\rho}}m^{\ga-1}\sup_{0\leq t\leq T}\left(1+\|w_\be f(t)\|_{{L^{\infty}_{v,x}}}\right)\notag\\
&\qquad\qquad\qquad\qquad\times\left\{\|w_\be f(t)\|^{\frac{2p-1}{p}}_{{L^{\infty}_{v,x}}}\| f(t)\|_{L^{\infty}_{x}L^{1}_{v}}^{\frac{1}{p}}+\|w_\be f(t)\|^{\frac{10p-1}{5p}}_{{L^{\infty}_{v,x}}}\| f(t)\|_{L^{\infty}_{x}L^{1}_{v}}^{\frac{1}{5p}}\right\}.
\end{align}
Finally, we obtain the $L^\infty$ estimate by \eqref{1stest}, \eqref{J2} and \eqref{J1} that
\begin{align*}%\label{1Linftyest}
&\dis \left|w_\be(v) f(t,x,v)\right|\leq\left\|w_\be f_0 \right\|_{L^{\infty}_{v,x}}+Cm^{3+\ga} \frac{C_{2\rho}}{C_{1\rho}}\|w_\be f\|_{L^\infty_T L^{\infty}_{v,x}}+C\, C_{\rho,N}\left( \la^{-\frac{3}{2}}\sqrt{\CE_{\de,\rho}(F_0)}+ \la^{-3}\CE_{\de,\rho}(F_0)\right)\notag \\
&\quad+C\frac{C_{2\rho}}{C_{1\rho}} m^{\ga-1}\left\|w_\be f_0 \right\|_{L^{\infty}_{v,x}}+Cm^{3+\ga} \frac{C^2_{2\rho}}{C^2_{1\rho}}\|w_\be f\|_{L^\infty_T L^{\infty}_{v,x}}+\frac{Cm^{2\ga-2}}{N}\frac{C^2_{2\rho}}{C^2_{1\rho}}\|w_\be f\|_{L^\infty_T L^{\infty}_{v,x}}\notag\\
&\quad+Cm^{2\ga-2}\frac{C^2_{2\rho}}{C_{1\rho}}\la\|w_\be f\|_{L^\infty_T L^{\infty}_{v,x}}+ C\frac{C^2_{2\rho}}{C^2_{1\rho}N}\|w_\be f\|_{L^\infty_T L^{\infty}_{v,x}}\notag\\
&\quad+ C\frac{C_{5\rho}}{C_{1\rho}}\left(1+ \frac{C_{2\rho}}{C_{1\rho}}m^{\ga-1} \right)\sup_{0\leq t\leq T}\left(1+\|w_\be f(t)\|_{{L^{\infty}_{v,x}}}\right)\notag\\
&\qquad\qquad\qquad\qquad\times\left\{\|w_\be f(t)\|^{\frac{2p-1}{p}}_{{L^{\infty}_{v,x}}}\| f(t)\|_{L^{\infty}_{x}L^{1}_{v}}^{\frac{1}{p}}+\|w_\be f(t)\|^{\frac{10p-1}{5p}}_{{L^{\infty}_{v,x}}}\| f(t)\|_{L^{\infty}_{x}L^{1}_{v}}^{\frac{1}{5p}}\right\}.
\end{align*}
We first choose some small $\ep_1=\ep_1(\be,\ga)$, let
\begin{align*}%\label{defmrho}
\dis m_\rho=\min\left\{   \left(\frac{C_{1\rho}}{C_{2\rho}}\right)^\frac{1}{3+\ga} ,\left(\frac{C^2_{1\rho}}{C^2_{2\rho}}\right)^\frac{1}{3+\ga}       \right\},
\end{align*}
and
$m= \ep_1 m_\rho$ to get
\begin{align}\label{2Linftyest}
&\dis \|w_\be f\|_{L^\infty_T L^{\infty}_{v,x}}\leq\left\|w_\be f_0 \right\|_{L^{\infty}_{v,x}}+C\frac{C_{2\rho}}{C_{1\rho}} m_\rho^{\ga-1}\left\|w_\be f_0 \right\|_{L^{\infty}_{v,x}}+C\, C_{\rho,N}\left( \la^{-\frac{3}{2}}\sqrt{\CE_{\de,\rho}(F_0)}+ \la^{-3}\CE_{\de,\rho}(F_0)\right)\notag\\
&\quad+Cm_\rho^{2\ga-2}\frac{C^2_{2\rho}}{C_{1\rho}}\la\|w_\be f\|_{L^\infty_T L^{\infty}_{v,x}}+ C\frac{C^2_{2\rho}}{C^2_{1\rho}N}\|w_\be f\|_{L^\infty_T L^{\infty}_{v,x}}+\frac{Cm_\rho^{2\ga-2}}{N}\frac{C^2_{2\rho}}{C^2_{1\rho}}\|w_\be f\|_{L^\infty_T L^{\infty}_{v,x}}\notag\\
&\quad+ C\frac{C_{5\rho}}{C_{1\rho}}\left(1+ \frac{C_{2\rho}}{C_{1\rho}}m_\rho^{\ga-1} \right)\sup_{0\leq t\leq T}\left(1+\|w_\be f(t)\|_{{L^{\infty}_{v,x}}}\right)\notag\\
&\qquad\qquad\qquad\qquad\times\left\{\|w_\be f(t)\|^{\frac{2p-1}{p}}_{{L^{\infty}_{v,x}}}\| f(t)\|_{L^{\infty}_{x}L^{1}_{v}}^{\frac{1}{p}}+\|w_\be f(t)\|^{\frac{10p-1}{5p}}_{{L^{\infty}_{v,x}}}\| f(t)\|_{L^{\infty}_{x}L^{1}_{v}}^{\frac{1}{5p}}\right\}.
\end{align}
Similarly we choose $\ep_2=\ep_2(\be,\ga)$ such that if we let 
\begin{align}\label{defNrho}
\dis N=N_\rho:=\frac{1}{\eps_2}\max\left\{ \frac{C^2_{2\rho}}{C^2_{1\rho}} ,\frac{m_\rho^{2\ga-2}C^2_{2\rho}}{C^2_{1\rho}}  \right\},
\end{align}
and
\begin{align}\label{deflarho}
\dis \la=\la_\rho:= \eps_2\frac{C_{1\rho}}{m_\rho^{2\ga-2}C^2_{2\rho}}  ,
\end{align}
 it follows from \eqref{2Linftyest} that
\begin{align*}%\label{3Linftyest}
&\dis \|w_\be f\|_{L^\infty_T L^{\infty}_{v,x}}\leq\left\|w_\be f_0 \right\|_{L^{\infty}_{v,x}}+C\frac{C_{2\rho}}{C_{1\rho}} m_\rho^{\ga-1}\left\|w_\be f_0 \right\|_{L^{\infty}_{v,x}}+C\, C_{\rho,N_\rho}\left( \la_\rho^{-\frac{3}{2}}\sqrt{\CE_{\de,\rho}(F_0)}+ \la_\rho^{-3}\CE_{\de,\rho}(F_0)\right)\notag\\
&\quad+ C\frac{C_{5\rho}}{C_{1\rho}}\left(1+ \frac{C_{2\rho}}{C_{1\rho}}m_\rho^{\ga-1} \right)\sup_{0\leq t\leq T}\left(1+\|w_\be f(t)\|_{{L^{\infty}_{v,x}}}\right)\notag\\
&\qquad\qquad\qquad\qquad\times\left\{\|w_\be f(t)\|^{\frac{2p-1}{p}}_{{L^{\infty}_{v,x}}}\| f(t)\|_{L^{\infty}_{x}L^{1}_{v}}^{\frac{1}{p}}+\|w_\be f(t)\|^{\frac{10p-1}{5p}}_{{L^{\infty}_{v,x}}}\| f(t)\|_{L^{\infty}_{x}L^{1}_{v}}^{\frac{1}{5p}}\right\}.
\end{align*}
Notice that if we choose $\be>3$, we have $\| f(t)\|_{L^{\infty}_{x}L^{1}_{v}}\leq C\|w_\be f(t)\|_{{L^{\infty}_{v,x}}}$. Then by Theorem \ref{local} we have
\begin{align*}
\dis \|w_\be f\|_{L^\infty_T L^{\infty}_{v,x}}\leq&\left\|w_\be f_0 \right\|_{L^{\infty}_{v,x}}+C\frac{C_{2\rho}}{C_{1\rho}} m_\rho^{\ga-1}\left\|w_\be f_0 \right\|_{L^{\infty}_{v,x}}+C\, C_{\rho,N_\rho}\left( \la_\rho^{-\frac{3}{2}}\sqrt{\CE(F_0)}+ \la_\rho^{-3}\CE(F_0)\right)\notag\\
&\quad+ C\frac{C_{5\rho}}{C_{1\rho}}\left(1+ \frac{C_{2\rho}}{C_{1\rho}}m_\rho^{\ga-1} \right)\left(1+\|w_\be f_0\|_{{L^{\infty}_{v,x}}}\right)\|w_\be f_0\|^2_{{L^{\infty}_{v,x}}}\notag\\
&\quad+ C\frac{C_{5\rho}}{C_{1\rho}}\left(1+ \frac{C_{2\rho}}{C_{1\rho}}m_\rho^{\ga-1} \right)\sup_{T_1\leq t\leq T}\left(1+\|w_\be f(t)\|_{{L^{\infty}_{v,x}}}\right)\notag\\
&\qquad\qquad\qquad\qquad\times\left\{\|w_\be f(t)\|^{\frac{2p-1}{p}}_{{L^{\infty}_{v,x}}}\| f(t)\|_{L^{\infty}_{x}L^{1}_{v}}^{\frac{1}{p}}+\|w_\be f(t)\|^{\frac{10p-1}{5p}}_{{L^{\infty}_{v,x}}}\| f(t)\|_{L^{\infty}_{x}L^{1}_{v}}^{\frac{1}{5p}}\right\}.\end{align*}
We simplify the above inequality by writing
\begin{align}\label{Linftyest}
 \|w_\be f\|_{L^\infty_T L^{\infty}_{v,x}}\leq& C\,C_{6\rho}\left(\left\|w_\be f_0 \right\|_{L^{\infty}_{v,x}}+\left\|w_\be f_0 \right\|_{L^{\infty}_{v,x}}^2+\left\|w_\be f_0 \right\|_{L^{\infty}_{v,x}}^3\right)\notag\\
&\quad+ C\frac{C_{5\rho}}{C_{1\rho}}\left(1+ \frac{C_{2\rho}}{C_{1\rho}}m_\rho^{\ga-1} \right)\sup_{T_1\leq t\leq T}\left(1+\|w_\be f(t)\|_{{L^{\infty}_{v,x}}}\right)\notag\\
&\qquad\qquad\qquad\qquad\times\left\{\|w_\be f(t)\|^{\frac{2p-1}{p}}_{{L^{\infty}_{v,x}}}\| f(t)\|_{L^{\infty}_{x}L^{1}_{v}}^{\frac{1}{p}}+\|w_\be f(t)\|^{\frac{10p-1}{5p}}_{{L^{\infty}_{v,x}}}\| f(t)\|_{L^{\infty}_{x}L^{1}_{v}}^{\frac{1}{5p}}\right\}\notag\\
&\quad+C\, C_{\rho,N_\rho}\left( \la_\rho^{-\frac{3}{2}}\sqrt{\CE_{\de,\rho}(F_0)}+ \la_\rho^{-3}\CE_{\de,\rho}(F_0)\right),
\end{align}
where
\begin{align}\label{C6rho}
\dis C_{6\rho}=1+\frac{C_{5\rho}}{C_{1\rho}}\left(1+ \frac{C_{2\rho}}{C_{1\rho}}m_\rho^{\ga-1} \right)+\frac{C_{2\rho}}{C_{1\rho}}m_\rho^{\ga-1}.
\end{align}
Then \eqref{lemmaglobal} follows from \eqref{Linftyest}.
\end{proof}

We can see from \eqref{lemmaglobal} that if $\left\|w_\be f_0 \right\|_{L^{\infty}_{v,x}}$ and $\CE_{\de,\rho}(F_0)$ are uniformly small in $\de$, then we can close the a priori estimate to obtain a global solution. But we want to remove the smallness assumption on $\left\|w_\be f_0 \right\|_{L^{\infty}_{v,x}}$ so that the initial datum can have arbitrary large $L^\infty$ norm. In the meantime the smallness $\left\|w_\be f_0 \right\|_{L^{\infty}_{v,x}}$ is replaced by the smallness $\left\| f_0 \right\|_{L^{1}_{x}L^\infty_v}$. In this case, $F_0$ can reach the upper bound $\frac{1}{\de}$ and the lower bound $0$. In the view of \eqref{lemmaglobal}, this requires us to prove that the smallness of $\sup_{0\leq\de\leq1}\left\{ \CE_{\de,\rho}(F_0)+\|f_0\|_{L^1_xL^\infty_v}\right\}$ implies the smallness of $\| f\|_{{L^\infty_{T_1,T}}L^{\infty}_{x}L^{1}_{v}}$.

\begin{lemma}\label{L1est}
Let $\ga$, $\be$ and $p$ satisfy the assumption in Theorem \ref{global} and $T_1$ be the constant given in Theorem \ref{local}. Then for any $T>T_1$ and $(t,x)\in[T_1,T]\times \Omega$, it holds that
\begin{align}\label{L1Est}
\int_{\R^3}\left|f(t,x,v)\right|dv\leq &\int_{\R^3}e^{-\nu_\de(v)t}\left|f_0(x-vt,v)\right|dv+C\left(m^{3+\ga}\frac{C_{2\rho}}{C_{1\rho}}+\frac{1}{N}\frac{C_{2\rho}}{C_{1\rho}}+C_{2\rho}\la\right)\|w_\be f\|_{L^\infty_TL^{\infty}_{v,x}}\notag\\
&+C\left(C_{5\rho}\la+\frac{1}{N}\frac{C_{5\rho}}{C_{1\rho}}\right)\left(1+\|w_\be f\|_{L^\infty_TL^{\infty}_{v,x}}\right)\|w_\be f\|_{L^\infty_TL^{\infty}_{v,x}}^2\notag\\
&+C\|w_\be f\|^\frac{p-1}{p}_{L^\infty_TL^{\infty}_{v,x}} C'_{\rho,N}\left( \la^{-\frac{3}{2}}\sqrt{\CE_{\de,\rho}(F_0)}+ \la^{-3}\CE_{\de,\rho}(F_0)\right)^\frac{1}{p}\notag\\
&+CN^3\|w_\be f\|^\frac{5p-1}{5p}_{L^\infty_TL^{\infty}_{v,x}}  C'_{\rho,N}\left( \la^{-\frac{3}{2}}\sqrt{\CE_{\de,\rho}(F_0)}+ \la^{-3}\CE_{\de,\rho}(F_0)\right)^\frac{1}{5p}\notag\\
&+C\, \bar{C}_{\rho,N}\left( \la^{-\frac{3}{2}}\sqrt{\CE_{\de,\rho}(F_0)}+ \la^{-3}\CE_{\de,\rho}(F_0)\right),
\end{align}
where $\bar{C}_{\rho,N}$, $C'_{\rho,N}$ are given in \eqref{CbarrhoN} and \eqref{CprirhoN} respectively, $m,N,\la>0$ are constants which will be determined later.
\end{lemma}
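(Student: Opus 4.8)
\emph{Proof strategy.} The plan is to integrate the mild form \eqref{mildQBE}, with the splitting $K_\de=K^m_\de+K^c_\de$, against $dv$, obtaining
$\int_{\R^3}|f(t,x,v)|\,dv\le A_0+A_1+A_2+A_3$,
where $A_0=\int_{\R^3}e^{-\nu_\de(v)t}|f_0(x-vt,v)|\,dv$ is the first term on the right of \eqref{L1Est} and is kept as is, $A_1$ carries $K^m_\de$, $A_2$ carries $K^c_\de$ (which I will iterate once using \eqref{mildQBE} again), and $A_3$ carries $\Ga_\de$. The whole argument parallels the pointwise estimate of Lemma \ref{globalest}, except that one integrates in velocity and, whenever an $L^1_v$--norm of $f$ appears that cannot simply be absorbed, one passes to the spatial variable by a change of variables and invokes the entropy control of Lemma \ref{Taylor} in place of that norm. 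The term $A_1$ is immediate: by the bound $|K^m_\de f(v)|\le Cm^{3+\ga}C_{2\rho}e^{-|v|^2/20}\|f\|_{L^\infty_v}$ of Lemma \ref{KdeK0} together with $\int_0^t e^{-\nu_\de(v)(t-s)}\,ds\le 1/\nu_\de(v)\le CC_{1\rho}^{-1}(1+|v|)^{-\ga}$ and the integrability of $e^{-|v|^2/20}(1+|v|)^{-\ga}$, one gets $A_1\le Cm^{3+\ga}\tfrac{C_{2\rho}}{C_{1\rho}}\|w_\be f\|_{L^\infty_TL^\infty_{v,x}}$.

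For $A_2$, after writing $|K^c_\de f(v)|\le C_{2\rho}\int l(v,\eta)|f(\eta)|\,d\eta$ (Lemma \ref{KdeK0}) and substituting \eqref{mildQBE} for $f$ inside, one lands on four sub-terms. The $K^m_\de$--inside piece is again of the type $Cm^{3+\ga}(\cdot)\|w_\be f\|_{L^\infty_TL^\infty_{v,x}}$. The $K^c_\de$--inside piece is exactly the double-iterated structure $J_{12}$ of Lemma \ref{globalest}, so I would decompose the $(v,\eta,\xi,s,s_1)$--integration as in \eqref{sepJ12}: the large-velocity regions ($|v|\ge N$ or $|\eta|,|\xi|$ large) produce the factor $N^{-1}$ via the Gaussian tails of $l$, using \eqref{Prol2} and \eqref{Prol1}, hence the $\tfrac1N\tfrac{C_{2\rho}}{C_{1\rho}}\|w_\be f\|_{L^\infty_TL^\infty_{v,x}}$ contribution; the short time window $s_1\in(s-\la,s)$ produces the $\la$--factor exactly as in \eqref{J123}; and on the bounded-velocity, long-time region one replaces $l$ by its compactly supported smooth truncation $l_N$ as in \eqref{1stJ124}--\eqref{lN2} and \eqref{DEFCN}, then changes variables $\eta\mapsto y$ (as in \eqref{UseEntropy}) to bring in a spatial integral over $\Omega$, the surviving time integral $\int_0^{s-\la}e^{-\nu_\de(\eta)(s-s_1)}(1+(s-s_1)^{-3/2})\,ds_1\lesssim(C_{1\rho}N^\ga)^{-1}\la^{-3/2}$ (and its $(s-s_1)^{-3}$ analogue giving $\la^{-3}$) combining with Lemma \ref{Taylor} to yield, precisely as in \eqref{entrocontrol}--\eqref{J124}, the term $C\bar C_{\rho,N}\big(\la^{-3/2}\sqrt{\CE_{\de,\rho}(F_0)}+\la^{-3}\CE_{\de,\rho}(F_0)\big)$. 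Finally the $\Ga_\de$--inside piece of $A_2$ is handled by Lemma \ref{Gaest}: the resulting $L^1_v$--factor of $f$ is bounded, after the same velocity truncation and change of variables plus Lemma \ref{Taylor}, by the entropy up to $N,\rho$--dependent constants, and Hölder's inequality converts it into the powers $\big(\la^{-3/2}\sqrt{\CE_{\de,\rho}(F_0)}+\la^{-3}\CE_{\de,\rho}(F_0)\big)^{1/p}$ and $\big(\cdots\big)^{1/5p}$ appearing in the fourth and fifth lines of \eqref{L1Est}, while the crude bound $\int|f|\,dv\le C\|w_\be f\|_{L^\infty_v}$ on the remaining portion gives the cubic terms with coefficients $C_{5\rho}\la$ and $\tfrac1N\tfrac{C_{5\rho}}{C_{1\rho}}$ in the third line.

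For $A_3$ I would apply Lemma \ref{Gaest} pointwise in $(s,x-v(t-s),v)$, so that $A_3$ is dominated by $CC_{5\rho}(1+\|w_\be f\|_{L^\infty_{v,x}})$ times an integral of $e^{-\nu_\de(v)(t-s)}\nu(v)w_\be(v)^{-1}$ against $\|w_\be f\|^{(2p-1)/p}_{L^\infty_{v,x}}\|f(s,x-v(t-s),\cdot)\|^{1/p}_{L^1_v}$ plus the analogous $\tfrac1{5p}$--term. Splitting as before — $|v|\ge N$, where $\int_{|v|\ge N}\nu(v)w_\be(v)^{-1}\nu_\de(v)^{-1}dv\lesssim C_{1\rho}^{-1}N^{3-\be}\le C_{1\rho}^{-1}N^{-1}$ since $\be>6$; the window $s\in(t-\la,t)$, where $\int_{t-\la}^t e^{-\nu_\de(v)(t-s)}ds\le\la$ and $\int_{\R^3}\nu(v)w_\be(v)^{-1}dv<\infty$; and the bounded-velocity, long-time region, where one again changes variables $v\mapsto y=x-v(t-s)$ (the Jacobian being $(t-s)^{-3}$ and $y$ confined to $\{|y-x|\le N(t-s)\}$) and uses Lemma \ref{Taylor} on the bounded-velocity part of $\|f(s,y,\cdot)\|_{L^1_v}$ — produces the remaining cubic and entropy contributions. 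The initial-data remainders created by every substitution of \eqref{mildQBE} are controlled through $\|w_\be f_0\|_{L^\infty_{v,x}}\le\|w_\be f\|_{L^\infty_TL^\infty_{v,x}}$ and, after the change of variables $v\mapsto x-vt$, through $\|f_0\|_{L^1_xL^\infty_v}$, and fold into $A_0$ and the $\|w_\be f\|$--terms. Collecting all contributions gives \eqref{L1Est} with $\bar C_{\rho,N}$ and $C'_{\rho,N}$ defined as the explicit products of powers of $C_{1\rho}$, $C_{2\rho}$, $C_{5\rho}$, $C_N$ and $N$ accumulated above.

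\emph{Main difficulty.} The delicate point is the bounded-velocity, long-time region, where the $L^1_v$--norm of $f$ is traded for the entropy. One has to be careful that the change of variables $v\mapsto x-v(t-s)$ (or $\eta\mapsto x_1-\eta(s-s_1)$) confines only the velocity, not the space variable, so the spatial integral genuinely runs over (a ball inside) $\Omega$; that the kernel singularity must first be removed via $l_N$, whose sup-norm $C_N$ is polynomial in $N$ but whose dependence must be tracked through \eqref{DEFCN}; and that on the set $\{|F-\mu_{\de,\rho}|\ge\mu_{\de,\rho}\}$ the weight $w_\be(v)\bar\mu_{\de,\rho}(v)^{-1/2}$ is only bounded by an $N$--dependent constant, so that $\bar C_{\rho,N}$ and $C'_{\rho,N}$ grow with $N$. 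Consequently the smallness needed to close the scheme does not come from the entropy terms themselves but must be extracted, when Lemma \ref{L1est} is combined with Lemma \ref{globalest}, from the already-gained factors $m^{3+\ga}$, $N^{-1}$ and $\la$; keeping every bound uniform in $\de\in(0,1]$ and continuous in $\rho$, so that the final choices of $m_\rho$, $N_\rho$ and $\la_\rho$ are legitimate, is the book-keeping that requires the most care.
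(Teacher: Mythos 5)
Your overall architecture matches the paper's: integrate the mild form in $v$, treat the $K^m_\de$ piece by its $m^{3+\ga}$-smallness, split the $K^c_\de$ and $\Ga_\de$ pieces according to velocity size and a time window of length $\la$, and on the bounded-velocity, long-time region trade the $L^1_v$ mass for the entropy functional via a spatial change of variables and Lemma \ref{Taylor}. The treatments of $A_1$ and of the $|v|\ge N$ and $s\in(t-\la,t)$ portions of $A_2$, $A_3$ are exactly the paper's $H_1$, $H_{21}$--$H_{23}$, $H_{31}$--$H_{32}$.

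The genuine problem is your decision to iterate the mild form once more inside the $K^c_\de$ term. This is the step that is needed in the \emph{pointwise} Lemma \ref{globalest} (there the outer velocity $v$ is fixed, so one must iterate to create an integration variable $\eta$ that can be converted into a spatial variable), but it is precisely what the $L^1_v$ setting makes unnecessary: since you are already integrating over $v$, the single-kernel term $\int_0^{t-\la}\iint e^{-\nu_\de(v)(t-s)}\chi_{\{|v|\le N,|\eta|\le 2N\}}\,l(v,\eta)\,|f(s,x-v(t-s),\eta)|\,d\eta\,dv\,ds$ can be sent directly to the entropy by the substitution $y=x-v(t-s)$, after replacing $l$ by its truncation $\bar l_N$; this is the paper's $H_{24}$, and it is why the stated constant $\bar C_{\rho,N}$ carries single powers of $C_{2\rho}$ and $C_N$ rather than the squares your double-kernel route would produce. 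Worse, the iteration creates an iterated initial-data remainder $\int_0^t\iint e^{-\nu_\de(v)(t-s)}l(v,\eta)e^{-\nu_\de(\eta)s}|f_0(\cdot,\eta)|\,d\eta\,dv\,ds$, which, estimated through $\|w_\be f_0\|_{L^\infty_{v,x}}$ as in $J_{10}$, is of size $C\frac{C_{2\rho}}{C_{1\rho}}m^{\ga-1}\|w_\be f_0\|_{L^\infty_{v,x}}\sim m^{\ga-1}M$ with \emph{no} small prefactor ($m^{\ga-1}$ is large since $\ga<0$). No term of this type appears on the right of \eqref{L1Est}, and none can: the whole purpose of this lemma is that every contribution besides the free-transport term be small (of order $m^{3+\ga}$, $1/N$, $\la$, or a power of $\CE_{\de,\rho}(F_0)$) so that $\|f\|_{L^\infty_{T_1,T}L^\infty_xL^1_v}$ can be driven below $C_4C_{8\rho}$ in the closing argument. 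Converting that remainder to $\|f_0\|_{L^1_xL^\infty_v}$ instead does not rescue it either, since the inner time variable $s$ of the iterated term ranges down to $0$, where the Jacobian $s^{-3}$ blows up, and the singular kernel $l(v,\eta)$ prevents taking a supremum in $\eta$. So you should drop the iteration for $A_2$ and argue on the single kernel as the paper does. Your black-box use of Lemma \ref{Gaest} on the bounded region of $A_3$ is a smaller deviation: it is workable if you split the resulting $L^1_v$ factor into $|u|\le N$ (entropy) and $|u|\ge N$ (controlled by $N^{3-\be}\|w_\be f\|$), though the paper instead returns to the explicit forms $G_1,G_2$ to keep the Gaussian weight and the exact powers of $N$ claimed in \eqref{H332}.
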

\begin{proof}
Let $(t,x)\in[T_1,T]\times \Omega$, using the mild form \eqref{mildQBE}, we have
\begin{equation}\label{L1first}
%\label{ }
\int_{\R^3}\left|f(t,x,v)\right|dv\leq \int_{\R^3}e^{-\nu_\de(v)t}\left|f_0(x-vt,v)\right|dv+H_1+H_2+H_3,
\end{equation}
where
\begin{align}
&H_1:=\int_0^t\int_{\R^3} e^{-\nu_\de(v)(t-s)}\left|(K_\de^mf)(s,x-v(t-s),v)\right|dvds, \notag\\
&H_2:=\int_0^t\int_{\R^3} e^{-\nu_\de(v)(t-s)}\left|( K_\de^cf)(s,x-v(t-s),v)\right|dvds,\notag\\
&H_3:=\int_0^t\int_{\R^3} e^{-\nu_\de(v)(t-s)}\left| \Ga_\de(f)(s,x-v(t-s),v)\right|dvds.\notag
\end{align}
By Lemma \ref{KdeK0}, one gets that
\begin{align}\label{H1}
H_1&\leq Cm^{3+\ga}C_{2\rho}\| f\|_{L^\infty_TL^{\infty}_{v,x}}\int_0^t\int_{\R^3} e^{-\nu_\de(v)(t-s)}e^{-\frac{|v|^2}{20}}dvds \notag\\
&\leq Cm^{3+\ga}\frac{C_{2\rho}}{C_{1\rho}}\| f\|_{L^\infty_TL^{\infty}_{v,x}}\int_{\R^3}\frac{1}{\nu(v)}e^{-\frac{|v|^2}{20}}dv \notag\\
&\leq Cm^{3+\ga}\frac{C_{2\rho}}{C_{1\rho}}\| f\|_{L^\infty_TL^{\infty}_{v,x}}.
\end{align}
Once again using  Lemma \ref{KdeK0} and recalling the definition of $l(v,\eta)$ \eqref{lveta}, it follows that
\begin{align*}%\label{1stH2}
H_2&\leq C_{2\rho}\int_0^t\int_{\R^3} e^{-\nu_\de(v)(t-s)}\int_{\R^3}l(v,\eta)\left|f(s,x-v(t-s),\eta)\right|d\eta dvds.
\end{align*}
A similar argument as \eqref{sepJ12} shows that
\begin{align*}%\label{sepH2}
\dis H_{2}&\leq C_{2\rho}\int_0^t\int_{\R^3}\int_{\R^3} e^{-\nu_\de(v)(t-s)}\chi_{\{|v|\geq N \}}l(v,\eta)\frac{1}{w_\be(\eta)}\left|w_\be(\eta) f(s,x-v(t-s),\eta)\right|d\eta dvds\notag\\
&\ +C_{2\rho}\int_0^t\int_{\R^3}\int_{\R^3} e^{-\nu_\de(v)(t-s)}\chi_{\{|v|\leq N,|\eta|\geq 2N \}}l(v,\eta)\frac{1}{w_\be(\eta)}\left|w_\be(\eta) f(s,x-v(t-s),\eta)\right|d\eta dvds \notag\\
&\ + C_{2\rho}\int_{t-\la}^t\int_{\R^3}\int_{\R^3} e^{-\nu_\de(v)(t-s)}l(v,\eta)\frac{1}{w_\be(\eta)}\left|w_\be(\eta) f(s,x-v(t-s),\eta)\right|d\eta dvds\notag\\
&\ + C_{2\rho}\int_{0}^{t-\la}\int_{\R^3}\int_{\R^3} e^{-\nu_\de(v)(t-s)}\chi_{\{|v|\leq N,|\eta|\leq 2N \}}l(v,\eta)\frac{1}{w_\be(\eta)}\left|w_\be(\eta) f(s,x-v(t-s),\eta)\right|d\eta dvds\notag\\
&=H_{21}+H_{22}+H_{23}+H_{24}.
\end{align*}
It follows from our assumption $\be>6>3-\ga$, Lemma \ref{KmKc} and similar arguments in \eqref{J121} that
\begin{align}\label{H21}
\dis H_{21}&\leq C_{2\rho}\| w_\be f\|_{L^\infty_TL^{\infty}_{v,x}}\int_0^t\int_{\R^3}\int_{\R^3} e^{-\nu_\de(v)(t-s)}\chi_{\{|v|\geq N \}}l(v,\eta)\frac{1}{w_\be(\eta)}d\eta dvds\notag\\
&\leq C\frac{C_{2\rho}}{C_{1\rho}}\|w_\be  f\|_{L^\infty_T L^{\infty}_{v,x}}\int_{\R^3}\int_{\R^3}\chi_{\{|v|\geq N \}}\frac{1}{\nu(v)}l(v,\eta)\frac{1}{w_\be(\eta)}d\eta dv \notag\\
&\leq C\frac{C_{2\rho}}{C_{1\rho}}\|w_\be  f\|_{L^\infty_T L^{\infty}_{v,x}}\int_{\R^3}\chi_{\{|v|\geq N \}}\frac{1}{w_\be(v)\nu(v)(1+|v|)^2}dv \notag\\
&\leq \frac{C}{N}\frac{C_{2\rho}}{C_{1\rho}}\|w_\be f\|_{L^\infty_T L^{\infty}_{v,x}}.
\end{align}
In the last step above we have used the inequality
\begin{align*}
&\int_{\R^3}\chi_{\{|v|\geq N \}}\frac{1}{w_\be(v)\nu(v)(1+|v|)^2}dv\notag\\
&=\int_{\R^3}\chi_{\{|v|\geq N \}}\frac{1}{(1+|v|)^{\be+\ga+2}}dv\notag\\
&\leq \frac{C}{N}\int_{\R^3}\frac{1}{(1+|v|)^{\be+\ga}}dv\leq \frac{C}{N}.
\end{align*}
By the same approach as \eqref{J122} and \eqref{H21} we also have
\begin{align}\label{H22}
\dis H_{22}&\leq  \frac{C}{N}\frac{C_{2\rho}}{C_{1\rho}}\|w_\be f\|_{L^\infty_T L^{\infty}_{v,x}}.
\end{align}
$H_{23}$ can be treated in the same way as in \eqref{J123} that
\begin{align}\label{H23}
\dis H_{23}&\leq C_{2\rho}\|w_\be  f\|_{L^\infty_TL^{\infty}_{v,x}}\int_{t-\la}^t\int_{\R^3}\int_{\R^3} e^{-\nu_\de(v)(t-s)}l(v,\eta)\frac{1}{w_\be(\eta)}d\eta dvds \notag\\
&\leq C_{2\rho}\la\|w_\be  f\|_{L^\infty_TL^{\infty}_{v,x}}\int_{\R^3}\int_{\R^3}l(v,\eta)\frac{1}{w_\be(\eta)}d\eta dv \notag\\
&\leq C\,C_{2\rho}\la\|w_\be f\|_{L^\infty_T L^{\infty}_{v,x}}.
\end{align}
Constructing the similar approximation function $\bar{l}_N=\bar{l}_N(v,\eta)$ as in \eqref{1stJ124}, we split $H_{24}$ in the following way
\begin{align*}
\dis H_{24}&\leq C_{2\rho}\int_0^{t-\la}\int_{\R^3}\int_{\R^3} e^{-\nu_\de(v)(t-s)} \chi_{\{|v|\leq N,|\eta|\leq 2N\}}\notag\\
&\qquad\qquad\qquad\qquad\qquad\times\left|l(v,\eta)-\bar{l}_N(v,\eta)\right|\left|w_\be(\eta) f(s,x-v(t-s),\eta)\right|d\eta dv ds\notag\\
&\quad+ C_{2\rho}\int_0^{t-\la}\int_{\R^3}\int_{\R^3} e^{-\nu_\de(v)(t-s)}\chi_{\{|v|\leq N,|\eta|\leq 2N \}}\notag\\
&\qquad\qquad\qquad\qquad\qquad\times\left|\bar{l}_N(v,\eta)\right|\left|f(s,x-v(t-s),\eta)\right|d\eta dv ds\notag\\
&\leq C\frac{C_{2\rho}}{C_{1\rho}N}\|w_\be f\|_{L^\infty_T L^{\infty}_{v,x}}\notag\\
&\quad+ C\,C_{2\rho}C_N\int_0^{t-\la}\int_{\R^3}\int_{\R^3} e^{-\nu_\de(v)(t-s)}\chi_{\{|v|\leq N,|\eta|\leq 2N \}}\left| f(s,x-v(t-s),\eta)\right|d\eta dv ds.
\end{align*}
Using the entropy condition as in \eqref{UseEntropy} and \eqref{entrocontrol}, we obtain that
\begin{align}\label{H24}
\dis H_{24}&\leq C\frac{C_{2\rho}}{C_{1\rho}N}\|w_\be f\|_{L^\infty_T L^{\infty}_{v,x}}+ C\, \bar{C}_{\rho,N}\left( \la^{-\frac{3}{2}}\sqrt{\CE_{\de,\rho}(F_0)}+ \la^{-3}\CE_{\de,\rho}(F_0)\right),
\end{align}
where
\begin{align}\label{CbarrhoN}
\bar{C}_{\rho,N}=C_{2\rho}C_N\left(\frac{1}{C_{1\rho}N^\ga}+\frac{1}{\left(C_{1\rho}N^\ga\right)^{5/2}}+\frac{1}{\left(C_{1\rho}N^\ga\right)^{4}} \right).
\end{align}
Combining \eqref{H21}, \eqref{H22}, \eqref{H23} and \eqref{H24}, it holds that
\begin{align}\label{H2}
\dis H_2\leq\frac{C}{N}\frac{C_{2\rho}}{C_{1\rho}}\|w_\be f\|_{L^\infty_T L^{\infty}_{v,x}}+C\,&C_{2\rho}\la\|w_\be f\|_{L^\infty_T L^{\infty}_{v,x}}\notag\\&+C\, \bar{C}_{\rho,N}\left( \la^{-\frac{3}{2}}\sqrt{\CE_{\de,\rho}(F_0)}+ \la^{-3}\CE_{\de,\rho}(F_0)\right).
\end{align}
The last term we need to estimate is $H_3$. Rewrite $H_3$ as follows:
\begin{align}\label{splitH3}
H_3=&\int_0^t\int_{\R^3} e^{-\nu_\de(v)(t-s)}\frac{1}{w_\be(v)}\left|w_\be(v) \Ga_\de(f)(s,x-v(t-s),v)\right|dvds\notag\\
=&\int_{t-\la}^t\int_{\R^3} e^{-\nu_\de(v)(t-s)}\frac{1}{w_\be(v)}\left|w_\be(v) \Ga_\de(f)(s,x-v(t-s),v)\right|dvds\notag\\
&+\int_0^{t-\la}\chi_{\{|v|\geq N \}}\int_{\R^3} e^{-\nu_\de(v)(t-s)}\frac{1}{w_\be(v)}\left|w_\be(v) \Ga_\de(f)(s,x-v(t-s),v)\right|dvds\notag\\
&+\int_0^{t-\la}\chi_{\{|v|\leq N \}}\int_{\R^3} e^{-\nu_\de(v)(t-s)}\frac{1}{w_\be(v)}\left|w_\be(v) \Ga_\de(f)(s,x-v(t-s),v)\right|dvds\notag\\
=&H_{31}+H_{32}+H_{33}.
\end{align}
By Lemma \eqref{Gaest} and similar arguments as in \eqref{H21} and \eqref{H23}, we have
\begin{align}\label{H31H32}
\dis H_{31}+H_{32}\leq C\left(C_{5\rho}\la+\frac{1}{N}\frac{C_{5\rho}}{C_{1\rho}}\right)\left(1+\|w_\be f\|_{L^\infty_TL^{\infty}_{v,x}}\right)\|w_\be f\|_{L^\infty_TL^{\infty}_{v,x}}^2.
\end{align}

Denoting $x_1=x-v(t-s)$ and dividing $\left|w_\be(v) \Ga_\de(f)(s,x-v(t-s),v)\right|$ into two parts as in \eqref{Ga1}, we obtain
\begin{align}\label{splitGa2}
\dis \left|w_\be(v)\Ga_\de(f)(s,x-v(t-s),v)\right|dvds \leq G_1+G_2, 
\end{align}
where
\begin{align*}%\label{defG1}
\dis
G_1=&\frac{C}{\sqrt{\bar{\mu}_{\de,\rho}(v)}}\int_{\R^3}\int_{\S^2}B(v-u,\theta)\left[w_\be(v)\left|\sqrt{\bar{\mu}_{\de,\rho}}f(s,x_1,u)\sqrt{\bar{\mu}_{\de,\rho}}f(s,x_1,v)\right|\right.\notag\\
&\qquad\qquad\qquad + w_\be(v)\left|\sqrt{\bar{\mu}_{\de,\rho}}f(s,x_1,v')\sqrt{\bar{\mu}_{\de,\rho}}f(s,x_1,u)\right|\left|\mu_{\de,\rho}(v)+\mu_{\de,\rho}(u')\right|\notag\\
&\qquad\qquad\qquad+ w_\be(v)\left|\sqrt{\bar{\mu}_{\de,\rho}}f(s,x_1,u)\sqrt{\bar{\mu}_{\de,\rho}}f(s,x_1,v)\sqrt{\bar{\mu}_{\de,\rho}}f(s,x_1,u')\right|\notag\\
&\qquad\qquad\qquad+w_\be(v)\left|\sqrt{\bar{\mu}_{\de,\rho}}f(s,x_1,u)\sqrt{\bar{\mu}_{\de,\rho}}f(s,x_1,v)\sqrt{\bar{\mu}_{\de,\rho}}f(s,x_1,v')\right|\notag\\
&\qquad\qquad\qquad\left.+w_\be(v)\left|\sqrt{\bar{\mu}_{\de,\rho}}f(s,x_1,u')\sqrt{\bar{\mu}_{\de,\rho}}f(s,x_1,v')\sqrt{\bar{\mu}_{\de,\rho}}f(s,x_1,u)\right|\right]d\om du,
\end{align*} 
and
\begin{align*}%\label{defG2}
\dis
 G_2=&\frac{C}{\sqrt{\bar{\mu}_{\de,\rho}(v)}}\int_{\R^3}\int_{\S^2}B(v-u,\theta)\left[w_\be(v)\left|\sqrt{\bar{\mu}_{\de,\rho}}f(s,x_1,u')\sqrt{\bar{\mu}_{\de,\rho}}f(s,x_1,v')\right|\right.\notag\\
&\qquad\qquad \qquad + w_\be(v)\left|\sqrt{\bar{\mu}_{\de,\rho}}f(s,x_1,v')\sqrt{\bar{\mu}_{\de,\rho}}f(s,x_1,v)\right|\left|\mu_{\de,\rho}(u)+\mu_{\de,\rho}(u')\right|\notag\\
&\qquad \qquad \qquad \left.+w_\be(v)\left|\sqrt{\bar{\mu}_{\de,\rho}}f(s,x_1,u')\sqrt{\bar{\mu}_{\de,\rho}}f(s,x_1,v')\sqrt{\bar{\mu}_{\de,\rho}}f(s,x_1,v)\right|\right]d\om du.
\end{align*} 
Recalling from \eqref{C5rho} that $C_{5\rho}=C_{2\rho}+C_{3\rho}+C_{4\rho}$, similar arguments as in \eqref{1stJ21} and \eqref{1stJ22} show that
\begin{align}\label{1stG1}
\dis
& G_1\leq C\,C_{5\rho}\nu(v)\left(1+\|w_\be f\|_{L^\infty_TL^{\infty}_{v,x}}\right)\int_{\R^3}\int_{\S^2}B(v-u,\theta)e^{-\frac{|u|^2}{4}}\left|f(s,x_1,u)\right|d\om du,
\end{align} 
and
\begin{align}\label{1stG2}
\dis
& G_2 \leq C\,C_{5\rho}\nu(v)\left(1+\|w_\be f\|_{L^\infty_TL^{\infty}_{v,x}}\right)\int_{\R^3}\int_{\S^2}B(v-u,\theta)e^{-\frac{|u|^2}{4}}\left|f(s,x_1,v')\right|d\om du.
\end{align} 
Notice that when we estimate $G_1$, we do not write it as in \eqref{J22}. This is because the variable $u$ in \eqref{J22} is not the original velocity $u$ defined by \eqref{velocity}. We first consider the case when $u\geq N$ for the variable $u$ in \eqref{1stG1} and \eqref{1stG2}. Then we have
\begin{align}\label{2ndG1}
\int_{u\geq N}\left|f(s,x_1,u)\right|du=\int_{u\geq N}\frac{1}{{w_{\be/2}} (u)}\left|w_{\be/2}(u) f(s,x_1,u)\right|du,
\end{align}
and
\begin{align}\label{2ndG2}
\int_{u\geq N}\int_{\S^2}e^{-\frac{|u|^2}{4}}\left|f(s,x_1,v')\right|^pd\om du=\int_{u\geq N}\int_{\S^2}e^{-\frac{|u|^2}{8}}e^{-\frac{|u|^2}{8}}\left|f(s,x_1,v')\right|^pd\om du.
\end{align}
By the discussion above, we take the sum of $G_1$ and $G_2$ and use similar arguments in \eqref{2ndJ21}, \eqref{2ndJ22} and \eqref{3rdJ22} to obtain
\begin{align*}%\label{1stH331332}
\dis
 G_1+G_2\leq& C\frac{C_{5\rho}}{N}\nu(v)\left(1+\|w_\be f\|_{L^\infty_TL^{\infty}_{v,x}}\right)\|w_\be f\|^{\frac{2p-1}{p}}_{L^\infty_TL^{\infty}_{v,x}}\left(\int_{\R^3}\left|w_{\be/2}(u)f(s,x_1,u)\right|du\right)^{\frac{1}{p}}\notag\\
&+C\,C_{5\rho}\nu(v)\left(1+\|w_\be f\|_{L^\infty_TL^{\infty}_{v,x}}\right)\int_{u\leq N}\int_{\S^2}B(v-u,\theta)e^{-\frac{|u|^2}{4}}\left|f(s,x_1,u)\right|d\om du\notag\\
&+C\frac{C_{5\rho}}{N}\nu(v)\left(1+\|w_\be f\|_{L^\infty_TL^{\infty}_{v,x}}\right)\|w_\be f\|^{\frac{10p-1}{5p}}_{L^\infty_TL^{\infty}_{v,x}}\left(\int_{\R^3}\left|f(s,x_1,u)\right|du\right)^{\frac{1}{5p}}\notag\\
&+C\,C_{5\rho}\nu(v)\left(1+\|w_\be f\|_{L^\infty_TL^{\infty}_{v,x}}\right)\int_{u\leq N}\int_{\S^2}B(v-u,\theta)e^{-\frac{|u|^2}{4}}\left|f(s,x_1,v')\right|d\om du.
\end{align*} 
On the right-hand sides of \eqref{2ndG1} and \eqref{2ndG2}, we have either $\frac{1}{w_{\be/2} (u)}\leq \frac{C}{N}$ or  $e^{-\frac{|u|^2}{8}}\leq \frac{C}{N^p}$. Using the argument in \eqref{3rdJ22}, we conclude that 
\begin{align}\label{2ndG3132}
\dis
G_1+G_2&\leq C\frac{C_{5\rho}}{N}\nu(v)\left(1+\|w_\be f\|_{L^\infty_TL^{\infty}_{v,x}}\right)\|w_\be f\|^{2}_{L^\infty_TL^{\infty}_{v,x}}\notag\\
&+C\,C_{5\rho}\nu(v)\left(1+\|w_\be f\|_{L^\infty_TL^{\infty}_{v,x}}\right)\int_{u\leq N}\int_{\S^2}B(v-u,\theta)e^{-\frac{|u|^2}{4}}\left|f(s,x_1,u)\right|d\om du\notag\\
&+C\,C_{5\rho}\nu(v)\left(1+\|w_\be f\|_{L^\infty_TL^{\infty}_{v,x}}\right)\int_{u\leq N}\int_{\S^2}B(v-u,\theta)e^{-\frac{|u|^2}{4}}\left|f(s,x_1,v')\right|d\om du,
\end{align} 
by the fact that
\begin{align*}
&\int_{\R^3}\left|w_{\be/2}(u)f(t,x_1,u)\right|du=\int_{\R^3}\left|\frac{1}{w_{\be/2}(u)}w_{\be}(u)f(s,x_1,u)\right|du\notag\\
&\leq \|w_\be f\|_{L^\infty_TL^{\infty}_{v,x}}\int_{\R^3}\frac{1}{w_{\be/2}(u)}du\leq \|w_\be f\|_{L^\infty_TL^{\infty}_{v,x}},
\end{align*}
for $\be>6$. Combining \eqref{splitGa2} and \eqref{2ndG3132}, we have
\begin{align}\label{1stH33}
H_{33}&=\int_0^{t-\la}\chi_{\{|v|\leq N \}}\int_{\R^3} e^{-\nu_\de(v)(t-s)}\frac{1}{w_\be(v)}\left|w_\be(v) \Ga_\de(f)(s,x-v(t-s),v)\right|dvds\notag\\
&\leq C\frac{C_{5\rho}}{C_{1\rho}N}\left(1+\|w_\be f\|_{L^\infty_TL^{\infty}_{v,x}}\right)\|w_\be f\|^{2}_{L^\infty_TL^{\infty}_{v,x}}\notag\\
&\quad+\int_0^{t-\la}\iint_{\{|v|\leq N, |u|\leq N\}} e^{-\nu_\de(v)(t-s)}\frac{e^{-\frac{|u|^2}{4}}}{w_\be(v)}B(v-u,\theta)\left|f(s,x_1,u)\right|dudvds\notag\\
&\quad+\int_0^{t-\la}\iint_{\{|v|\leq N, |u|\leq N\}}\int_{\S^2} e^{-\nu_\de(v)(t-s)}\frac{e^{-\frac{|u|^2}{4}}}{w_\be(v)}B(v-u,\theta)\left|f(s,x_1,u')\right|dwdudvds\notag\\
&= C\frac{C_{5\rho}}{C_{1\rho}N}\left(1+\|w_\be f\|_{L^\infty_TL^{\infty}_{v,x}}\right)\|w_\be f\|^{2}_{L^\infty_TL^{\infty}_{v,x}}+H_{331}+H_{332}.
\end{align}
For $H_{331}$, we directly apply H\"older's inequality and \eqref{nuvbound} to obtain
\begin{align}\label{1stH331}
\dis H_{331}&\leq\int_0^{t-\la}\iint_{\{|v|\leq N, |u|\leq N\}} e^{-C_{1\rho}N^\ga(t-s)}\frac{e^{-\frac{|u|^2}{4}}}{w_\be(v)}B(v-u,\theta)\left|f(s,x_1,u)\right|dudvds\notag\\
&\leq \int_0^{t-\la}e^{-C_{1\rho}N^\ga(t-s)}\left(\iint_{\{|v|\leq N, |u|\leq N\}}|v-u|^{\frac{\ga p}{p-1}} \frac{e^{-\frac{|u|^2}{4}}}{w_\be(v)} dudv\right)^{\frac{p-1}{p}}\notag\\
&\qquad\qquad\qquad\qquad\qquad\qquad\times \left(\iint_{\{|v|\leq N, |u|\leq N\}}\left|f(s,x_1,u)\right|^pdudv\right)^\frac{1}{p}ds\notag\\
&\leq C\int_0^{t-\la}e^{-C_{1\rho}N^\ga(t-s)}\left(\iint_{\{|v|\leq N, |u|\leq N\}}\left|f(s,x-v(t-s),u)\right|^pdudv\right)^\frac{1}{p}ds.
\end{align}
It follows from a similar argument as in \eqref{entrocontrol} that
\begin{align}\label{H331}
\dis H_{331}&\leq C\|w_\be f\|^\frac{p-1}{p}_{L^\infty_TL^{\infty}_{v,x}}\int_0^{t-\la}e^{-C_{1\rho}N^\ga(t-s)}\left(\iint_{\{|v|\leq N, |u|\leq N\}}\left|f(s,x-v(t-s),u)\right|dudv\right)^\frac{1}{p}ds\notag\\
&\leq C\|w_\be f\|^\frac{p-1}{p}_{L^\infty_TL^{\infty}_{v,x}}\left(\frac{1}{C_{1\rho}N^\ga}+\frac{1}{\left(C_{1\rho}N^\ga\right)^{5/2}}+\frac{1}{\left(C_{1\rho}N^\ga\right)^{4}} \right) \left( \la^{-\frac{3}{2}}\sqrt{\CE(F_0)}+ \la^{-3}\CE(F_0)\right)^\frac{1}{p}\notag\\
&=C\|w_\be f\|^\frac{p-1}{p}_{L^\infty_TL^{\infty}_{v,x}} C'_{\rho,N}\left( \la^{-\frac{3}{2}}\sqrt{\CE_{\de,\rho}(F_0)}+ \la^{-3}\CE_{\de,\rho}(F_0)\right)^\frac{1}{p},
\end{align}
where
\begin{align}\label{CprirhoN}
 C'_{\rho,N}=\left(\frac{1}{C_{1\rho}N^\ga}+\frac{1}{\left(C_{1\rho}N^\ga\right)^{5/2}}+\frac{1}{\left(C_{1\rho}N^\ga\right)^{4}} \right) .
\end{align}
For $H_{332}$, we first apply H\"older's inequality as \eqref{1stH331} to get
\begin{align*}%\label{1stH332}
\dis H_{332}&\leq C\int_0^{t-\la}e^{-C_{1\rho}N^\ga(t-s)}\left(\iint_{\{|v|\leq N, |u|\leq N\}}\int_{\S^2}e^{-\frac{|u|^2}{4}}\left|f(s,x-v(t-s),v')\right|^pd\om dudv\right)^\frac{1}{p}ds.
\end{align*}
Then by the same transformation as in \eqref{3rdJ22} and notice if $v\leq N$ and $u\leq N$, we have $|\eta|=|v+z_\para|\leq |u|+2|v|\leq 3N$. It holds that
\begin{align*}%\label{2ndH332}
\dis H_{332}&\leq C\int_0^{t-\la}e^{-C_{1\rho}N^\ga(t-s)}\notag\\
&\qquad\times\left(\iint_{\{|v|\leq N, |\eta|\leq 3N\}}\int_{z_\perp}e^{-\frac{|z_\perp+\eta|^2}{4}}|f(s,x-v(t-s),\eta)|^p\frac{1}{|\eta-v|^{2}} dz_\perp d\eta dv\right)^\frac{1}{p}ds\notag\\
&\leq C\int_0^{t-\la}e^{-C_{1\rho}N^\ga(t-s)}\left(\iint_{\{|v|\leq N, |\eta|\leq 3N\}}|f(s,x-v(t-s),\eta)|^p\frac{1}{|\eta-v|^{2}} d\eta dv\right)^\frac{1}{p}ds.
\end{align*}
Similar arguments as in \eqref{3rdJ22} show that
\begin{align}\label{H332}
\dis H_{332}&\leq CN^3\|w_\be f\|^\frac{5p-1}{5p}_{L^\infty_TL^{\infty}_{v,x}}\int_0^{t-\la}e^{-C_{1\rho}N^\ga(t-s)}\notag\\
&\qquad\qquad\qquad\times\left(\iint_{\{|v|\leq N, |\eta|\leq 3N\}}|f(s,x-v(t-s),\eta)|d\eta dv\right)^\frac{1}{5p}ds\notag\\
&\leq CN^3\|w_\be f\|^\frac{5p-1}{5p}_{L^\infty_TL^{\infty}_{v,x}}  C'_{\rho,N}\left( \la^{-\frac{3}{2}}\sqrt{\CE_{\de,\rho}(F_0)}+ \la^{-3}\CE_{\de,\rho}(F_0)\right)^\frac{1}{5p}.
\end{align}
By \eqref{1stH33}, \eqref{H331} and \eqref{H332} we have
\begin{align}\label{H33}
H_{33}\leq &C\frac{C_{5\rho}}{C_{1\rho}N}\left(1+\|w_\be f\|_{L^\infty_TL^{\infty}_{v,x}}\right)\|w_\be f\|^{2}_{L^\infty_TL^{\infty}_{v,x}}\notag\\
&+C\|w_\be f\|^\frac{p-1}{p}_{L^\infty_TL^{\infty}_{v,x}} \bar{C}_{\rho,N}\left( \la^{-\frac{3}{2}}\sqrt{\CE_{\de,\rho}(F_0)}+ \la^{-3}\CE_{\de,\rho}(F_0)\right)^\frac{1}{p}\notag\\
&+CN^3\|w_\be f\|^\frac{5p-1}{5p}_{L^\infty_TL^{\infty}_{v,x}}  C'_{\rho,N}\left( \la^{-\frac{3}{2}}\sqrt{\CE_{\de,\rho}(F_0)}+ \la^{-3}\CE_{\de,\rho}(F_0)\right)^\frac{1}{5p}.
\end{align}
Then it holds from \eqref{splitH3}, \eqref{H31H32} and \eqref{H33} that
\begin{align}\label{H3}
\dis H_3\leq&C\left(C_{5\rho}\la+\frac{1}{N}\frac{C_{5\rho}}{C_{1\rho}}\right)\left(1+\|w_\be f\|_{L^\infty_TL^{\infty}_{v,x}}\right)\|w_\be f\|_{L^\infty_TL^{\infty}_{v,x}}^2\notag\\
&+C\frac{C_{5\rho}}{C_{1\rho}N}\left(1+\|w_\be f\|_{L^\infty_TL^{\infty}_{v,x}}\right)\|w_\be f\|^{2}_{L^\infty_TL^{\infty}_{v,x}}\notag\\
&+C\|w_\be f\|^\frac{p-1}{p}_{L^\infty_TL^{\infty}_{v,x}} \bar{C}_{\rho,N}\left( \la^{-\frac{3}{2}}\sqrt{\CE_{\de,\rho}(F_0)}+ \la^{-3}\CE_{\de,\rho}(F_0)\right)^\frac{1}{p}\notag\\
&+CN^3\|w_\be f\|^\frac{5p-1}{5p}_{L^\infty_TL^{\infty}_{v,x}}  C'_{\rho,N}\left( \la^{-\frac{3}{2}}\sqrt{\CE_{\de,\rho}(F_0)}+ \la^{-3}\CE_{\de,\rho}(F_0)\right)^\frac{1}{5p}.
\end{align}
Hence, \eqref{L1Est} follows from \eqref{L1first}, \eqref{H1}, \eqref{H2} and \eqref{H3}.
\end{proof}
With the preparation above, we can prove Theorem \ref{global}. From Lemma \ref{globalest}, we first assume
\begin{align*}\left\|w_\be f \right\|_{{L^{\infty}_{T}L^{\infty}_{v,x}}}\leq 2A_0,
\end{align*}
where
\begin{align*}
A_0&=3C_3 C_{6\rho}M^3+C_3 C_{\rho,N_\rho}\left( \la_\rho^{-\frac{3}{2}}\sqrt{\CE_{\de,\rho}(F_0)}+ \la_\rho^{-3}\CE_{\de,\rho}(F_0)\right).
\end{align*}
Then by \eqref{lemmaglobal} we have
\begin{align}\label{apriori1}
&\dis \|w_\be f\|_{L^\infty_T L^{\infty}_{v,x}}\leq A_0\notag\\
&\qquad\qquad+ C_3 C_{6\rho}\left(1+2A_0\right)\left\{(2A_0)^{\frac{2p-1}{p}}\| f\|_{L^\infty_{T_1,T}L^{\infty}_{x}L^{1}_{v}}^{\frac{1}{p}}+(2A_0)^{\frac{10p-1}{5p}}\| f\|_{{L^\infty_{T_1,T}}L^{\infty}_{x}L^{1}_{v}}^{\frac{1}{5p}}\right\}.
\end{align}
A direct calculation shows that
\begin{align*}
A_0&\leq 3C_3 C_{7\rho}M^3+C_3 C_{7\rho}\left( \sqrt{\CE_{\de,\rho}(F_0)}+ \CE_{\de,\rho}(F_0)\right),
\end{align*}
where 
\begin{align*}
C_{7\rho}=C_{6\rho}+C_{\rho,N_\rho}\left( \la_\rho^{-\frac{3}{2}}+ \la_\rho^{-3}\right).
\end{align*}
We first require 
\begin{align*}%\label{req1}
\CE_{\de,\rho}(F_0)\leq M^3
\end{align*}
to get
\begin{align}\label{close3}
A_0&\leq 5C_3 C_{7\rho}M^3.
\end{align}
Hence, we see that $\bar{C}_{*\rho}$ in \eqref{GE} should be defined by
\begin{align}\label{barC*rho}
\bar{C}_{*\rho}=C_{7\rho}.
\end{align}
If we can choose $\| f\|_{{L^\infty_{T_1,T}}L^{\infty}_{x}L^{1}_{v}}$ so small such that
\begin{align}\label{req2}
\| f\|_{{L^\infty_{T_1,T}}L^{\infty}_{x}L^{1}_{v}}\leq& \min\left\{ \left(\frac{3C_{6\rho}}{5C_{7\rho}}\frac{1}{4C_3C_{6\rho}2^\frac{2p-1}{p}}\frac{1}{(1+10C_3C_{7\rho}M^3)(5C_3C_{7\rho}M^3)^\frac{p-1}{p}}\right)^p, \right.\notag\\
&\qquad\qquad\left.   \left(\frac{3C_{6\rho}}{5C_{7\rho}}\frac{1}{4C_3C_{6\rho}2^\frac{5p-1}{5p}}\frac{1}{(1+10C_3C_{7\rho}M^3)(5C_3C_{7\rho}M^3)^\frac{5p-1}{5p}}\right)^{5p}      \right\},
\end{align}
we then have from \eqref{close3} that
\begin{align}\label{close1}
&C_3 C_{6\rho}\left(1+2A_0\right)(2A_0)^{\frac{2p-1}{p}}\| f\|_{L^\infty_{T_1,T}L^{\infty}_{x}L^{1}_{v}}^{\frac{1}{p}}\notag\\
&\leq \frac{3C_{6\rho}}{5C_{7\rho}}\frac{C_3 C_{6\rho}\left(1+10C_3C_{7\rho}M^3\right)(10C_3C_{7\rho}M^3)^{\frac{2p-1}{p}}}{4C_3C_{6\rho}2^\frac{2p-1}{p}(1+10C_3C_{7\rho}M^3)(5C_3C_{7\rho}M^3)^\frac{p-1}{p}}\notag\\
&\leq\frac{3}{4}C_3C_{6\rho}M^3\leq\frac{1}{4}A_0.
\end{align}
Similarly as above, we can prove that
\begin{align}\label{close2}
&C_3 C_{6\rho}\left(1+2A_0\right)(2A_0)^{\frac{10p-1}{5p}}\| f\|_{{L^\infty_{T_1,T}}L^{\infty}_{x}L^{1}_{v}}^{\frac{1}{5p}}\leq \frac{1}{4}A_0.
\end{align}
We can close our a priori assumption by \eqref{apriori1}, \eqref{close1} and \eqref{close2} that
\begin{align}\label{finalclose}
&\dis \|w_\be f\|_{L^\infty_T L^{\infty}_{v,x}}\leq A_0+\frac{A_0}{4}+\frac{A_0}{4}\leq \frac{3A_0}{2}.
\end{align}
We first relax the requirement \eqref{req2} to be
\begin{align}\label{req3}
\| f\|_{{L^\infty_{T_1,T}}L^{\infty}_{x}L^{1}_{v}}\leq& C_4 C_{8\rho},
\end{align}
where $C_4=C_4(\be,\ga,M)$ depends only on $\be$, $\ga$ and $M$, and
\begin{align*}%\label{C9rho}
C_{8\rho}=\frac{1}{C_{7\rho}^{2p-1}(1+C_{7\rho}^p)+C_{7\rho}^{10p-1}(1+C_{7\rho}^{5p})}.
\end{align*}
Then we need to prove that we can actually let  $\| f\|_{{L^\infty_{T_1,T}}L^{\infty}_{x}L^{1}_{v}}$ meet the requirement \eqref{req3}. From \eqref{L1Est} in Lemma \ref{L1est} and \eqref{close3} we have

\begin{align*}
\int_{\R^3}\left|f(t,x,v)\right|dv\leq &\int_{\R^3}e^{-\nu_\de(v)t}\left|f_0(x-vt,v)\right|dv+C\left(m^{3+\ga}\frac{C_{2\rho}}{C_{1\rho}}+\frac{1}{N}\frac{C_{2\rho}}{C_{1\rho}}+C_{2\rho}\la\right)5C_3 C_{7\rho}M^3\notag\\
&+C\left(C_{5\rho}\la+\frac{1}{N}\frac{C_{5\rho}}{C_{1\rho}}\right)\left(1+5C_3 C_{7\rho}M^3\right)(5C_3 C_{7\rho}M^3)^2\notag\\
&+C(5C_3 C_{7\rho}M^3)^\frac{p-1}{p} C'_{\rho,N}\left( \la^{-\frac{3}{2}}\sqrt{\CE_{\de,\rho}(F_0)}+ \la^{-3}\CE_{\de,\rho}(F_0)\right)^\frac{1}{p}\notag\\
&+CN^3(5C_3 C_{7\rho}M^3)^\frac{5p-1}{5p}  C'_{\rho,N}\left( \la^{-\frac{3}{2}}\sqrt{\CE_{\de,\rho}(F_0)}+ \la^{-3}\CE_{\de,\rho}(F_0)\right)^\frac{1}{5p}\notag\\
&+C\, \bar{C}_{\rho,N}\left( \la^{-\frac{3}{2}}\sqrt{\CE_{\de,\rho}(F_0)}+ \la^{-3}\CE_{\de,\rho}(F_0)\right).
\end{align*}
Similarly as how we define $m_\rho$, $N_\rho$, we define
\begin{align*}%\label{defcons}
\bar{m}_\rho=&\ep_3\left(\frac{C_{8\rho}C_{1\rho}}{C_{2\rho}C_{7\rho}}\right)^{\frac{1}{3+\ga}},\notag\\
\bar{\la}_\rho=&\ep_3\min\{\frac{C_{8\rho}}{C_{2\rho}C_{7\rho}},\frac{C_{8\rho}}{C_{5\rho}C^2_{7\rho}},\frac{C_{8\rho}}{C_{5\rho}C^3_{7\rho}}\},\notag\\
\bar{N}_\rho=&\frac{1}{\ep_3}\max\{ \frac{C_{2\rho} C_{7\rho}}{C_{8\rho}C_{1\rho}}, \frac{C_{5\rho} C^3_{7\rho}}{C_{8\rho}C_{1\rho}}, \frac{C_{5\rho} C^2_{7\rho}}{C_{8\rho}C_{1\rho}} \},
\end{align*}
where $\ep_3=\ep_3(\be,\ga,M)$ is a small constant, such that
\begin{align*}
\int_{\R^3}\left|f(t,x,v)\right|dv\leq &\int_{\R^3}\left|f_0(x-vt,v)\right|dv\notag\\
&+C(5C_3 C_{7\rho}M^3)^\frac{p-1}{p} C'_{\rho,\bar{N}_\rho}\left( \bar{\la}_\rho^{-\frac{3}{2}}\sqrt{\CE_{\de,\rho}(F_0)}+ \bar{\la}_\rho^{-3}\CE_{\de,\rho}(F_0)\right)^\frac{1}{p}\notag\\
&+C\bar{N}_\rho^3(5C_3 C_{7\rho}M^3)^\frac{5p-1}{5p}  C'_{\rho,\bar{N}_\rho}\left( \bar{\la}_\rho^{-\frac{3}{2}}\sqrt{\CE_{\de,\rho}(F_0)}+ \bar{\la}_\rho^{-3}\CE_{\de,\rho}(F_0)\right)^\frac{1}{5p}\notag\\
&+C\, \bar{C}_{\rho,\bar{N}_\rho}\left( \bar{\la}_\rho^{-\frac{3}{2}}\sqrt{\CE_{\de,\rho}(F_0)}+ \bar{\la}_\rho^{-3}\CE_{\de,\rho}(F_0)\right)+\frac{1}{4}C_4 C_{8\rho}.
\end{align*}
We can see if we define
\begin{align*}%\label{C10rho}
C_{9\rho}=\min&\left\{1,\frac{C_{8\rho}^{2p}}{\left(C_{7\rho}\right)^{2(p-1)}\left(C'_{\rho,\bar{N}_\rho}\right)^{2p} \left( \bar{\la}_\rho^{-\frac{3}{2}}+ \bar{\la}_\rho^{-3}\right)^2} ,\right.\notag\\
& \qquad\qquad \left. \frac{C_{8\rho}^{10p}}{\left(C_{7\rho}\right)^{2(5p-1)}\left(C'_{\rho,\bar{N}_\rho}\right)^{10p} \left( \bar{\la}_\rho^{-\frac{3}{2}}+ \bar{\la}_\rho^{-3}\right)^2} ,\left(\frac{C_{8\rho}}{\bar{C}_{\rho,\bar{N}_\rho}\left( \bar{\la}_\rho^{-\frac{3}{2}}+ \bar{\la}_\rho^{-3}\right)} \right)^2        \right\},
\end{align*}
then there exists $\ep_4=\ep_4(\be,\ga,M)$ such that if
\begin{align*}%\label{req4}
\CE_{\de,\rho}(F_0)\leq \ep_4 C_{9\rho},
\end{align*}
we have 
\begin{align}\label{bridgeapriori}
\int_{\R^3}\left|f(t,x,v)\right|dv\leq &\int_{\R^3}\left|f_0(x-vt,v)\right|dv+\frac{C_4C_{8\rho}}{2}.
\end{align}
Recall from Theorem \ref{local} that $T_1=\frac{C_1}{C_{*\rho}\left(1+\|w_\be f_0\|_{L^{\infty}_{v,x}}+\|w_\be f_0\|^2_{L^{\infty}_{v,x}}\right)}>\frac{C}{C^3_{*\rho}M^2}$. We consider the case that $t\geq T_1$.
If $\Omega=\R^3$,
 \begin{align*}
\int_{\R^3}\left|f_0(x-vt,v)\right|dv \leq \frac{1}{T_1^3}\|f_0\|_{L^1_xL^\infty_v}\leq \frac{C}{C^3_{*\rho}}M^6\|f_0\|_{L^1_xL^\infty_v}.
\end{align*}
Then we choose some small $\ep_5(\be,\ga,M)$ such that
\begin{align*}
\|f_0\|_{L^1_xL^\infty_v}\leq\ep_5(\be,\ga,M)C^3_{*\rho}C_{8\rho}
\end{align*}
 and $\int_{\R^3}\left|f_0(x-vt,v)\right|dv \leq \frac{1}{2}C_4C_{8\rho}$. Then together with  \eqref{bridgeapriori}, \eqref{req3} is satisfied. Hence, \eqref{close2} holds true and we close the a priori estimate by \eqref{finalclose}.  Finally, we let $\ep=\min\{\ep_4,\ep_5  \}$ and
\begin{align}\label{R3barMrho}
\bar{M}_\rho=\min\{1,C_{9\rho},C_{*\rho}C_{8\rho} \}
\end{align}
to obtain \eqref{smallness}, \eqref{GE}.
If $\Omega=\T^3$, by $\int_{\{|v|\leq M_1\}}\left|f_0(x-vt,v)\right|dv\leq C\frac{(1+M_1t)^3}{t^3}\int_\Omega\|f_0(y)\|_{L^\infty_v}dy$, we obtain
 \begin{align*}
\dis \int_{\R^3}\left|f_0(x-vt,v)\right|dv &\leq  \int_{\{|v|\geq M_1\}}\left|f_0(x-vt,v)\right|dv+\int_{\{|v|\leq M_1\}}\left|f_0(x-vt,v)\right|dv\notag\\
&\leq \int_{\{|v|\geq M_1\}}\left|f_0(x-vt,v)\right|dv+C\left\{ M_1^3\|f_0\|_{L^1_xL^\infty_v} +M^6\|f_0\|_{L^1_xL^\infty_v}\right\} \notag\\
&\leq  M_1^{3-\be} \left\| w_\be f_0\right\|_{L^{\infty}_{v,x}}+CM_1^3\|f_0\|_{L^1_xL^\infty_v} +CM^6\|f_0\|_{L^1_xL^\infty_v}.
\end{align*}
By choosing $M_1=\left( \frac{\left\| w_\be f_0\right\|_{L^{\infty}_{v,x}}}{ \|f_0\|_{L^1_xL^\infty_v}} \right)^\frac{1}{\be}$, we have

 \begin{align*}
\dis\int_{\R^3}\left|f_0(x-vt,v)\right|dv &\leq  C\left\| w_\be f_0\right\|^{\frac{3}{\be}}_{L^{\infty}_{v,x}} \|f_0\|^{1-\frac{3}{\be}}_{L^1_xL^\infty_v}+\frac{C}{C^3_{*\rho}M^2}M^6\|f_0\|_{L^1_xL^\infty_v}\notag\\
&\leq  CM^{\frac{3}{\be}} \|f_0\|^{1-\frac{3}{\be}}_{L^1_xL^\infty_v}+\frac{C}{C^3_{*\rho}M^2}M^6\|f_0\|_{L^1_xL^\infty_v}.
\end{align*}
Then similarly as in \eqref{R3barMrho}, we define
\begin{align}\label{T3barMrho}
\bar{M}_\rho=\min\{1,C_{9\rho},C_{*\rho}C_{8\rho},C_{8\rho}^{\frac{3}{\be-3}} \}
\end{align}
to obtain \eqref{smallness} and \eqref{GE}. Therefore we finish the proof of Theorem \ref{global}.
\qed

\medskip
\noindent {\bf Acknowledgments:}\,
This work is supported by the Hong Kong PhD Fellowship Scheme. The author would like to thank Professor Renjun Duan for his support during the PhD studies in the Chinese University of Hong Kong and also thank Professor Ning Jiang for suggesting this problem in a seminar that was held at CUHK in September 2021.

\end{document}